 \providecommand\@dotsep{5}
 \def\listtodoname{List of Todos}
 \def\listoftodos{\@starttoc{tdo}\listtodoname}
\def\bC{{\mathbb{C}}}
\def\bP{{\mathbb{P}}}
\def\bQ{{\mathbb{Q}}}
\newcommand{\sP}{\mathsf{P}}
\newcommand{\sB}{\mathsf{B}}
\newcommand{\sH}{\mathsf{H}}
\newcommand{\aR}{\mathcal{R}}
\newcommand{\aD}{\mathcal{D}}
\newcommand{\N}{\mathbb{N}}
\newcommand{\Z}{\mathbb{Z}}
\newcommand{\R}{\mathbb{R}}
\renewcommand{\P}{\mathbb{P}}
\newcommand{\sm}{\setminus}
\newcommand{\F}{\mathcal F}
\newcommand{\LL}{\mathcal L}
\newcommand{\J}{\mathcal J}
\newcommand{\K}{\mathcal K}
\newcommand{\T}{\mathcal T}
\newcommand{\SH}{\mathrm{SH}}
\DeclareMathOperator{\PGL}{PGL}
\DeclareMathOperator{\Crit}{Crit}
\DeclareMathOperator{\Hull}{Hull}
\DeclareMathOperator{\diam}{diam}
\DeclareMathOperator{\Aut}{Aut}
\newcommand{\mynewtheorem}[3]{
\expandafter\def\csname#1autorefname\endcsname{#3}
\newaliascnt{#1}{thm}
\newtheorem{#1}[#1]{#2}
\aliascntresetthe{#1}
%
}
\theoremstyle{plain}
\newtheorem{thm}{Theorem}[section]
\newtheorem{prop}[thm]{Proposition}
\newtheorem{lem}[thm]{Lemma}
\newtheorem{cor}[thm]{Corollary}
\theoremstyle{definition}
\newtheorem{defn}[thm]{Definition}
\newtheorem{rmk}[thm]{Remark}
\newcommand{\norm}[2][\@nil]{%
  \def\tmp{#1}%
   \ifx\tmp\@nnil
       \left\|#2\right\|
    \else
         \left\|#2\right\|_{#1}
    \fi}
\newcommand{\set}[2][\@nil]{%
  \def\tmp{#1}%
   \ifx\tmp\@nnil
       \left\{#2\right\}
    \else
         \left\{#1:\, #2\right\}
    \fi}
\numberwithin{equation}{section}
\title{Berkovich dynamics of twisted rational maps}
\author{Hongming Nie}
\address{Institute for Mathematical Sciences, Stony Brook University, Stony Brook NY 11794, USA}
\email{hongming.nie@stonybrook.edu}
\author{Shengyuan Zhao}
\address{Universit\'e Paul Sabatier, Institut de Mathématiques de Toulouse, 118, route de Narbonne, F-31062 Toulouse, France}
\email{shengyuan.zhao@math.univ-toulouse.fr}
\begin{document}

\begin{abstract}
A twisted rational map over a non-archimedean field $K$ is the composition of a rational map over $K$ and a continuous automorphism of $K$. We explore the dynamics of some twisted rational maps on the Berkovich projective line. 
   
\end{abstract}

\maketitle

\tableofcontents

\section{Introduction}\label{sec:intro} 

Let $K$ be an algebraically closed field of characteristic $0$, complete with respect to a non-trivial and non-archimedean absolute value $|\cdot|$. Let $\tau$ be a continuous field automorphism of $K$ such that
\begin{align}\label{eq:fieldautomorphismcondition} 
|\tau(\cdot)|=|\cdot|^{\lambda_\tau} 
\end{align}
for some positive real constant $\lambda_\tau$. In the case where $K$ is the completion of the field of Puiseux series over a field of characteristic $0$, the constant $\lambda_\tau$ can take any positive rational value and those $\tau$ with $\lambda_\tau=1$ form an infinite non-abelian group (see Deschamps \cite{Deschamps05} for detailed descriptions). In the case where $K=\bC_p$, any element in $\mathrm{Gal}(\overline{\bQ}_p/\bQ_p)$ satisfies \eqref{eq:fieldautomorphismcondition}  with  $\lambda_\tau=1$ (see \cite{KConrad} and MacLane \cite{MacLane39}).

Let $\phi(z)=\frac{\sum_{k=0}^m a_k z^k}{\sum_{k=0}^n b_k z^k}\in K(z)$ be a rational function. We consider the following map denoted by $\phi_\tau$:
\[
\phi_\tau: z\mapsto \frac{\sum_{k=0}^m a_k \tau(z)^k}{\sum_{k=0}^n b_k \tau(z)^k}.
\]
Such a map is the composition of the rational map $\phi$ and the field automorphism $\tau$; we call it a {\it $\tau$-twisted rational map} or simply a {\it twisted rational map}. A twisted rational map does not act on the projective line as an endomorphism of $K$-algebraic variety; in algebro-geometric language it fits the following commutative diagram of morphisms of schemes:
\[
\begin{tikzcd}
\bP^1_{K} \arrow{r}{\phi_\tau} \arrow{d}{} & \bP^1_{K}\arrow{d}{}\\
\operatorname{Spec}(K) \arrow{r}{\tau^*}& \operatorname{Spec}(K).
\end{tikzcd}
\]
A twisted rational map acts continuously with respect to the non-archimedean topology on the set of rational points $\bP^1(K)$. One can identify $\bP^1(K)$ with a dense subset of the Berkovich projective line $\mathsf{P}^1(K)$ and extend the action continuously to $\mathsf{P}^1(K)$. Such twisted actions on $\mathsf{P}^1(K)$ already appeared in a paper of Rumely (see \cite[Proposition 6.8]{Rumely17}). 

Dynamics of non-archimedean rational maps 
have been extensively studied since the work of 
 Benedetto \cite{Benedetto98} and  Rivera-Letelier \cite{Rivera03II} in analogy to classical dynamics of complex rational maps acting on $\P^1(\bC)$. The analogue of our twisted rational map in the complex setting is anti-holomorphic rational map of the form $z\mapsto \frac{\sum_{k=0}^m a_k \bar{z}^k}{\sum_{k=0}^n b_k \bar{z}^k}$ where $\bar{z}$ denotes the complex conjugation. As complex conjugation is a field automorphism of $\bC$ of order two, an anti-holomorphic rational map becomes a rational map after one iteration and asymptotic properties of anti-holomorphic rational maps reduce to that of rational maps. However a non-archimedean field $K$ could have natural field automorphism $\tau$ satisfying our condition \eqref{eq:fieldautomorphismcondition} and of infinite order. Therefore there are twisted rational maps whose iterates are never rational (see Remark \ref{rmk:not}). 

In this paper we explore the Fatou-Julia theory of twisted rational maps on $\mathsf{P}^1(K)$. Despite some fundamental differences between rational maps and twisted rational maps, the Fatou-Julia theories of these two classes of maps share many basic features. 
We will simply write $\bP^1$ and $\sP^1$ instead of $\bP^1(K)$
and $\sP^1(K)$, 
 unless specified otherwise.

We denote by $\F(\phi_\tau)$  and $\J(\phi_\tau)$ the Fatou set and the Julia set of $\phi_\tau$ in $\mathsf{P}^1$ and by $\F_I(\phi_\tau)$ and $\J_I(\phi_\tau)$ 
the Fatou set and the Julia set in $\bP^1$ (see Section \ref{prelim} for definitions). 

\begin{thm}\label{thm:F-J}
Let $\phi_\tau$ be a twisted rational map with $\deg\phi\ge 1$. Then 
$$\F_I(\phi_\tau)\subseteq \F(\phi_\tau)\cap\P^1\ \ \text{and}\ \ \J_I(\phi_\tau)\supseteq\J(\phi_\tau)\cap\P^1.$$
Moreover, if $\lambda_\tau\ge 1$, then 
$$\F_I(\phi_\tau)= \F(\phi_\tau)\cap\P^1\ \ \text{and}\ \ \J_I(\phi_\tau)=\J(\phi_\tau)\cap\P^1.$$
\end{thm}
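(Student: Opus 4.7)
The plan is to compare, for $x \in \P^1$, the classical equicontinuity/normality condition defining $\F_I(\phi_\tau)$ (with the chordal metric on $\P^1$) with the Berkovich version defining $\F(\phi_\tau)$ on $\sP^1$, using two facts: (i) $\P^1$ is dense in $\sP^1$ and the Berkovich chordal metric extends the classical one; (ii) $\phi_\tau$ extends continuously to $\sP^1$ and acts on Berkovich disks via the usual formula for $\phi$ combined with the twist $\tau$, which sends a classical disk of radius $r$ to a disk of radius $r^{\lambda_\tau}$.

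For the inclusion $\F_I(\phi_\tau) \subseteq \F(\phi_\tau) \cap \P^1$, I would take $x \in \F_I(\phi_\tau)$ and a classical open disk $D \subseteq \P^1$ around $x$ on which $\{\phi_\tau^n\}_{n \ge 1}$ is uniformly equicontinuous. Letting $\cD \subseteq \sP^1$ denote the Berkovich open disk with $\cD \cap \P^1 = D$, the set $D$ is dense in $\cD$. Since every $\phi_\tau^n$ extends continuously to $\sP^1$, the uniform bound on $\phi_\tau^n(D)$ in the classical chordal diameter propagates, by density and the compatibility of the two metrics, to a uniform bound on $\phi_\tau^n(\cD)$ in the Berkovich chordal diameter. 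Thus $x \in \F(\phi_\tau)$, and complementing in $\P^1$ yields the corresponding Julia-set inclusion $\J_I(\phi_\tau) \supseteq \J(\phi_\tau) \cap \P^1$.

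For the reverse inclusion under $\lambda_\tau \ge 1$, I would start with $x \in \F(\phi_\tau) \cap \P^1$ and a Berkovich disk $\cU$ around $x$ whose iterates $\phi_\tau^n(\cU)$ are uniformly small in the Berkovich chordal diameter. The restriction $\cU \cap \P^1$ is a classical disk, and the task is to translate the Berkovich diameter bound into a classical diameter bound on each $\phi_\tau^n(\cU \cap \P^1)$. Using the factorization $\phi_\tau^n = \phi \circ \phi^\tau \circ \cdots \circ \phi^{\tau^{n-1}} \circ \tau^n$, I would track how each factor rescales Berkovich and classical radii, observing that the hypothesis $\lambda_\tau \ge 1$ is precisely what is needed to keep the compounded $\lambda_\tau$-powers in the $\tau^n$-factor from expanding small classical distances. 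With this, the Berkovich bound transfers to a classical bound, giving classical equicontinuity at $x$ and hence $x \in \F_I(\phi_\tau)$. I expect this radius-comparison to be the main obstacle; the same analysis should also indicate why the hypothesis is sharp, since for $\lambda_\tau < 1$ the $\tau^n$-twist can blow up small classical disks and decouple the two Fatou sets.
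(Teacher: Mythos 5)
Your proposal contains a correct structural intuition — in particular, you correctly identify the factorization $\phi_\tau^n = (\text{rational map})\circ \tau^n$ and the role of $\lambda_\tau \ge 1$ in controlling the $\tau^n$-twist — but it has genuine gaps in both directions, stemming mainly from using a definition of the Berkovich Fatou set that does not match the one in this paper.

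The paper defines $\F(\phi_\tau)$ as the set of $\zeta\in\sP^1$ with a neighborhood $U$ such that $\bigcup_{n\ge 0}\phi_\tau^n(U)$ \emph{omits infinitely many points of $\sP^1$}. It is not defined by equicontinuity for a ``Berkovich chordal metric,'' and no such metric is introduced in the paper. This matters for the first inclusion: you take $x\in\F_I(\phi_\tau)$, get a disk $D$ on which $\{\phi_\tau^n\}$ is equicontinuous, and claim that a uniform diameter bound on the $\phi_\tau^n(D)$ propagates to the Berkovich closure and gives $x\in\F(\phi_\tau)$. But a uniform \emph{per-iterate} diameter bound does not give the needed conclusion: each $\phi_\tau^n(D)$ may be small yet the $\phi_\tau^n(D)$ may ``walk'' around $\P^1$, so the union could fail to miss infinitely many points. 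The paper's Lemma~\ref{lem:rigidF} proves the sharper statement that equicontinuity at $x$ forces the existence of a classical disk $D\ni x$ with $\P^1\setminus\bigcup_n\phi_\tau^n(D)$ infinite, via a non-trivial three-case analysis (either some iterate of $D$ is disjoint from $D$, or the orbit of $D$ stays bounded, or one derives a contradiction with equicontinuity). Your density argument never touches this, and the ``compatibility of the two metrics'' step is not available.

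For the reverse inclusion under $\lambda_\tau\ge 1$, you correctly guess the factorization $\phi_\tau^n=\tau^n\circ h_n$ with $h_n\in K(z)$ (Lemma~\ref{cor:composition}) and the H\"older estimate $\sigma(\phi_\tau(x),\phi_\tau(y))\le C\,\sigma(x,y)^{\lambda_\tau}$ (Lemma~\ref{lem:holder}), but ``tracking radii'' does not by itself produce equicontinuity. The actual proof must first exploit the geometry of $\sP^1$: since each $\phi_\tau^n(U)$ is a Berkovich disk, one picks two omitted points $x_1,x_2$, shows $[x_1,x_2]$ avoids the whole orbit of $U$, and then normalizes by a M\"obius map $M$ so that every $\phi_\tau^n(U)$ lives in a single direction at $\xi_G$. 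The crucial missing ingredient in your sketch is the non-archimedean Montel-type step: since $\tau^{-n}$ fixes $\xi_G$ and permutes directions (Lemma~\ref{lem:fund}), each $h_n(M^{-1}(U))$ also lies in one direction at $\xi_G$, whence $\{h_n\}$ is equicontinuous by \cite[Lemma~5.8]{Ben19}. Only then can the H\"older bound with $\lambda_{\tau^n}=\lambda_\tau^n\ge1$ be applied to conclude $\{\phi_\tau^n\}$ is equicontinuous. Without identifying the source of equicontinuity for $\{h_n\}$, there is nothing for the $\lambda_\tau\ge1$ estimate to act on, so your argument does not close.
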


Equidistribution of backward orbits for rational maps of degree at least $2$ has been established by Favre and Rivera-Letelier in \cite{Favre04, Favre10} 
(see also Baker and Rumely \cite[Section 10.3]{Baker10},  Chambert-Loir \cite{Chambert-Loir06}, Jonsson \cite[Section 5.7]{Jon15} and Thuillier \cite{Thuillier05}). We have the same phenomenon for 
 twisted rational maps: 
\begin{thm}\label{thm:equ}
Let $\phi_\tau$ be a twisted rational map of degree $d\ge 2$.  Assume that $d\cdot\lambda_\tau > 1$. Then there exists a unique Radon probability measure $\mu$ on $\mathsf{P}^1$ with the following property: if $\nu$ is a Radon probability measure on $\mathsf{P}^1$, then the weak convergence
\[\nu_n:=\frac{1}{d^n}(\phi_\tau^n)^\ast\nu\rightarrow \mu\quad \text{as}\ n\rightarrow \infty\]
holds if and only if $\nu(E(\phi_\tau))=0$ where $E(\phi_\tau)$ is the exceptional set of $\phi_\tau$. The measure $\mu$ does not charge any classical point and satisfies $\phi_\tau^\ast\mu=d\cdot\mu$. 
\end{thm}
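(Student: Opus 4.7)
The plan is to adapt the potential-theoretic approach of Favre and Rivera-Letelier to the twisted setting. The main new input is tracking how the field automorphism $\tau$ acts on the hyperbolic metric of $\sP^1$, and this is where the hypothesis $d\lambda_\tau>1$ enters.

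First I would set up the pullback formalism for $\phi_\tau$, both on Radon measures and on quasi-subharmonic functions. Since $\tau$ is a continuous field automorphism of $K$, it induces a homeomorphism of $\sP^1$; concretely, a Berkovich disc of radius $r$ is sent by $\tau$ to a Berkovich disc of radius $r^{\lambda_\tau}$, so $\tau$ rescales the hyperbolic metric by $\lambda_\tau$. The pullback $(\phi_\tau)^\ast\delta_x = \sum_{y\in\phi_\tau^{-1}(x)} m_{\phi_\tau}(y)\,\delta_y$ is then defined using the usual multiplicities of $\phi$ (since $\tau$ is a bijection). A key preliminary identity to check is
\[
\Delta(u\circ \phi_\tau) = (\phi_\tau)^\ast(\Delta u)
\]
for quasi-subharmonic $u$, obtained by combining the classical identity for the rational map $\phi$ with the $\tau$-action.

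Next I would construct $\mu$ via uniform convergence of potentials. Fix the Gauss point $\zeta_g$ and set $\mu_n := \frac{1}{d^n}(\phi_\tau^n)^\ast \delta_{\zeta_g}$; let $g_n$ be the normalized potential solving $\Delta g_n = \mu_n - \delta_{\zeta_g}$. The crucial estimate is the geometric contraction
\[
\|g_{n+1}-g_n\|_\infty \leq \frac{C}{(d\lambda_\tau)^n}
\]
for some constant $C$, where the factor $1/d$ comes from normalizing the degree-$d$ pullback and the factor $1/\lambda_\tau$ from the $\tau$-rescaling of the hyperbolic metric. Under $d\lambda_\tau>1$, the $g_n$ form a uniformly Cauchy sequence in $C(\sP^1)$ and converge to some $g$; the measure $\mu := \Delta g + \delta_{\zeta_g}$ is Radon and probability, and passing to the limit in $d\mu_{n+1}=(\phi_\tau)^\ast \mu_n$ yields $(\phi_\tau)^\ast\mu = d\mu$. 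Non-atomicity at classical points then follows from this functional equation combined with finiteness of the exceptional set.

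For the equidistribution, given a Radon probability measure $\nu$ with $\nu(E(\phi_\tau))=0$, I would introduce the potential $g_\nu$ of $\nu-\delta_{\zeta_g}$ and show that, up to additive constants, $\frac{1}{d^n}(\phi_\tau^n)^\ast g_\nu \to g$ in $L^1(\sP^1)$. By disintegration it suffices to treat $\nu = \delta_x$ for a classical point $x\notin E(\phi_\tau)$; the hypothesis on $x$ guarantees that the backward orbit $\bigcup_n \phi_\tau^{-n}(x)$ is infinite, and the contraction estimate above then forces the normalized potentials to converge. Conversely, mass supported on $E(\phi_\tau)$ stays trapped in the finite exceptional set and obstructs convergence to $\mu$, showing that the condition $\nu(E(\phi_\tau))=0$ is sharp, and uniqueness of $\mu$ follows since the Gauss point avoids $E(\phi_\tau)$. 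The main obstacle throughout is the contraction estimate for $\|g_{n+1}-g_n\|_\infty$, which demands a quantitative understanding of how $\tau$ moves Type II points and distorts Green functions; this is also the step where $d\lambda_\tau > 1$ plays an essential role, since below that threshold the telescoping sum no longer converges uniformly and the construction of $\mu$ as presented breaks down.
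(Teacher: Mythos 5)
Your strategy is the same as the paper's (FRL-style potential theory, following Jonsson), and most of the setup is right, but the displayed Laplacian pullback identity is misstated in a way that matters. You write $\Delta(u\circ \phi_\tau) = (\phi_\tau)^\ast(\Delta u)$, but since $\tau$ rescales the hyperbolic metric by $\lambda_\tau$, the directional derivatives of $\tau^\ast f$ pick up a factor of $\lambda_\tau$, and the correct identity (the paper's Lemma~\ref{lem:laplacianpullback}) is $\Delta(f\circ\phi_\tau) = \lambda_\tau\,\phi_\tau^\ast(\Delta f)$. This is not a cosmetic error: taken literally, your identity would yield the telescoping series $u_n = \sum_{j<n} d^{-j}\,u\circ\phi_\tau^j$, which converges uniformly at geometric rate $d^{-1}$ for any $d\ge 2$ irrespective of $\lambda_\tau$, and the hypothesis $d\lambda_\tau>1$ would never enter. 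With the corrected identity one gets $u_n = \sum_{j<n}(d\lambda_\tau)^{-j}\,u\circ\phi_\tau^j$, which is exactly why the threshold $d\lambda_\tau>1$ appears. You do state separately that the $1/\lambda_\tau$ factor ``comes from the $\tau$-rescaling of the hyperbolic metric'' when you write down the contraction estimate $\|g_{n+1}-g_n\|_\infty = O((d\lambda_\tau)^{-n})$, so you clearly have the right intuition — but that rate does not follow from the identity you wrote, and you should fix the identity to carry the $\lambda_\tau$. (For the equidistribution part you also implicitly need an analogue of the paper's Lemma~\ref{lem:distanceestimate}, $\rho(\phi_\tau^n(\zeta),\zeta_0) = O((d\lambda_\tau)^n)$, to control the pulled-back potentials; this is the other place the twist factor shows up.)
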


A rational map of degree at least two has a non empty Fatou set (see Benedetto \cite{Benedetto01}), so does a twisted rational map when $\lambda_\tau\geq 1$ (see Proposition \ref{prop:repelling}). Periodic Fatou components of rational maps are classified into two types by Rivera-Letelier \cite{Rivera03II} and Kiwi \cite[Appendix]{DeMarco16}. 
The same dichotomy holds for twisted rational maps when $\lambda_\tau=1$:
\begin{thm}\label{thm:periodic}
Let $\phi_\tau$ be a tame twisted rational map of degree $d\ge 2$. Assume that $\lambda_\tau=1$. If $U\subset\F(\phi_\tau)$ is a fixed Fatou component, then either
\begin{enumerate}
\item $U$ is an attracting domain; or 
\item $U$ is a Rivera domain with $\partial U$ consisting of at most $d-1$ type II periodic orbits.
\end{enumerate}
Either of cases (1) and (2) may occur.
\end{thm}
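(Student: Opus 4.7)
My plan is to transport the Rivera--Letelier / Kiwi classification of periodic Fatou components for tame rational maps (\cite{Rivera03II}, \cite[Appendix]{DeMarco16}) to the twisted setting, exploiting that $\lambda_\tau=1$ makes $\tau$ a $K$-isometry, hence an isometric homeomorphism of $\sP^1$ preserving type, the path metric, and the Gauss-point radius. Thus $\phi_\tau=\phi\circ\tau$ acts on $\sP^1$ as the composition of an isometric automorphism of the Berkovich tree with a tame rational map, so the local invariants that drive the classification (local degree at type II points, directional derivatives, reduction mod the maximal ideal) remain available.

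First I would analyze classical fixed points in $U$. A classical $z_0$ is fixed by $\phi_\tau$ iff $\phi(\tau(z_0))=z_0$, and writing $w=z-z_0$ the field-automorphism property of $\tau$ gives
\[
\phi_\tau(z)-z_0 \;=\; \phi'(\tau(z_0))\,\tau(w) \;+\; O(w^2),
\]
so that $|\phi'(\tau(z_0))|$ is a well-defined local multiplier (since $|\tau(w)|=|w|$). If $U$ contains a classical fixed point with multiplier strictly less than $1$, then $\phi_\tau$ is an ultrametric strict contraction on a small closed disk around $z_0$, and propagating this contraction across $U$ as in Benedetto's construction of attracting basins identifies $U$ with the immediate basin of attraction of $z_0$: case (1).

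If instead $U$ contains no classical attracting fixed point, I would argue that $\phi_\tau|_U\colon U\to U$ is a bijection, which is the defining feature of a Rivera domain. The argument combines normality on $U$, tameness (no wild ramification), and Theorem~\ref{thm:equ}: any critical point of $\phi_\tau$ trapped in $U$ would produce a forward orbit either converging to an attracting classical fixed point (excluded by assumption) or forcing backward-orbit accumulation inside $U$, contradicting the fact that the equilibrium measure $\mu$ is supported on $\J(\phi_\tau)\supset\partial U$. Once bijectivity holds, $\partial U$ is a finite union of periodic type II points; a local Riemann--Hurwitz identity at each boundary vertex (the local degrees of $\phi_\tau$ along the outgoing tangent directions sum to $d$, and each periodic boundary orbit consumes at least one unit of degree) then yields both the bound $\le d-1$ and the exclusion of type III boundary points. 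For the realization of both cases I would take $\phi(z)=z^d$ with $\tau=\mathrm{id}$ to produce case (1), and a tame twisted map of good reduction fixing the Gauss point (e.g.\ $\phi(z)=z^d$ with a non-trivial $\tau$ satisfying $\lambda_\tau=1$) to produce case (2).

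The main obstacle is the rigorous upgrade of normality plus Theorem~\ref{thm:equ} to the bijectivity of $\phi_\tau|_U$. In the purely rational setting this rests on the open-mapping theorem and a non-archimedean Montel principle (\cite{Favre10}); in the twisted setting one must work with $\tau$-equivariant versions, carefully tracking how $\tau$ permutes tangent directions at each type II point while $\phi$ applies its usual tame degree map on the residue field. Once this dictionary is in place, the directional counting yielding the bound $d-1$ proceeds essentially verbatim from Rivera--Letelier's proof, and the examples are elementary.
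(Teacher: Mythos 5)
The proposal is structured differently from the paper's argument and has several genuine gaps. The paper's proof rests on two preparatory propositions that your sketch skips. First, Proposition~\ref{prop:fix-pt} establishes, via the topological fixed-point theorem on compact trees \cite{Wallace41} together with the sequential compactness of $\sP^1$ \cite[Corollaire 5.9]{Poineau13}, that a fixed Fatou component $U$ actually contains either a type I attracting fixed point or a type II indifferent fixed point. Your dichotomy ``$U$ has a classical attracting fixed point / $U$ has none'' presumes you can immediately reduce to this alternative, but the hard part is producing \emph{any} fixed point inside $U$ and showing it can be taken to be type I attracting or type II indifferent. This is not a consequence of normality or tameness alone: the fact that a twisted map can have infinitely many classical fixed points is precisely why the paper resorts to sequential compactness to extract a limit fixed point. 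Second, in the Rivera case, the paper's Proposition~\ref{prop:periodic-sk} shows that the segment from the interior type II fixed point to any boundary point is pointwise periodic; this, together with the type II fixed point, feeds into Kiwi's degree-counting argument. Your route tries to bypass this step.

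The way you propose to obtain bijectivity of $\phi_\tau|_U$ does not work as stated. You claim that a critical point of $\phi_\tau$ inside $U$ forces either convergence to an attracting classical fixed point or ``backward-orbit accumulation inside $U$, contradicting that $\mu$ is supported on $\J(\phi_\tau)\supset\partial U$.'' Equidistribution (Theorem~\ref{thm:equ}) is a statement about backward orbits of non-exceptional points and gives no direct handle on what forward orbits of critical points in a fixed Fatou component can do, nor does the presence of a critical value in $U$ by itself contradict the support of $\mu$ being $\J(\phi_\tau)$. There is no ``$\tau$-equivariant non-archimedean Montel'' lemma in the paper, and your appeal to one is the part you yourself label as the main obstacle; it is not resolved. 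The paper's actual mechanism is the chain Proposition~\ref{prop:fix-pt} $\Rightarrow$ Proposition~\ref{prop:periodic-sk} (via the local normal-form Lemmas~\ref{lem:tangent1} and \ref{lem:replace}, which are the twisted analogues of Rivera's approximation lemma) $\Rightarrow$ Kiwi's directional counting, and none of those steps invokes equidistribution.

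Your examples also have an error. For $\phi(z)=z^d$ with $\lambda_\tau=1$ (trivial or not), Section~\ref{ex:monomial} shows $\J(\phi_\tau)=\{\xi_G\}$ and the two Fatou components are the superattracting basins of $0$ and $\infty$, i.e.\ both are \emph{attracting} domains. Choosing a nontrivial $\tau$ does not turn these into Rivera domains, so this does not realize case (2). A Rivera domain requires an interior type II indifferent fixed point with $\phi_\tau$ bijective on $U$; one needs a genuinely different example (e.g.\ a map with an indifferent classical fixed point or an indifferent type II fixed cycle), not a monomial.
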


The following description of wandering Fatou domains is also the same as in the case of rational maps in \cite{Benedetto00, Benedetto05}. 

\begin{thm}\label{thm:nonwandering}
Let $k\subset K$ be a discretely valued subfield with algebraically closure $\overline{k}\subset K$. Suppose that for any $\overline{k}$-rational point $\varpi$, there exists a discretely valued subfield $L$ with $k(\varpi)\subset L\subset K$ such that $\tau$ preserves $L$. Then for any tame twisted rational map $\phi_\tau$ of degree at least $2$, any wandering domain of $\F(\phi_\tau)$ containing a $\overline{k}$-rational point is contained in the basin of a type II Julia periodic cycle. 
\end{thm}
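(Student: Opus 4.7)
The plan is to adapt Benedetto's arguments (\cite{Benedetto00, Benedetto05}) for wandering Fatou components to the twisted setting; the hypotheses on $k$, $\overline{k}$, $L$, and $\tau$ are engineered so that the orbit of a $\overline{k}$-rational point under $\phi_\tau$ lives in a $\tau$-stable, discretely valued subfield of $K$, which is precisely the structure that Benedetto's proof requires.

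First I would set up the fields. Applying the hypothesis to the $\overline{k}$-rational point $\varpi$ in the wandering component $U$ (and, if necessary, to the finitely many $\overline{k}$-rational coefficients of $\phi$, then taking a compositum) yields a $\tau$-stable discretely valued subfield $L \subset K$ such that $\phi$ is defined over $L$, $\tau(L) = L$, and $\varpi \in L$. Consequently $\phi_\tau^n(\varpi) \in L$ for every $n \ge 0$, and all type II boundary points of $\phi_\tau^n(U)$ relevant to the analysis have coordinates in $L$.

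Next I would examine the local combinatorial picture at these type II boundary points. A type II point of $\sP^1$ over $L$ corresponds to a closed ball $B(a,s)$ with $a \in L$ and $s$ in the discrete value group $|L^\times|$; its reduction is a copy of $\bP^1$ over the residue field $\widetilde{L}$, whose $\widetilde{L}$-points parametrise tangent directions. Because $\tau(L)=L$ and $|\tau(\cdot)| = |\cdot|^{\lambda_\tau}$, the map $\tau$ permutes type II points over $L$ and descends to a field automorphism $\widetilde{\tau}$ of $\widetilde{L}$ acting on tangent directions. Tameness of $\phi_\tau$ implies that at each type II point the reduction of $\phi$ is an endomorphism of $\bP^1_{\widetilde{L}}$ of bounded degree with only finitely many ramified directions, so the local action of $\phi_\tau = \phi \circ \tau$ is described by a $\widetilde{\tau}$-semilinear endomorphism of $\bP^1_{\widetilde{L}}$ of degree at most $\deg\phi$. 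Benedetto's pigeonhole-and-tree-walk argument -- which uses only that (i) radii lie in a discrete value group and (ii) the local action data have uniformly bounded complexity -- then forces the sequence of boundary data of $\phi_\tau^n(U)$ to be eventually periodic. Hence the boundary type II points themselves enter a periodic cycle, $U$ lies in its basin, and by Theorems \ref{thm:F-J} and \ref{thm:periodic} this cycle consists of type II Julia periodic points.

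The main obstacle is the extra semilinear factor introduced by $\tau$. In the untwisted case the reduction of $\phi$ is an honest endomorphism of $\bP^1_{\widetilde{L}}$ and counting preimages along the Berkovich tree reduces to polynomial algebra over $\widetilde{L}$; in the twisted case the reduction of $\phi_\tau$ is $\widetilde{\tau}$-semilinear and $\widetilde{\tau}$ may have infinite order (as for a generic element of $\mathrm{Gal}(\overline{\bQ}_p/\bQ_p)$). The structural point to be verified is that at each step only finitely many ramification directions are involved and $\widetilde{\tau}$ permutes these compatibly with the discrete radius data, so the combinatorial type of the local picture stays in a finite family despite the possibly infinite order of $\widetilde{\tau}$. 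Once this is checked, the pigeonhole step closes as in the untwisted case and the remainder of Benedetto's argument transfers essentially formally.
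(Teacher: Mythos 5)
Your proposal is correct and takes essentially the same route as the paper: reduce to a $\tau$-stable discretely valued subfield (containing the point and, after a finite extension, the poles and critical points of $\phi_\tau$), so that radii of the relevant type~II points lie in a discrete value group, and then transport Benedetto's no-wandering-domains argument over a discretely valued field (\cite[Theorem~11.23]{Ben19}) to $\phi_\tau$. The paper is terser and simply cites Benedetto's argument, while you usefully spell out the one point that needs checking in the twisted setting, namely that the reduction of $\phi_\tau$ at a type~II point is only $\widetilde{\tau}$-semilinear but the local combinatorial data (bounded degree, finitely many ramified directions, discrete radii) still lie in a finite family, so the pigeonhole step closes.
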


Trucco's description of  Julia sets of certain polynomials in \cite{Trucco14} also applies to twisted polynomials; here we describe the Julia dynamics of twisted polynomials with only escaping critical points: 
\begin{thm}\label{thm:poly}
Let $P_\tau$ be a twisted polynomial such that $P\in K[z]$ is a tame polynomial of degree $d\ge 2$ and satisfies $d\cdot\lambda_\tau>1$. If all critical points of $P_\tau$ are contained in the basin of $\infty$. Then on $\J(P_\tau)$, the map $P_\tau$ is topologically conjugate to the one-sided shift on $d$ symbols.
\end{thm}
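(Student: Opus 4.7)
The plan is to follow Trucco's construction \cite{Trucco14} for tame polynomials with all critical orbits escaping, adapting it to the twisted setting by building a nested tower of closed Berkovich balls whose intersection produces the Julia set with a full Markov structure on $d$ symbols.

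First, I use the hypothesis $d\lambda_\tau>1$ to set up an invariant attracting neighborhood of $\infty$. Writing $P(z)=a_dz^d+\cdots$, one has $|P_\tau(z)|=|a_d|\,|z|^{d\lambda_\tau}>|z|$ for $|z|>R$ with $R$ large, so I can pick $R$ such that the closed Berkovich ball $\mathcal{D}:=\overline{\mathcal{B}}(0,R)$ satisfies $P_\tau^{-1}(\mathcal{D})\subset \mathcal{D}$ and $\sP^1\setminus \mathcal{D}$ is contained in the basin of $\infty$; by enlarging $R$ I also ensure $\mathcal{D}$ contains every critical point and every critical value of $P_\tau$. Setting $\mathcal{D}_n:=P_\tau^{-n}(\mathcal{D})$, the tameness of $P$ makes each $\mathcal{D}_n$ a finite disjoint union of closed Berkovich balls, and $\bigcap_n \mathcal{D}_n$ is the filled Julia set, which equals $\J(P_\tau)$ here because all critical orbits escape. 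Since each critical point $\tilde{c}$ of $P_\tau$ satisfies $P_\tau^{m_{\tilde{c}}}(\tilde{c})\notin \mathcal{D}$ for some minimal $m_{\tilde{c}}$, there exists $N$ with $\mathcal{D}_n$ free of critical points for all $n\ge N$.

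Next, a Berkovich Riemann--Hurwitz count gives that, for $n\ge N$, the map $P_\tau\colon \mathcal{D}_n\to \mathcal{D}_{n-1}$ is an unramified degree-$d$ covering, so each component of $\mathcal{D}_{n-1}$ has exactly $d$ preimage components in $\mathcal{D}_n$, each a Berkovich ball mapping bijectively onto it. Since all critical values lie in $\mathcal{D}$ initially, the sets $\mathcal{D}_n$ remain a single Berkovich ball as long as every critical orbit is still in $\mathcal{D}$; with a suitable choice of $\mathcal{D}$ (enlarging if the escape times of the critical orbits differ, so as to synchronize them), I produce a level $M$ at which $\mathcal{D}_M$ is a single ball $\mathcal{V}$ and $\mathcal{D}_{M+1}=\mathcal{V}_1\sqcup\cdots\sqcup\mathcal{V}_d$ is a disjoint union of $d$ Berkovich balls inside $\mathcal{V}$, each mapped bijectively onto $\mathcal{V}$ by $P_\tau$. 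This produces a full Markov partition on $d$ symbols.

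The standard symbolic dynamics argument then assigns to each $z\in \J(P_\tau)$ the address $(s_0,s_1,\ldots)\in\{1,\ldots,d\}^{\mathbb{N}}$ with $P_\tau^n(z)\in \mathcal{V}_{s_n}$, and conversely sends each sequence to the nested intersection $\bigcap_{n\ge 0} P_\tau^{-n}(\mathcal{V}_{s_n})$ of closed Berkovich balls; intertwining with the shift is immediate from the construction. The main technical obstacle is to show that each nested intersection collapses to a single Berkovich point, which yields bijectivity of the coding and hence a topological conjugacy rather than a mere semi-conjugacy. In the untwisted case this shrinking follows from Berkovich expansion on $\J(P)$; in the twisted setting, $P_\tau$ acts on log-radii of Berkovich balls along an orbit as an affine map of multiplier $\lambda_\tau$ with additive contributions from $\log|P'|$ at successive iterates, and the hypothesis $d\lambda_\tau>1$, combined with the fact that $|P'|$ is bounded below in a neighborhood of $\J(P_\tau)$ (the critical orbits have all escaped), should rule out any invariant Berkovich ball of positive radius inside the filled Julia set. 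Establishing this absence of invariant disks is the essential non-archimedean step of the argument.
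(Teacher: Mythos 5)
You follow the same general Trucco-style strategy as the paper, but there is a gap at the step where you produce the $d$-symbol alphabet. You claim that after adjusting $\mathcal{D}$ one can ``synchronize'' the escape times of the critical orbits, so as to find a level $M$ where $\mathcal{D}_M$ is a single ball and $\mathcal{D}_{M+1}$ consists of $d$ disjoint balls each mapping bijectively onto $\mathcal{D}_M$. This synchronization is not available in general. The preimage $\mathcal{D}_n$ is connected exactly when $\mathcal{D}_{n-1}$ contains every critical value; as soon as the first critical orbit leaves $\mathcal{D}$, say at time $n_0$, the set $\mathcal{D}_{n_0}$ already splits into several components, while other critical points may still lie inside, so the next few levels are still ramified. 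Enlarging $R$ only postpones every escape time, and whether all the escape times can be made equal is governed by the individual critical orbits, not by a free choice of radius. So the clean one-step Markov partition into $d$ balls on which your symbolic coding rests typically does not exist.

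The paper avoids this by working with the preimage tree of the base point $\zeta_{P_\tau}$ rather than with nested balls. It picks $N$ large enough that every point of $\LL_N = P_\tau^{-N}(\zeta_{P_\tau})$ has exactly $d$ preimages in $\LL_{N+1}$ (Proposition~\ref{prop:L} together with tameness and escape of critical orbits), then colors $\LL_{N+1}$ with $d$ colors so that the $d$ preimages of each point of $\LL_N$ receive distinct colors, and reads off the itinerary of a dynamical sequence $\{L_n(x)\}$ for $x\in\J(P_\tau)$ via $i_j = \chi\bigl(P_\tau^j(L_{N+1+j}(x))\bigr)$. This produces a full $d$-shift with no need for synchronization. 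You could repair your proof along the same lines by replacing the claimed Markov partition with a consistent $d$-coloring of the component tree of $\{\mathcal{D}_n\}$ from some level on.

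Your final remark --- that the essential non-archimedean content is showing each nested intersection collapses to a single Berkovich point, giving a genuine conjugacy rather than a semi-conjugacy --- is on target. The paper secures this through the description of $\J(P_\tau)$ as the set of limits of dynamical sequences together with the expansion estimate in Proposition~\ref{prop:fixedinterval}: since $\lambda_\tau d > 1$, the hyperbolic distance between consecutive preimage levels of $\zeta_{P_\tau}$ contracts geometrically, so the balls in a dynamical sequence shrink. Your sketch leaves this step at a ``should,'' and it needs to be made precise to finish the argument.
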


Dynamics of twisted rational maps 
has potential applications in holomorphic dynamics of skew product rational maps on $\bC^2$ of the form 
\[
(x,y)\dashrightarrow (p(x),q(x,y))
\]
where $p,q$ are complex rational functions. Applying one dimensional twisted dynamics to two dimensional holomorphic dynamics is a main motivation of our work, which is an extension of recent fruitful achievements of non-archimedean/Berkovich dynamics tools in holomorphic dynamics (see \cite{Baker11, Serge18, DeMarco14, DeMarco16, Favre20, Favre07, Favre11, Ghioca12, Ji23, Kiwi15, Kiwi23, Luo22, Nie20, Nie22}).
 In a previous paper \cite{Zhao22}, the second author classified pairs of commuting birational transformations of $\bC^2$ by using degree one twisted rational maps. In his PhD thesis, Richard Birkett announced that one can build algebraically stable models for certain two dimensional skew product on $\bC^2$, with his independently developed similar tools of twisted rational maps. For other applications see our subsequent papers.

We end this introduction by mentioning that once the terminologies are settled, most proofs in our paper are similar to the case of rational maps and hence  we often omit the proof when it can be directly transported from the case of rational maps. 

\subsection*{Acknowledgement}
This project begins from a conversation between Junyi Xie and the second author. We thank Junyi for his insight and encouragement. We also thank Richard Birkett, who has independently obtained similar results with different terminologies in his PhD thesis (see \cite{Birkett23}), for his comments on our earlier drafts.

\section{Preliminaries}\label{prelim}

In this section, we provide some background materials on the Berkovich projective line and then introduce actions of twisted rational maps on the Berkovich projective line. After that, we will present some basic properties of twisted rational maps. 

\subsection{The Berkovich projective line}
Standard references are \cite{Baker10, Ben19, Berkovich90, Jon15}. As a set, the Berkovich projective line $\sP^1$ over $K$ consists of all (generalized) multiplicative seminorms on $K(z)$ which restrict to $|\cdot|$ on $K$. Points in $\sP^1$ are usually classified into four types as follows. For any type I, II or III point $\xi\in\sP^1\setminus\{\infty\}$, there exists a unique $K$-closed disk in $\bP^1$ of the form
\[
\overline{D}_\xi=\overline{D}(a,r):=\set[x\in K]{|x-a|\le r}\ \ \text{for}\ \ a\in K\ \ \text{and}\ \ r\ge 0
\]
 such that $\xi$ is the supremum seminorm $\left\|\cdot\right\|_\xi$ defined by $\left\|h\right\|_\xi=\sup_{x\in\overline{D}_\xi}|h(x)|$ for any $h\in K[z]$. Such a point $\xi$ is of type I if $r=0$, of type II if $r\in|K^\times|$, and of type III if $r\not\in|K^\times|\cup\{0\}$.  A type IV point in $\sP^1$ corresponds to (a cofinal equivalence class of) a nested decreasing sequence  of $K$-closed disks with empty intersection. The point $\infty$ corresponds to the evaluation $h\to |h(\infty)|$ for any $h\in K(z)$ and is a type I point. Thus the projective line $\bP^1$ over $K$ is canonically embedded in $\sP^1$ as the set of all type I points. The type II point corresponding to the unit closed disk in $\bP^1$ is called the \emph{Gauss point}, and is denoted by $\xi_G$. If $\xi\in\sP^1\setminus\{\infty\}$ is a point of type I, II or III, then its \emph{diameter} $\diam(\xi)$ is defined to be the diameter of the closed disk $\overline{D}_\xi$ corresponding to $\xi$; in this case we write $\xi=\xi_{a,r}$ where $a$ is any point in $\overline{D}_\xi$ and $r=\diam(\xi)$. The \emph{diameter} of a type IV point is the decreasing limit of diameters of the corresponding sequence of disks.

There is a natural partial ordering 
on $\sP^1$ induced by the inclusion relation among all $K$-closed disks, which gives a tree structure on $\sP^1$. The topology on $\sP^1$ is the weak topology. 
The space $\sP^1$ is Hausdorff, compact, connected, and contains $\bP^1$ as a dense subset.

If $\zeta, \xi$ are points in $\sP^1$, then we denote by $[\zeta, \xi]$ the \emph{segment} (with respect to the tree structure) in $\sP^1$ joining $\zeta$ and $\xi$, and define $]\zeta, \xi] = [\zeta, \xi]\setminus\{\zeta\}$, $[\zeta, \xi[= [\zeta, \xi]\setminus\{\xi\}$ and $]\zeta, \xi[= [\zeta, \xi]\setminus\{\zeta, \xi\}$. 
There is a unique point denoted by $\zeta \vee \xi$ in the intersection of the three closed segments $[\zeta, \xi]$, $[\zeta, \infty]$ and $[\xi,\infty]$; we have $[\zeta, \xi] = [\zeta, \zeta \vee \xi] \cup [\xi, \zeta \vee \xi]$.

At a point $\xi\in\sP^1$, connected components of $\sP^1\setminus\{\xi\}$ induce natural equivalence classes of points in $\sP^1\setminus\{\xi\}$. Each of these equivalence classes is called a \emph{tangent direction} at $\xi$. All tangent directions at $\xi$ form the \emph{tangent space} $T_\xi\sP^1$ of $\sP^1$ at $\xi$. If $\xi$ is of type I or IV, then $\#T_\xi\sP^1=1$; if $\xi$ is of type III, then $\#T_\xi\sP^1=2$; and if $\xi$ is of type II, then $T_\xi\sP^1$ can be identified with the projective line over the residue field of $K$. For any $\vec v\in T_\xi\sP^1$, denote by $\sB(\vec{v})$ its corresponding connected component of $\sP^1\setminus\{\xi\}$. A \emph{Berkovich open disk} is defined to be such a $\sB(\vec{v})$ for some $\xi$ and $\vec v\in T_\xi\sP^1$; the \emph{diameter} of $\sB(\vec{v})$ is $\mathrm{diam}(\xi)$.  Given $\zeta\in\sP^1\setminus\{\xi\}$, denote by $\vec{v}_\xi(\zeta)$ the unique direction at $\xi$ whose corresponding component contains $\zeta$. We sometimes denote by $\aD(0,1)$ the unit open Berkovich disk $\sB(\vec{v}_{\xi_G}(0))$ and by $\aD(\xi,r)$ the open Berkovich disk containing $\xi\in\sP^1$ with diameter $r>0$. 

The \emph{Berkovich hyperbolic space} is $\sH^1:= \sP^1\sm \bP^1$ equipped with a metric $\rho$ defined by
\begin{align}\label{eq:hyp-metric}
\rho(\zeta, \xi) = 2\log(\diam(\zeta \vee \xi)) - \log(\diam(\zeta)) - \log(\diam(\xi)),  \quad \zeta,\xi\in\sH^1.
\end{align}
The topology on $\sH^1$ induced by $\rho$ is finer than the relative topology induced by the weak topology on $\sP^1$. The set of type II points is dense in both $(\sH,\rho)$ and $\sP^1$. 

\subsection{Twisted rational maps}

\subsubsection{Definition}

Denote by $\Aut(K)$ the group of continuous field automorphisms of $K$ and by $\Aut^*(K)\subset\Aut(K)$ the subgroup consisting of $\tau\in\Aut(K)$ for which there is a real number $ \lambda_\tau \in\mathbb{R}_{>0}$ depending on $\tau$ such that
\[|\tau(\cdot)|= |\cdot|^{\lambda_\tau}.\label{eq:lambdatau}\]

Any element of $\Aut^*(K)$ acts as a homeomorphism on $\bP^1$ by fixing $\infty$. Observing that $\tau\in \Aut^*(K)$ maps a closed disk in $K$ to a closed disk in $K$, we can naturally extend the action of $\tau$ to $\sP^1$.
\begin{lem}\label{lem:fund1}
Let $\tau \in \Aut^*(K)$ with $\lambda=\lambda_\tau$. Then $\tau$ uniquely continuously extends to a homeomorphism on $\sP^1$ such that 
$\tau(\xi_{a, r}) = \xi_{\tau(a), r^\lambda}$ for any type I, II, or III point $\xi_{a,r}\in\sP^1$. In particular the type of a point in $\sP^1$ is preserved by $\tau$.
\end{lem}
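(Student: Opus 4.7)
My plan is to construct the extension directly on the level of multiplicative seminorms, which has the advantage of simultaneously handling all four types of points. First I would lift $\tau$ to a field automorphism $\tilde\tau$ of $K(z)$ acting coefficient-wise, so that $\tilde\tau(\sum a_k z^k/\sum b_k z^k) := \sum \tau(a_k) z^k/\sum \tau(b_k) z^k$. For each $\xi \in \sP^1$, viewed as a multiplicative seminorm $\|\cdot\|_\xi$ on $K(z)$ restricting to $|\cdot|$ on $K$, I would define
\[
\|h\|_{\tau(\xi)} \,:=\, \|\tilde\tau^{-1}(h)\|_\xi^{\lambda}, \qquad h \in K(z).
\]
Checking multiplicativity, the ultrametric inequality, and the restriction $\|c\|_{\tau(\xi)} = |c|$ for $c \in K$ is immediate from $\tilde\tau^{-1}$ being a ring homomorphism together with the scaling law $|\tau(\cdot)| = |\cdot|^\lambda$, so the formula genuinely produces a point $\tau(\xi) \in \sP^1$.

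Next I would recover the stated formula on type I, II, III points. For $\xi = \xi_{a,r}$ and any polynomial $h$, the identity $h(\tau(x)) = \tau(\tilde\tau^{-1}(h)(x))$ combined with $|\tau(\cdot)| = |\cdot|^{\lambda}$ yields
\[
\|h\|_{\tau(\xi)} \,=\, \sup_{x \in \overline{D}(a,r)} |h(\tau(x))| \,=\, \sup_{y \in \overline{D}(\tau(a), r^{\lambda})} |h(y)|,
\]
where the last equality uses that $\tau$ restricts to a bijection $\overline{D}(a,r) \to \overline{D}(\tau(a), r^{\lambda})$ (again a direct consequence of the scaling law). This identifies $\tau(\xi_{a,r})$ with $\xi_{\tau(a), r^{\lambda}}$ and shows types I, II, III are preserved.

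For continuity, the weak topology on $\sP^1$ is generated by subbasic open sets of the form $\{\xi : \alpha < \|h\|_\xi < \beta\}$, and the defining formula shows the preimage under $\tau$ of such a set is $\{\xi : \alpha^{1/\lambda} < \|\tilde\tau^{-1}(h)\|_\xi < \beta^{1/\lambda}\}$, which is again open, so $\tau$ is continuous. Since $\tau^{-1} \in \Aut^*(K)$ with $\lambda_{\tau^{-1}} = 1/\lambda$, applying the same construction to $\tau^{-1}$ produces a continuous two-sided inverse, giving the homeomorphism. Uniqueness follows because $\bP^1$ is dense in $\sP^1$, so any continuous extension is determined by its restriction to $\bP^1$. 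Preservation of type IV is then automatic: $\tau$ is a bijection on $\sP^1$ sending types I, II, III onto themselves, so it must also preserve the complementary set of type IV points. The only mildly delicate point is keeping track of the exponent $\lambda$ in the seminorm identity, but this reduces to the ring homomorphism property of $\tilde\tau$ paired with $|\tau(\cdot)| = |\cdot|^{\lambda}$, so no step of the plan should present a serious obstacle.
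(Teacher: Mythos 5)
Your proof is correct and takes a genuinely different, somewhat more systematic route than the paper's. The paper defines the extension directly via the disk formula $\xi_{a,r}\mapsto\xi_{\tau(a),r^\lambda}$ on type I, II, III points, argues that preimages of Berkovich disks are Berkovich disks (hence continuity), and then leaves the definition on type IV points implicit — one has to observe that the map on disks is compatible with nested sequences, or read it off from continuity and density. You instead define the extension at once on \emph{all} of $\sP^1$ via the seminorm pullback $\|h\|_{\tau(\xi)}:=\|\tilde\tau^{-1}(h)\|_\xi^\lambda$, where $\tilde\tau$ acts coefficient-wise on $K(z)$; this handles all four types of points uniformly and makes the multiplicative/ultrametric/restriction axioms a one-line verification from $\tilde\tau^{-1}$ being a ring homomorphism and the scaling law $|\tau(\cdot)|=|\cdot|^\lambda$. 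Continuity then comes for free from preimages of the subbasic sets $\{\alpha<\|h\|_\xi<\beta\}$, and the disk formula becomes a \emph{consequence} you check rather than the definition. The trade-off is that your verification of the disk formula and continuity takes slightly more writing, but it buys you a clean and self-contained treatment of type IV points that the paper's terse proof does not spell out; your closing deduction that type IV is preserved, as the complement of type I/II/III under a bijection, is exactly the right way to finish. One tiny point of care you already noticed implicitly: in checking $\|c\|_{\tau(\xi)}=|c|$ for $c\in K$ one needs $\lambda_{\tau^{-1}}=1/\lambda$, i.e.\ $|\tau^{-1}(c)|=|c|^{1/\lambda}$, which indeed follows from the scaling law applied to $\tau$ at $\tau^{-1}(c)$.
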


\begin{proof}
We extend the action to $\sP^1$ by sending $\xi_{a, r}$ to $\xi_{\tau(a), r^\lambda}$ for any $a\in K$ and $r\ge 0$. In this way the preimage of any Berkovich disk under $\tau$ is a Berkovich disk. Therefore the action is continuous on $\sP^1$. The uniqueness of the extension follows from the density of type I points. To prove that the type of a point is preserved, it suffices to notice that if $r=\vert x \vert$ for some $x\in K^*$ then $r^{\lambda}=\vert \tau(x)\vert$ is also an absolute value.
\end{proof}

Pick $\tau\in\Aut^*(K)$ and let $\phi\in K(z)$ be a rational map. Then both $\tau$ and $\phi$ acts continuously on $\sP^1$ (see Lemma \ref{lem:fund1} and \cite[Section 7]{Ben19}). It follows that the composition $\phi\circ\tau (z)=\phi(\tau(z))$ is a continuous self-map of $\sP^1$. For any type I, II or III point $\xi_{a,r}\in\sP^1$, we have
\[
\phi\circ\tau(\xi_{a,r})=\phi(\xi_{\tau(a), r^{\lambda_\tau}}).
\]


\begin{defn}
If $\tau\in\Aut^*(K)$, then a \emph{$\tau$-twisted rational map} is a composition $\phi\circ \tau$, denoted by $\phi_\tau$, for some rational map $\phi\in K(z)$. We call $\phi_\tau:=\phi\circ\tau$ the \emph{$\tau$-twist} of $\phi$ and $\phi$ the associated rational map of $\phi_\tau$. 
\end{defn}

When the context is clear, we simply call $\phi_\tau$ a twisted rational map and write directly $\lambda_\tau$ for the 
factor in Section \ref{eq:lambdatau}.

\subsubsection{Basic properties}

\begin{lem}\label{lem:fund}
Let $\tau \in \Aut^*(K)$ with $\lambda=\lambda_\tau$. Then we have
\begin{enumerate}
\item for any $\zeta,\xi\in\sH^1$, $\tau([\zeta,\xi])=[\tau(\zeta),\tau(\xi)]$ and $\rho(\tau(\zeta),\tau(\xi))=\lambda\cdot\rho(\zeta,\xi)$; 
\item $\tau(\xi_G)=\xi_G$ and $\tau([0,\infty])=[0,\infty]$.
\end{enumerate}
\end{lem}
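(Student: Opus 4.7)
The plan is to deduce everything from the explicit formula $\tau(\xi_{a,r}) = \xi_{\tau(a), r^\lambda}$ given by Lemma \ref{lem:fund1}, together with the observation that $\tau$ preserves the tree structure.

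First I would show that $\tau$ preserves the partial order on $\sP^1$ induced by inclusion of closed disks. For type I/II/III points the relation $\xi_{a,r}\le \xi_{b,s}$ is equivalent to $|a-b|\le s$ and $r\le s$; applying $\tau$ gives $|\tau(a)-\tau(b)| = |a-b|^\lambda \le s^\lambda$ and $r^\lambda\le s^\lambda$, so $\tau(\xi_{a,r})\le \tau(\xi_{b,s})$. Type IV points are handled by continuity using the fact that $\tau$ is a homeomorphism (Lemma \ref{lem:fund1}) and that such points are limits of nested type II disks. Since $\tau$ is an order-preserving homeomorphism fixing $\infty$, it must preserve the tree structure of $\sP^1$: in particular, for any $\zeta,\xi\in\sH^1$ it satisfies $\tau(\zeta\vee\xi)=\tau(\zeta)\vee\tau(\xi)$ and $\tau$ sends the segment $[\zeta,\xi]$ bijectively onto $[\tau(\zeta),\tau(\xi)]$, proving the first part of (1).

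For the metric statement, observe that by Lemma \ref{lem:fund1} we have $\diam(\tau(\eta)) = \diam(\eta)^\lambda$ for all type II and III points, and this extends to type IV by taking a defining nested sequence of disks. Therefore $\log\diam(\tau(\eta)) = \lambda\log\diam(\eta)$ for every $\eta\in\sH^1$. Plugging this together with $\tau(\zeta\vee\xi) = \tau(\zeta)\vee\tau(\xi)$ into the defining formula
\[
\rho(\tau(\zeta),\tau(\xi)) = 2\log\diam(\tau(\zeta)\vee\tau(\xi)) - \log\diam(\tau(\zeta)) - \log\diam(\tau(\xi))
\]
immediately yields $\rho(\tau(\zeta),\tau(\xi)) = \lambda\cdot\rho(\zeta,\xi)$.

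Part (2) is a direct computation. Since $\xi_G = \xi_{0,1}$, Lemma \ref{lem:fund1} gives $\tau(\xi_G) = \xi_{\tau(0),1^\lambda} = \xi_{0,1} = \xi_G$. The segment $[0,\infty]$ is the set $\{0,\infty\}\cup\{\xi_{0,r} : r>0\}$, and $\tau$ fixes $0$ and $\infty$ and sends $\xi_{0,r}$ to $\xi_{0,r^\lambda}$; since $r\mapsto r^\lambda$ is a bijection of $(0,\infty)$, we conclude $\tau([0,\infty])=[0,\infty]$.

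The only mildly delicate step is the extension of the formula $\diam(\tau(\eta))=\diam(\eta)^\lambda$ and of the order-preservation to type IV points, but this is a routine continuity argument once one knows from Lemma \ref{lem:fund1} that $\tau$ is a type-preserving homeomorphism. No serious obstacle is expected.
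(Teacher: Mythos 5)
Your proof is correct and follows essentially the same route as the paper: everything is reduced to the explicit formula $\tau(\xi_{a,r})=\xi_{\tau(a),r^\lambda}$ from Lemma~\ref{lem:fund1} and the definition~\eqref{eq:hyp-metric}. The only difference in presentation is that you make the order-preservation and join-compatibility explicit and plug them into the full metric formula, whereas the paper first decomposes $[\zeta,\xi]$ at $\zeta\vee\xi$ to reduce to the case of two comparable points, where the metric formula collapses to $\log\bigl(\diam(\xi)/\diam(\zeta)\bigr)$; both arguments rest on the same computation.
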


\begin{proof}
Let $\zeta,\xi\in\sH^1$. As $[\zeta,\xi]=[\zeta,\zeta\vee\xi]\sqcup(\zeta\vee\xi,\xi]$, it suffices to consider the case where 
$\xi\in[\zeta,\infty]$. Then 
$\tau(\xi)\in[\tau(\zeta),\infty]$. Applying Lemma \ref{lem:fund1} and \eqref{eq:hyp-metric}, we conclude that $\tau([\zeta,\xi])=[\tau(\zeta),\tau(\xi)]$ and 
\[
\rho(\tau(\zeta),\tau(\xi))=\log\frac{\diam(\tau(\xi))}{\diam(\tau(\zeta))}=\log\frac{\diam(\xi)^\lambda}{\diam(\zeta)^\lambda}=\lambda\cdot\rho(\zeta,\xi).
\]

By Lemma \ref{lem:fund1}, we have $\tau(\xi_{0, r}) = \xi_{\tau(0), r^\lambda}= \xi_{0, r^\lambda}$ because $\tau(0)=0$, which implies that $\tau([0,\infty])=[0,\infty]$. Taking $r=1$, we have $\tau(\xi_G)=\xi_G$. 
\end{proof}



For $\tau\in \Aut^*(K)$ and $\phi(z)=\frac{\sum_{k=0}^m a_k z^k}{\sum_{k=0}^n b_k z^k}\in K(z)$, we define $\widehat{\tau}(\phi)\in K(z)$ by
\[
\widehat{\tau}(\phi)(z):=\frac{\sum_{k=0}^m \tau(a_k) z^k}{\sum_{k=0}^n\tau(b_k) z^k}.
\]
A direct computation shows the following formula:

\begin{lem}\label{cor:composition}
Let $\tau\in \Aut^*(K)$ and let $\phi\in K(z)$ be a rational map. Then 
$$\phi_\tau=\tau\circ\widehat{\tau^{-1}}(\phi).$$
Moreover, for any $\eta,\beta \in \PGL(2, K)$, 
$$\eta\circ\phi_\tau\circ\beta=\tau\circ\widehat{\tau^{-1}}(\eta)\circ\widehat{\tau^{-1}}(\phi)\circ\beta.$$
\end{lem}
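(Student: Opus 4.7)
The plan is to reduce everything to the observation that, as a self-map of $K$, the operation $\widehat{\tau^{-1}}$ is conjugation by $\tau$, i.e.
\[
\widehat{\tau^{-1}}(\psi) = \tau^{-1}\circ \psi\circ \tau \qquad \text{for every }\psi\in K(z).
\]
This is immediate: writing $\psi(z)=\sum a_k z^k/\sum b_k z^k$, using that $\tau$ is a field automorphism (so it distributes over sums, products and quotients), one computes
\[
\tau^{-1}\bigl(\psi(\tau(z))\bigr)=\tau^{-1}\!\left(\frac{\sum a_k\tau(z)^k}{\sum b_k\tau(z)^k}\right)=\frac{\sum \tau^{-1}(a_k)z^k}{\sum\tau^{-1}(b_k)z^k}=\widehat{\tau^{-1}}(\psi)(z).
\]

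Given this, the first identity is just a rewriting: $\phi_\tau=\phi\circ\tau=\tau\circ(\tau^{-1}\circ \phi\circ\tau)=\tau\circ\widehat{\tau^{-1}}(\phi)$.

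For the second identity, the same conjugation formula shows that $\widehat{\tau^{-1}}$ respects composition, since
\[
\widehat{\tau^{-1}}(\eta)\circ\widehat{\tau^{-1}}(\phi)=(\tau^{-1}\circ\eta\circ\tau)\circ(\tau^{-1}\circ\phi\circ\tau)=\tau^{-1}\circ(\eta\circ\phi)\circ\tau=\widehat{\tau^{-1}}(\eta\circ\phi).
\]
Hence, applying the first identity to the rational map $\eta\circ\phi$ in place of $\phi$,
\[
\eta\circ\phi_\tau\circ\beta=(\eta\circ\phi)\circ\tau\circ\beta=(\eta\circ\phi)_\tau\circ\beta=\tau\circ\widehat{\tau^{-1}}(\eta\circ\phi)\circ\beta=\tau\circ\widehat{\tau^{-1}}(\eta)\circ\widehat{\tau^{-1}}(\phi)\circ\beta,
\]
which is the desired formula.

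There is no real obstacle here; the only point deserving care is the verification that $\widehat{\tau^{-1}}$ is multiplicative with respect to composition of rational maps, and the conjugation reformulation makes this transparent. The two asserted equalities then follow by direct substitution.
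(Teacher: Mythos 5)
Your proof is correct, and it is the ``direct computation'' the paper alludes to without writing out. The conjugation identity $\widehat{\tau^{-1}}(\psi)=\tau^{-1}\circ\psi\circ\tau$ is a clean way to organize that computation and makes both asserted equalities, in particular the compatibility of $\widehat{\tau^{-1}}$ with composition, immediate.
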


Since both $\tau$ and $\phi$ are open maps, we have 

\begin{lem}\label{cor:open}
A twisted rational map $\phi_\tau: \sP^1\to\sP^1$ is an open map if $\phi$ is a non-constant rational map.
\end{lem}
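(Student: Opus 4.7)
The statement is immediate once one recalls the two ingredients. The plan is to observe that $\phi_\tau = \phi \circ \tau$ is a composition of two open self-maps of $\sP^1$, hence itself open.

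First, I would invoke Lemma \ref{lem:fund1}: $\tau$ extends to a homeomorphism of $\sP^1$, so in particular it is an open map (a homeomorphism sends open sets to open sets by definition of its inverse being continuous). Second, I would cite the classical fact that a non-constant rational function $\phi \in K(z)$, viewed as the induced continuous self-map of $\sP^1$, is open; this is standard in Berkovich dynamics (for instance, \cite[Corollary 9.10]{Baker10} or \cite[Section 7]{Ben19}, both of which are already in the reference pool of the paper). Composing, $\phi_\tau = \phi \circ \tau$ sends an open set $U \subseteq \sP^1$ first to the open set $\tau(U)$ and then to the open set $\phi(\tau(U))$, so $\phi_\tau(U)$ is open.

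There is no real obstacle: the only thing to be careful about is that the ``openness of $\phi$'' is the openness of the \emph{Berkovich} extension, not merely of the restriction to $\bP^1(K)$, so I would make sure to cite a reference that states it in that form. If one wanted to avoid quoting this, one could instead note that $\phi$ maps connected affinoids to connected affinoids and that open Berkovich disks form a basis of the topology, but that is heavier than necessary. Hence the proof really is a one-line composition argument.
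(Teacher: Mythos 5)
Your proposal is correct and takes exactly the same route as the paper, which simply observes (in the sentence preceding the lemma) that both $\tau$ and $\phi$ are open maps on $\sP^1$, so their composition $\phi_\tau = \phi \circ \tau$ is open. Your additional care in citing the openness of $\tau$ via Lemma \ref{lem:fund1} and the openness of non-constant rational maps on the Berkovich line is a sensible way of making the one-line argument precise.
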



The following formula concerns the image of an annulus.

\begin{lem}\label{thm:imgtypeII}
 Let $\tau \in \Aut^*(K)$ and let $\phi\in K(z)$ be a non-constant rational map. Let $a\in K$ and pick $0<\theta<1$ sufficiently close to $1$ so that
 \[\phi_\tau(z) = \sum_{n \in \Z} b_n(\tau(z) - a)^n\] converges on the annulus
 \[U_\theta = \set{z\in K : \theta r < |z-\tau^{-1}(a)| < r}\] and such that $\phi(z) - b_0$ has both the inner and outer Weierstrass degrees equal to $d$.
 Then 
 \[\phi_\tau(U_\theta) = 
\begin{cases}
 \left\{z\in K : b_d(\theta r)^{\lambda d} < |z-b_0| < b_dr^{\lambda d}\right\} &\ \ \text{if}\ \ d > 0\\
 \left\{z\in K : b_dr^{\lambda d} < |z-b_0| < b_d(\theta r)^{\lambda d}\right\} &\ \ \text{if}\ \ d < 0.
\end{cases}
\]
Moreover, $\phi_\tau(\xi_{\tau^{-1}(a),r}) = \xi_{b_0, b_dr^{\lambda d}}.$
\end{lem}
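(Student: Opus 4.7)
The plan is to decompose $\phi_\tau = \phi\circ\tau$ and handle the two factors separately: first track how $\tau$ transforms the annulus $U_\theta$ using the dilation rule of Lemma \ref{lem:fund1}, and then apply the standard non-archimedean image-of-an-annulus result to $\phi$ on the resulting annulus. For the $\tau$-step, since $\tau$ is a field automorphism with $\tau(\tau^{-1}(a))=a$, for any $z\in K$ one has $|\tau(z)-a|=|\tau(z-\tau^{-1}(a))|=|z-\tau^{-1}(a)|^{\lambda}$. This shows that $\tau$ maps $U_\theta$ bijectively onto the annulus
\[
V := \{w \in K : (\theta r)^{\lambda} < |w-a| < r^{\lambda}\},
\]
and by Lemma \ref{lem:fund1} sends $\xi_{\tau^{-1}(a),r}$ to $\xi_{a,r^{\lambda}}$.

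Next I would analyse $\phi(V)$ using the Laurent series $\phi(w)-b_0=\sum_{n\neq 0}b_n(w-a)^n$, which converges on $V$ by hypothesis. The assumption that both the inner and outer Weierstrass degrees of $\phi-b_0$ on $V$ equal $d$ means that the convex piecewise-linear function $\rho\mapsto \max_{n\neq 0}\bigl(\log|b_n|+n\log\rho\bigr)$ attains its unique maximum at $n=d$ at both endpoints $\rho=(\theta r)^{\lambda}$ and $\rho=r^{\lambda}$; by convexity the same holds for every intermediate $\rho$. The strong triangle inequality then gives the pointwise identity
\[
|\phi(w)-b_0| \;=\; |b_d|\cdot|w-a|^{d}\qquad\text{for every } w\in V.
\]
Since $\rho\mapsto|b_d|\rho^{d}$ is monotone (increasing if $d>0$, decreasing if $d<0$), the image $\phi(V)$ is contained in the claimed annulus centred at $b_0$; surjectivity onto that annulus follows from the standard fact that on an annulus with constant Weierstrass degree $d$, each value in the target annulus is attained exactly $|d|$ times (a Weierstrass preparation / Mittag-Leffler argument in the non-archimedean setting, e.g.\ from \cite{Ben19}).

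Finally, for the type II/III assertion, combining the two steps gives $\phi_\tau(\xi_{\tau^{-1}(a),r})=\phi(\xi_{a,r^{\lambda}})$, and the standard formula for the image of a Gauss-type point with unique dominant Laurent term $b_d(w-a)^d$ identifies this with $\xi_{b_0,|b_d|r^{\lambda d}}$. The only nontrivial step in the argument is the convexity-of-Weierstrass-degree observation that forces $n=d$ to dominate throughout $V$; everything else is either a direct translation via Lemma \ref{lem:fund1} or a citation of the classical rational-map version. I expect no conceptual obstacle — the main care is in bookkeeping the substitution $|w-a|=|z-\tau^{-1}(a)|^{\lambda}$ and in writing $|b_d|$ rather than $b_d$ in the radii.
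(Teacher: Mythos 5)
Your proposal is correct and follows essentially the same route as the paper: the paper's proof is a one-line citation of \cite[Theorem 7.12]{Ben19} together with Lemma~\ref{lem:fund1}, and your argument simply unpacks exactly that decomposition — use Lemma~\ref{lem:fund1} to see that $\tau$ sends the annulus $U_\theta$ to $\{w: (\theta r)^\lambda < |w-a| < r^\lambda\}$ and $\xi_{\tau^{-1}(a),r}$ to $\xi_{a,r^\lambda}$, then apply the classical image-of-an-annulus theorem for the rational map $\phi$ on that new annulus. Your parenthetical observation that the radii should involve $|b_d|$ rather than $b_d$ is also correct; this is a typographical slip in the statement of the lemma.
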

\begin{proof}
It follows immediately from \cite[Theorem 7.12]{Ben19} and Lemma \ref{lem:fund1}.
\end{proof}


\subsection{Tangent map}
Let $\tau \in \Aut^*(K)$ and let $\phi\in K(z)$ be a non-constant rational map. Pick a point $\xi\in\sP^1$ and denote by $T_\xi\phi$ the tangent map of $\phi$ at $\xi$. By Lemma \ref{lem:fund1}, for any $\vec{v}\in T_\xi\sP^1$ and any $\zeta\in\sB(\vec{v})$, there is a unique $\vec{w}\in T_{\tau(\xi)}\sP^1$ such that $\tau(\zeta)\in\sB(\vec{w})$. This induces a map 
\[T_\xi\tau:T_\xi\sP^1\to T_{\tau(\xi)}\sP^1,\]
sending $\vec v$ to $\vec w$. Then we define the \emph{tangent map} of the twisted rational map $\phi_\tau$ to be
\[T_\xi\phi_\tau:=T_{\tau(\xi)}\phi\circ T_\xi\tau.\]

It follows from the definitions that tangents maps satisfy the chain rule:

\begin{lem}\label{lem:tangent}
Let $\tau,\upsilon\in \Aut^*(K)$ and let $\phi,\psi\in K(z)$ be non-constant rational maps.  Then for any $\xi\in\sP^1$,
$$T_\xi(\phi_\tau\circ\psi_{\upsilon})=T_{\psi_{\upsilon}(\xi)}\phi_\tau\circ T_\xi\psi_{\upsilon}.$$
\end{lem}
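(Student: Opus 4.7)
My plan is to reduce the claim to two auxiliary chain rules and then chase definitions.

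First I would record the analogous chain rule for elements of $\Aut^*(K)$: for any $\tau,\upsilon\in\Aut^*(K)$ and any $\xi\in\sP^1$,
\[
T_\xi(\tau\circ\upsilon)=T_{\upsilon(\xi)}\tau\circ T_\xi\upsilon.
\]
This is immediate from Lemma \ref{lem:fund1}, since both $\tau$ and $\upsilon$ send every Berkovich disk to a Berkovich disk; their tangent maps are therefore determined by which target disk contains the image of a given direction, and that tracking composes automatically. Next I would invoke the classical chain rule for the tangent maps of non-constant rational maps,
\[
T_\xi(\phi\circ\psi)=T_{\psi(\xi)}\phi\circ T_\xi\psi,
\]
a standard fact in non-archimedean dynamics, proved for instance in \cite[Chapter 7]{Ben19}.

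With these two ingredients in hand, the rest is bookkeeping. Unfolding $\phi_\tau\circ\psi_\upsilon=\phi\circ\tau\circ\psi\circ\upsilon$ and applying the two chain rules iteratively to this four-fold composition yields
\[
T_\xi(\phi_\tau\circ\psi_\upsilon)=T_{(\tau\circ\psi\circ\upsilon)(\xi)}\phi\circ T_{(\psi\circ\upsilon)(\xi)}\tau\circ T_{\upsilon(\xi)}\psi\circ T_\xi\upsilon.
\]
On the other hand, the definition of the tangent map of a twisted rational map gives
\[
T_{\psi_\upsilon(\xi)}\phi_\tau=T_{(\tau\circ\psi\circ\upsilon)(\xi)}\phi\circ T_{(\psi\circ\upsilon)(\xi)}\tau \qquad\text{and}\qquad T_\xi\psi_\upsilon=T_{\upsilon(\xi)}\psi\circ T_\xi\upsilon,
\]
and composing these two expressions reproduces the same four-fold composition of tangent maps. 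The two sides therefore agree.

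The only input beyond formal unwinding of definitions is the tangent-map chain rule for rational maps, which is already in the literature; once that is granted, the statement reduces to the observation that the definition $T_\xi\phi_\tau:=T_{\tau(\xi)}\phi\circ T_\xi\tau$ is compatible with regrouping the composition $(\phi\circ\tau)\circ(\psi\circ\upsilon)$ as $\phi\circ(\tau\circ\psi\circ\upsilon)$ and as $(\phi\circ\tau\circ\psi)\circ\upsilon$. I do not anticipate any serious obstacle.
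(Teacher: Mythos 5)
The paper states this lemma with no proof at all, prefacing it only with the remark ``It follows from the definitions that tangent maps satisfy the chain rule,'' so there is nothing explicit to compare against; your proposal is a reasonable expansion of that sentence, and the conclusion is correct.

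One point is worth tightening. The four-fold composition $\phi\circ\tau\circ\psi\circ\upsilon$ alternates between rational maps and elements of $\Aut^*(K)$, so it is never of the form (rational)$\circ$(rational) or (automorphism)$\circ$(automorphism); the two chain rules you list are therefore never directly applicable to any adjacent pair in the factorization. What you actually need is the chain rule $T_\zeta(f\circ g)=T_{g(\zeta)}f\circ T_\zeta g$ for an arbitrary composition of maps that send Berkovich open disks to Berkovich open disks or to $\sP^1$ (so that tangent maps are defined and direction-tracking makes sense). That is the statement to extract; once you have it, your bookkeeping goes through word for word, and it also subsumes both special cases you wrote down. Your explanation for the $\Aut^*(K)$ case---``their tangent maps are determined by which target disk contains the image of a given direction, and that tracking composes automatically''---is in fact the proof of this general chain rule, it just should be stated as applying to the mixed compositions (e.g.\ $T_\zeta(\tau\circ\psi)=T_{\psi(\zeta)}\tau\circ T_\zeta\psi$) rather than only to $\tau\circ\upsilon$. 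With that adjustment the argument is complete and entirely in the spirit the authors intended.
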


\begin{rmk}\label{rmk:not}
Applying Lemma \ref{lem:tangent}, we can show that there exist a field $K$ and $\tau\in \Aut^*(K)$ such that for any non-constant rational map $\phi\in K(z)$, the $n$-th iterate $\phi_\tau^n$ of the twisted rational map $\phi_\tau$ is not in $K(z)$ for any integer $n\ge 1$. A key property that any rational map $\psi\in K(z)$ satisfies is the following: for any type II point $\xi\in\sP^1$, and any $\vec{v}\in T_\xi\sP^1$, there exists $\xi_1\in\sB(\vec{v})$ such that the tangent map $T_\zeta\psi$ is independent of the point $\zeta$ in the segment $]\xi_1, \xi[$ (if we identify locally the tangent spaces at different points). This can be seen from the convergent series of $\psi$. Now we claim that this property does not hold for general twisted rational maps. Consider a field $K$ possessing a $\tau\in\Aut^*(K)$ such that $|\tau(x)|=|x|$ and $\tau(x)/x$ is not a root of unity in $K$ for any $x\in K^*$. Then for a non-constant rational map $\phi$, for any type II point $\xi\in]0,\xi_G[$ and for any $n\ge 1$, the tangent map $T_\zeta\phi_\tau^n$ of the $n$-th iterate $\phi_\tau^n$ is not constant on $]\xi, \xi_G[$. For example, the above phenomenon happens when $K=\overline{\mathbb{C}\{\{t\}\}}$ is the completion of the Puiseux series over $\mathbb{C}$ and $\tau\in\Aut^*(K)$ is an automorphism of $K$ sending $t$ to $2t$. We refer the reader to \cite{Deschamps05} for such automorphisms.
\end{rmk}

Tangents maps determine the images of open Berkovich disks in the following sense.
\begin{prop}\label{prop:image-whole}
Let $\tau \in \Aut^*(K)$ and let $\phi\in K(z)$ be a non-constant rational map. For any $\xi\in\sP^1$ and any direction $\vec{v}\in T_\xi\sP^1$, the image $\phi_\tau(\sB(\vec{v}))$ is either the whole space $\sP^1$ or the open Berkovich disk with boundary $\phi_\tau(\xi)$ corresponding to $T_\xi\phi_\tau(\vec{v})$. 
\end{prop}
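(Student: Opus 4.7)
The plan is to factor $\phi_\tau = \phi \circ \tau$ and reduce the statement to the known analogue for non-constant rational maps, combined with a direct analysis of the continuous extension of $\tau$ to the Berkovich line.

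First, I would show that $\tau$ sends the open Berkovich disk $\sB(\vec{v})$ bijectively onto the open Berkovich disk $\sB(T_\xi\tau(\vec{v}))$. By Lemma \ref{lem:fund1}, $\tau$ is a homeomorphism of $\sP^1$ that preserves the type of points and acts on type I, II, and III points by $\xi_{a,r}\mapsto \xi_{\tau(a),r^{\lambda}}$; in particular it preserves the partial order (equivalently the tree structure), since $\tau$ sends each closed disk in $K$ to a closed disk in $K$ and respects inclusions. Therefore $\tau$ permutes the connected components of $\sP^1\setminus\{\xi\}$, mapping the component $\sB(\vec{v})$ onto a component of $\sP^1\setminus\{\tau(\xi)\}$, and by the very definition of $T_\xi\tau$ this component is $\sB(T_\xi\tau(\vec{v}))$. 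Hence
\[
\tau(\sB(\vec{v})) = \sB(T_\xi\tau(\vec{v})).
\]

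Next, I would invoke the corresponding statement for non-constant rational maps, which is classical: for the rational map $\phi$ and any open Berkovich disk $\sB(\vec{w})$ with boundary point $\eta\in\sP^1$, the image $\phi(\sB(\vec{w}))$ is either all of $\sP^1$ or the open Berkovich disk with boundary $\phi(\eta)$ in the direction $T_\eta\phi(\vec{w})$ (see \cite[Section 7]{Ben19} or the analogous statement in \cite{Baker10, Rivera03II}). Applying this with $\eta = \tau(\xi)$ and $\vec{w} = T_\xi\tau(\vec{v})$, I conclude that
\[
\phi(\sB(T_\xi\tau(\vec{v})))
\]
is either $\sP^1$ or the open Berkovich disk with boundary $\phi(\tau(\xi)) = \phi_\tau(\xi)$ in the direction $T_{\tau(\xi)}\phi(T_\xi\tau(\vec{v}))$.

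Combining the two steps and recalling the definition
\[
T_\xi\phi_\tau = T_{\tau(\xi)}\phi \circ T_\xi\tau,
\]
I obtain that $\phi_\tau(\sB(\vec{v})) = \phi(\tau(\sB(\vec{v}))) = \phi(\sB(T_\xi\tau(\vec{v})))$ is either all of $\sP^1$ or the open Berkovich disk with boundary $\phi_\tau(\xi)$ corresponding to $T_\xi\phi_\tau(\vec{v})$, as required. The only delicate point is verifying that $\tau$ truly maps $\sB(\vec{v})$ onto $\sB(T_\xi\tau(\vec{v}))$ rather than merely into it, which one can guarantee by noting that $\tau^{-1}\in\Aut^*(K)$ (with factor $1/\lambda_\tau$) provides a continuous inverse, so $\tau$ is a homeomorphism and in particular surjective onto each component it maps into; this is the main routine obstacle and follows cleanly from Lemma \ref{lem:fund1}.
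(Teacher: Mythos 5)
Your proposal is correct and follows essentially the same route as the paper: factor $\phi_\tau=\phi\circ\tau$, observe that $\tau$ (being a homeomorphism of $\sP^1$ preserving the tree structure) carries $\sB(\vec{v})$ onto $\sB(T_\xi\tau(\vec{v}))$, and then invoke the classical fact that a non-constant rational map sends an open Berkovich disk to an open Berkovich disk or to all of $\sP^1$. The paper's proof is a one-line citation to \cite[Proposition 9.41]{Baker10}; you have simply spelled out the intermediate step about $\tau$ and the bookkeeping with the tangent maps, which is a reasonable amount of extra detail but not a different argument.
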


\begin{proof}
The conclusion follows immediately from the fact that a rational map maps an open Berkovich disk to an open Berkovich disk or to the whole space (see \cite[Proposition 9.41]{Baker10}).
\end{proof}

We say that a direction $\vec{v}\in T_\xi\sP^1$ is \emph{good} for a twisted rational map $\phi_\tau$ if $\phi_\tau(\sB(\vec{v}))$ is an open Berkovich disk, and \emph{bad} if $\phi_\tau(\sB(\vec{v}))$ is $\sP^1$.

\subsection{Local degree and ramification}\label{sec:degree}
If $\phi\in K(z)$ is a rational map, then for any point $\zeta\in\sP^1$ and any direction $\vec{v}\in T_\zeta\sP^1$, one can define the local degree $\deg_\zeta\phi$ at $\zeta$, the directional multiplicity $m_{\vec{v}}(\phi)$ of $\vec{v}$ and surplus multiplicity $s_{\vec{v}}(\phi)$ of $\vec{v}$, see \cite{Faber13I},\cite[Chapter 7]{Ben19} and \cite[Chapter 9]{Baker10}. We can define these quantities for twisted rational maps as well.

For any $\tau \in \Aut^*(K)$, we set $\deg\tau:=1$, as suggested by Lemma \ref{lem:fund1}. Then the \emph{local degree} of $\tau$ at any point $\xi\in\sP^1$ is defined to be $1$, i.e.\ $\deg_\xi\tau:=1$. 

For any rational map $\phi\in K(z)$, we define the \emph{degree} of $\phi_\tau$ to be $\deg\phi_\tau:=\deg\phi$ and define the \emph{local degree}, the \emph{directional multiplicity} and the \emph{surplus multiplicity} of $\phi_\tau$ at any $\xi\in\sP^1$ and $\vec{v}\in T_\xi\sP^1$ to be those of $\phi$ at $\tau(\xi)$ and $T_\xi \tau (\vec{v})$, i.e.\ $\deg_\xi\phi_\tau:=\deg_{\tau(\xi)}\phi$, $m_{\vec{v}}(\phi_\tau):=m_{T_\xi \tau (\vec{v})}(\phi)$  and $s_{\vec{v}}(\phi_\tau):=s_{T_\xi \tau (\vec{v})}(\phi)$.

Now we state some properties (Propositions \ref{prop:deg-basic}, \ref{prop:multi-basic}, \ref{prop:deg-composition} and \ref{prop:deg-image}) for local degrees and multiplicities of twisted rational maps; all of them are easily obtained from the rational case, so we omit the proofs. 
\begin{prop}\label{prop:deg-basic}
Let $\phi_\tau$ be a twisted rational map with degree $d\ge 1$. For any $\xi\in\sP^1$, the following hold: 
\begin{enumerate}
\item $1\le\deg_\xi\phi_\tau\le d$.
\item $\sum_{\zeta\in\phi_\tau^{-1}(\xi)} \deg_\zeta\phi_\tau =d$.
\end{enumerate}
\end{prop}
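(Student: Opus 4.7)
The plan is to reduce both statements to the corresponding well-known facts for the rational map $\phi$ alone, using that $\tau$ is a homeomorphism of $\sP^1$ whose local degree we have declared to be $1$ everywhere.

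For (1), I would simply unpack the definition: by construction, $\deg_\xi \phi_\tau := \deg_{\tau(\xi)} \phi$. The corresponding inequality $1 \le \deg_{\tau(\xi)} \phi \le \deg \phi = d$ is the standard local-degree bound for rational maps on the Berkovich projective line (cf.\ \cite[Chapter 9]{Baker10} or \cite[Chapter 7]{Ben19}). Since $\deg \phi_\tau$ was defined to equal $\deg \phi$, the bound transfers verbatim.

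For (2), the key observation is that $\phi_\tau = \phi \circ \tau$ and $\tau$ is a bijection of $\sP^1$ by Lemma \ref{lem:fund1}. Hence
\[
\phi_\tau^{-1}(\xi) \;=\; \tau^{-1}\bigl(\phi^{-1}(\xi)\bigr),
\]
and the map $\eta \mapsto \tau^{-1}(\eta)$ provides a bijection between $\phi^{-1}(\xi)$ and $\phi_\tau^{-1}(\xi)$. Combining this with the definition $\deg_{\tau^{-1}(\eta)} \phi_\tau = \deg_{\tau(\tau^{-1}(\eta))} \phi = \deg_\eta \phi$ yields
\[
\sum_{\zeta \in \phi_\tau^{-1}(\xi)} \deg_\zeta \phi_\tau
\;=\; \sum_{\eta \in \phi^{-1}(\xi)} \deg_\eta \phi
\;=\; d,
\]
where the last equality is the standard fiber formula for non-constant rational maps on $\sP^1$.

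There is essentially no obstacle here: the whole point is that twisting by $\tau$ (which we have decreed to behave like a degree-one map) can only permute preimages without changing multiplicities. The only thing that would need checking more carefully is the implicit claim that $\tau$ is a set-theoretic bijection of $\sP^1$, but this is exactly the content of Lemma \ref{lem:fund1}. So the proof is really a one-line appeal to the rational-map version plus the bijectivity of $\tau$, which is why the authors are comfortable omitting it.
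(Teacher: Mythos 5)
Your proof is correct and is exactly the reduction the paper has in mind: the paper omits the proof precisely because, as you observe, both statements follow immediately from the rational-map versions once one unpacks the definitions $\deg_\xi\phi_\tau:=\deg_{\tau(\xi)}\phi$ and $\deg\phi_\tau:=\deg\phi$ and uses that $\tau$ is a bijection of $\sP^1$ (Lemma \ref{lem:fund1}). Nothing to add.
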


\begin{prop}\label{prop:multi-basic}
Let $\phi$ be a twisted rational map with degree $d\ge 1$. For any $\xi\in\sP^1$ and $\vec{v}\in T_\xi\sP^1$, the following hold: 
\begin{enumerate}
\item There exists a point $\zeta \in \sB(\vec v)$ such that $\phi_\tau$ maps the segment $[\zeta , \xi]$ homeomorphically onto $[\phi_\tau(\zeta), \phi_\tau(\xi)]$, and
 \[\rho(\phi_\tau(\xi_1) , \phi_\tau(\xi_2)) = \lambda_\tau m_{\vec v}(\phi_\tau)\cdot \rho(\xi_1 , \xi_2),\quad \forall \xi_1, \xi_2 \in [\zeta , \xi]\cap\sH^1.\]
 In particular, $\phi_\tau$ is an isometry on $(\sH^1,\rho)$ if and only if $d = 1$ and $\lambda_\tau=1$.
\item The direction $\vec{v}$ is bad if and only if $s_{\vec{v}}(\phi_\tau)\ge 1$.
 \item $\deg_\zeta\phi_\tau+\sum_{\vec{v}\in T_\zeta\sP^1}s_{\vec{v}}(\phi_\tau)=d$.
\end{enumerate}
\end{prop}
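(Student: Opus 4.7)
The plan is to reduce each of the three assertions to the corresponding result for ordinary rational maps via the decomposition $\phi_\tau=\phi\circ\tau$, exploiting the elementary scaling behaviour of $\tau$ encoded in Lemmas \ref{lem:fund1} and \ref{lem:fund}. The definitions of $\deg_\xi\phi_\tau$, $m_{\vec v}(\phi_\tau)$ and $s_{\vec v}(\phi_\tau)$ are set up precisely so that these quantities transport across $\tau$ in an identical fashion.

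For (1), I would first invoke the rational-map version of this statement (see for instance \cite[Chapter 9]{Baker10} or \cite[Chapter 7]{Ben19}) at the point $\tau(\xi)$ and the direction $T_\xi\tau(\vec v)\in T_{\tau(\xi)}\sP^1$: this yields a point $\eta\in\sB(T_\xi\tau(\vec v))$ such that $\phi$ carries the segment $[\eta,\tau(\xi)]$ homeomorphically onto $[\phi(\eta),\phi(\tau(\xi))]$ and scales the hyperbolic metric by the factor $m_{T_\xi\tau(\vec v)}(\phi)=m_{\vec v}(\phi_\tau)$. Setting $\zeta:=\tau^{-1}(\eta)\in\sB(\vec v)$ and combining with Lemma \ref{lem:fund}, which says that $\tau$ preserves segments and dilates $\rho$ by $\lambda_\tau$, we obtain the desired factorisation: $\phi_\tau$ maps $[\zeta,\xi]$ homeomorphically onto $[\phi_\tau(\zeta),\phi_\tau(\xi)]$ with combined dilation factor $\lambda_\tau m_{\vec v}(\phi_\tau)$. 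For the ``in particular'' statement, if $\phi_\tau$ is a $\rho$-isometry, then applying the formula at every $(\xi,\vec v)$ forces $\lambda_\tau m_{\vec v}(\phi_\tau)=1$; summing over $\vec v\in T_\xi\sP^1$ via part (3) gives $d=1$, whence $m_{\vec v}(\phi_\tau)=1$ and $\lambda_\tau=1$. The converse is immediate.

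For (2), the key remark is that by Lemma \ref{lem:fund1}, $\tau$ sends the Berkovich open disk $\sB(\vec v)$ homeomorphically onto $\sB(T_\xi\tau(\vec v))$, so $\phi_\tau(\sB(\vec v))=\phi\bigl(\sB(T_\xi\tau(\vec v))\bigr)$. Hence $\vec v$ is bad for $\phi_\tau$ if and only if $T_\xi\tau(\vec v)$ is bad for $\phi$; by the rational-map statement this is equivalent to $s_{T_\xi\tau(\vec v)}(\phi)\ge 1$, which by definition equals $s_{\vec v}(\phi_\tau)$. For (3), I would simply reindex: since $T_\zeta\tau:T_\zeta\sP^1\to T_{\tau(\zeta)}\sP^1$ is a bijection, writing $\vec w=T_\zeta\tau(\vec v)$ yields
\[
\deg_\zeta\phi_\tau+\sum_{\vec v\in T_\zeta\sP^1}s_{\vec v}(\phi_\tau)=\deg_{\tau(\zeta)}\phi+\sum_{\vec w\in T_{\tau(\zeta)}\sP^1}s_{\vec w}(\phi)=d,
\]
the last equality being the classical identity for rational maps.

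The main obstacle is essentially notational rather than conceptual: the only real content beyond the rational-map case is that $\tau$ acts as a type-preserving homeomorphism which dilates $\rho$ by $\lambda_\tau$ and induces bijections on tangent spaces. Once that is in hand, the proof of the proposition is pure bookkeeping, which is consistent with the authors' remark that they omit it.
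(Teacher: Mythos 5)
Your strategy---transport the statement to $\tau(\xi)$ and $T_\xi\tau(\vec v)$, invoke the rational-map version of the proposition there, and pull back using Lemmas \ref{lem:fund1} and \ref{lem:fund}---is exactly what the paper has in mind; the authors omit the proof precisely because the definitions of $\deg_\xi\phi_\tau$, $m_{\vec v}(\phi_\tau)$, $s_{\vec v}(\phi_\tau)$ are rigged so that this bookkeeping is immediate. The main assertion of (1), and all of (2) and (3), are handled correctly: in (1) the segment $[\zeta,\xi]$ is carried by $\tau$ isometrically up to the factor $\lambda_\tau$ onto $[\eta,\tau(\xi)]$, and then by $\phi$ with factor $m_{T_\xi\tau(\vec v)}(\phi)=m_{\vec v}(\phi_\tau)$; in (2) the identity $\phi_\tau(\sB(\vec v))=\phi(\sB(T_\xi\tau(\vec v)))$ reduces badness for $\phi_\tau$ to badness for $\phi$; in (3) the bijection $T_\zeta\tau$ reindexes the sum.

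The one step that is too loose is your justification of the ``in particular'' in (1): you write that ``summing over $\vec v$ via part (3) gives $d=1$'', but part (3) concerns surplus multiplicities $s_{\vec v}$, whereas the isometry hypothesis controls only the directional multiplicities $m_{\vec v}$ (via $\lambda_\tau m_{\vec v}(\phi_\tau)=1$ for all $(\xi,\vec v)$), so ``summing over $\vec v$'' in part (3) does not directly apply. A clean way to close this: an isometry of $(\sH^1,\rho)$ is injective, so each type II point has a unique type II preimage. At a type II point $\zeta$, the condition $\lambda_\tau m_{\vec v}(\phi_\tau)=1$ for all $\vec v\in T_\zeta\sP^1$ means the tangent map has constant local degree $1/\lambda_\tau$; since a nonconstant rational self-map of $\bP^1(\widetilde K)$ has local degree $1$ outside a finite set, this forces $\lambda_\tau=1$ and $\deg_\zeta\phi_\tau=1$. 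Combining with injectivity and Proposition \ref{prop:deg-basic}(2) then yields $d=1$. This is a minor repair; the rest of the argument is the intended one.
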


The local degrees and multiplicities satisfy the following chain rules.
\begin{prop}\label{prop:deg-composition}
Let $\phi_\tau$ and $\psi_{\upsilon}$ be two twisted rational maps of degree at least $1$. For any $\xi \in \sP^1$, and any $\vec v \in T_\xi\sP^1$, the following hold:
\begin{enumerate}
\item $\deg_\xi(\psi_{\upsilon}\circ\phi_\tau) = \deg_{\phi_\tau(\xi)}\psi_{\upsilon}\cdot\deg_\xi\phi_\tau$.
\item $m_{\vec v}(\psi_{\upsilon}\circ\phi_\tau) =m_{T_\xi\phi_\tau(\vec v)}(\psi_{\upsilon})\cdot  m_{\vec v}(\phi_\tau)$.
\item $s_{\vec v}(\psi_{\upsilon}\circ\phi_\tau) =s_{T_\xi\phi_\tau(\vec v)}(\psi_{\upsilon})+ m_{\vec v}(\phi_\tau) \cdot s_{\vec v}(\phi_\tau)$.
\end{enumerate}
\end{prop}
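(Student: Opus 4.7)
The plan is to reduce each formula to the analogous chain rule for ordinary rational maps by expanding the composition as $\psi_\upsilon\circ\phi_\tau=\psi\circ\upsilon\circ\phi\circ\tau$ and exploiting the fact that $\tau,\upsilon\in\Aut^*(K)$ contribute trivially to every local invariant. The required input from the rational case is the standard chain rule for $\deg_\zeta$, $m_{\vec v}$ and $s_{\vec v}$ under compositions of rational maps, as recorded in \cite[Chapter 9]{Baker10} and \cite[Chapter 7]{Ben19}.

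The first step is to observe that any $\upsilon\in\Aut^*(K)$ is a tree automorphism of $\sP^1$ (Lemmas \ref{lem:fund1} and \ref{lem:fund}) whose tangent map $T_\zeta\upsilon\colon T_\zeta\sP^1\to T_{\upsilon(\zeta)}\sP^1$ is a bijection, and each open Berkovich disk at $\zeta$ is sent homeomorphically onto an open Berkovich disk at $\upsilon(\zeta)$. Consequently, regarding $\upsilon$ as the twisted rational map $\mathrm{id}_\upsilon$ of degree one, one has $\deg_\zeta\upsilon=1$, $m_{\vec w}(\upsilon)=1$ and $s_{\vec w}(\upsilon)=0$ at every $\zeta\in\sP^1$ and $\vec w\in T_\zeta\sP^1$; this is consistent with the convention $\deg\tau:=1$ introduced in Section \ref{sec:degree}.

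For (1), I would apply the rational chain rule four times in succession:
\[
\deg_\xi(\psi_\upsilon\circ\phi_\tau)=\deg_{\tau(\xi)}(\psi\circ\upsilon\circ\phi)\cdot\deg_\xi\tau
=\deg_{\upsilon(\phi(\tau(\xi)))}\psi\cdot\deg_{\phi(\tau(\xi))}\upsilon\cdot\deg_{\tau(\xi)}\phi\cdot\deg_\xi\tau,
\]
collapse the two $\deg\,\tau,\deg\,\upsilon$ factors to $1$, and translate the remaining factors back into twisted notation via $\deg_{\tau(\xi)}\phi=\deg_\xi\phi_\tau$ and $\deg_{\upsilon(\phi_\tau(\xi))}\psi=\deg_{\phi_\tau(\xi)}\psi_\upsilon$. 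The same pattern handles (2) and (3): the rational chain rules for $m_{\vec v}$ and $s_{\vec v}$ applied to the fourfold decomposition, combined with $m_{\vec w}(\tau)=m_{\vec w}(\upsilon)=1$ and $s_{\vec w}(\tau)=s_{\vec w}(\upsilon)=0$, leave only the expected twisted factors $m_{T_\xi\phi_\tau(\vec v)}(\psi_\upsilon)\cdot m_{\vec v}(\phi_\tau)$ and, respectively, $s_{T_\xi\phi_\tau(\vec v)}(\psi_\upsilon)+m_{\vec v}(\phi_\tau)\cdot s_{\vec v}(\phi_\tau)$ (where I have used the paper's convention that the rational chain rule for $s$ reads $s_{\vec u}(\psi\circ\upsilon)=s_{T\upsilon(\vec u)}(\psi)+m_{\vec u}(\upsilon)\cdot s_{\vec u}(\upsilon)$, and specialize).

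The only real obstacle is bookkeeping: one has to verify, at each stage of the fourfold composition, that the direction being fed into the next rational map is the correct image under the tangent map, so that the resulting expressions line up with the definitions $T_\xi\phi_\tau=T_{\tau(\xi)}\phi\circ T_\xi\tau$ and $m_{\vec v}(\phi_\tau)=m_{T_\xi\tau(\vec v)}(\phi)$ from Section \ref{sec:degree}. The chain rule for tangent maps (Lemma \ref{lem:tangent}) makes this matching automatic, which is why the paper regards the three identities as immediate consequences of the rational case.
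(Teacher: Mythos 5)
Your reduction---expand $\psi_\upsilon\circ\phi_\tau$ as $\psi\circ\upsilon\circ\phi\circ\tau$, apply the rational chain rules, and discard the trivial contributions of the tree automorphisms $\tau,\upsilon$ (each with $\deg=1$, $m=1$, $s=0$)---is exactly what the paper has in mind when it declares this proposition ``easily obtained from the rational case,'' and it handles parts (1) and (2) correctly, since local degree and directional multiplicity are genuinely multiplicative under composition.

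Part (3) is where a real gap opens. You write the rational chain rule for surplus multiplicity as $s_{\vec u}(\psi\circ\upsilon)=s_{T\upsilon(\vec u)}(\psi)+m_{\vec u}(\upsilon)\cdot s_{\vec u}(\upsilon)$ and say explicitly that you are taking this as ``the paper's convention''; but that is just the statement being proved with the twists removed, so this step is circular rather than a derivation. It is also not the correct rational chain rule. For rational maps one has (Faber)
\[
s_{\vec v}(g\circ f)=m_{\vec v}(f)\cdot s_{Tf(\vec v)}(g)+(\deg g)\cdot s_{\vec v}(f),
\]
with $m_{\vec v}(f)$ attached to the outer map's surplus and the global degree $\deg g$ on $s_{\vec v}(f)$. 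The version you quote already fails the sanity check $g=\mathrm{id}$, which would force $s_{\vec v}(f)=m_{\vec v}(f)\,s_{\vec v}(f)$; this breaks whenever $m_{\vec v}(f)>1$ and $s_{\vec v}(f)>0$, e.g.\ $f(z)=(z^3+c)/z$ with $0<|c|<1$ at the Gauss point in the direction toward $0$, where $m=2$ and $s=1$. Feeding the correct rational chain rule through your fourfold decomposition, with $m(\tau)=m(\upsilon)=1$, $s(\tau)=s(\upsilon)=0$, $\deg\tau=\deg\upsilon=1$, yields
\[
s_{\vec v}(\psi_{\upsilon}\circ\phi_\tau)= m_{\vec v}(\phi_\tau)\cdot s_{T_\xi\phi_\tau(\vec v)}(\psi_{\upsilon})+(\deg\psi_\upsilon)\cdot s_{\vec v}(\phi_\tau),
\]
which is what you should be proving; the formula printed in part (3) of the proposition appears to misplace the coefficient $m_{\vec v}(\phi_\tau)$ and to drop the factor $\deg\psi_\upsilon$, so you should not take it at face value as a target.
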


These multiplicities count the number of preimages in the following sense.

\begin{prop}\label{prop:deg-image}

Let $\phi_\tau$ be a skew product of degree at least $1$ and pick $\xi\in\sP^1$. Then for any $\zeta \in \sP^1$, and any  $\vec v \in T_\zeta\sP^1$, if $\xi\in T_\zeta\phi_\tau(\vec{v})$, then $\sB(\vec{v})$ contains $m_{\vec v}(\phi_\tau)+s_{\vec v}(\phi_\tau)$ preimages in the set $\phi_\tau^{-1}(\{\xi\})$; and if  $\xi\not\in T_\zeta\phi_\tau(\vec{v})$, then $\sB(\vec{v})$ contains $s_{\vec v}(\phi_\tau)$ preimages in $\phi_\tau^{-1}(\{\xi\})$.
\end{prop}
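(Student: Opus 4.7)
The plan is to reduce the statement to the analogous known result for honest rational maps (see e.g.\ \cite[Proposition 9.44]{Baker10}) by using that $\phi_\tau=\phi\circ\tau$ and that $\tau$ is a tree automorphism whose tangent maps are bijections.

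First, since $\tau:\sP^1\to\sP^1$ is a homeomorphism (Lemma \ref{lem:fund1}), I would restrict attention to an arbitrary $\zeta\in\sP^1$ and $\vec v\in T_\zeta\sP^1$, set $\zeta':=\tau(\zeta)$ and $\vec v':=T_\zeta\tau(\vec v)\in T_{\zeta'}\sP^1$, and observe that $\tau$ induces a bijection between $\sB(\vec v)$ and $\sB(\vec v')$. Consequently, for any $\xi\in\sP^1$, the map $\tau$ restricts to a bijection
\[
\phi_\tau^{-1}(\{\xi\})\cap\sB(\vec v)\;\longleftrightarrow\;\phi^{-1}(\{\xi\})\cap\sB(\vec v').
\]
Thus it suffices to count the preimages of $\xi$ under the ordinary rational map $\phi$ inside $\sB(\vec v')$.

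Next, I would record the identifications of the multiplicities coming directly from the definitions in Section \ref{sec:degree}:
\[
m_{\vec v}(\phi_\tau)=m_{\vec v'}(\phi),\qquad s_{\vec v}(\phi_\tau)=s_{\vec v'}(\phi),
\]
together with $T_\zeta\phi_\tau(\vec v)=T_{\zeta'}\phi(\vec v')$. In particular the hypothesis "$\xi\in T_\zeta\phi_\tau(\vec v)$" translates verbatim into "$\xi\in T_{\zeta'}\phi(\vec v')$" (interpreted as $\xi$ lying in the Berkovich open disk $\sB(T_{\zeta'}\phi(\vec v'))$).

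Now I would apply the rational case. By \cite[Proposition 9.44]{Baker10}, the disk $\sB(\vec v')$ contains exactly $m_{\vec v'}(\phi)+s_{\vec v'}(\phi)$ preimages of $\xi$ under $\phi$ when $\xi\in\sB(T_{\zeta'}\phi(\vec v'))$, and exactly $s_{\vec v'}(\phi)$ preimages otherwise. Combining this with the bijection in the first step and the identifications in the second step yields the claim for $\phi_\tau$. The main (mild) obstacle is purely notational: verifying carefully that the definitions of $m_{\vec v}(\phi_\tau),\,s_{\vec v}(\phi_\tau),\,T_\zeta\phi_\tau(\vec v)$ via twisting by $T_\zeta\tau$ are consistent, so that the rational-case hypothesis matches the twisted-case hypothesis on the nose; no new analytic input is required.
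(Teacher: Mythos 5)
Your proposal is correct, and it is exactly what the paper has in mind: the paper explicitly omits the proofs of Propositions \ref{prop:deg-basic}--\ref{prop:deg-image} because they are ``easily obtained from the rational case,'' and your argument spells out that reduction. Writing $\phi_\tau=\phi\circ\tau$, noting that $\tau$ maps $\sB(\vec v)$ bijectively onto $\sB(\vec v')$ with $\vec v'=T_\zeta\tau(\vec v)$ (Lemma \ref{lem:fund1}), hence $\phi_\tau^{-1}(\{\xi\})\cap\sB(\vec v)=\tau^{-1}\bigl(\phi^{-1}(\{\xi\})\cap\sB(\vec v')\bigr)$, and observing that $m_{\vec v}(\phi_\tau)=m_{\vec v'}(\phi)$, $s_{\vec v}(\phi_\tau)=s_{\vec v'}(\phi)$, $T_\zeta\phi_\tau(\vec v)=T_{\tau(\zeta)}\phi(\vec v')$ hold by the very definitions in Section \ref{sec:degree}, the claim is the verbatim transport of the rational-map statement. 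No gap; this is the intended route.
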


The ramification locus $\aR(\phi)$ of a nonconstant rational map $\phi\in K(z)$ is defined in  \cite{Trucco14, Faber13I}, see also \cite[Section 7.6]{Ben19}. They can be defined in the same way for twisted rational maps. 

\begin{defn}
 Let $\phi_\tau$ be a twisted rational map with $\deg\phi_\tau\ge 1$. The \emph{ramification locus} of $\phi_\tau$ is the set
 $$\aR(\phi_\tau):= \{\zeta \in \sP^1: \deg_\zeta\phi_\tau\ge 2\}.$$
 \end{defn}
A \emph{(classical) critical point} of $\phi_\tau$ is a point $x\in\bP^1\subset\sP^1$ at which $\phi_\tau$ is not locally injective. Denote by $\Crit(\phi_\tau)$ the set of classical critical points of $\phi_\tau$. Then  $\Crit(\phi_\tau)\subset\aR(\phi_\tau)$.

\begin{lem}\label{lem:ram}
Let $\phi_\tau$ be  any twisted rational map of degree at least $1$. Then $\aR(\phi_\tau) = \tau^{-1}(\aR(\phi))$ and $\Crit(\phi_\tau) = \aR(\phi_\tau) \cap \bP^1=\tau^{-1}(\Crit(\phi))$.
\end{lem}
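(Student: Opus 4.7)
The plan is to unpack everything straight from the definitions given in Section \ref{sec:degree} and use the fact that $\tau$ is a bijection of $\sP^1$ preserving types (Lemma \ref{lem:fund1}).

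First I would handle the ramification locus. By definition $\deg_\xi \phi_\tau = \deg_{\tau(\xi)}\phi$, so the condition $\deg_\xi \phi_\tau \geq 2$ is literally equivalent to $\tau(\xi) \in \aR(\phi)$, i.e.\ $\xi \in \tau^{-1}(\aR(\phi))$. This immediately yields the first identity $\aR(\phi_\tau) = \tau^{-1}(\aR(\phi))$.

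Next I would address the classical critical points. Since $\tau$ restricts to a homeomorphism of $\bP^1$ onto itself (by Lemma \ref{lem:fund1} applied to type I points, where $r=0$ forces $r^{\lambda_\tau} = 0$), the composition $\phi_\tau = \phi\circ\tau$ fails to be locally injective at a classical point $x\in\bP^1$ if and only if $\phi$ fails to be locally injective at $\tau(x)\in\bP^1$. Hence $\Crit(\phi_\tau) = \tau^{-1}(\Crit(\phi))$. Combining with the classical fact that $\Crit(\phi) = \aR(\phi)\cap\bP^1$ (which in turn follows because for $x\in\bP^1$ the local degree $\deg_x \phi$ coincides with the algebraic multiplicity of $\phi$ at $x$, and this is $\geq 2$ precisely when $\phi$ is not locally injective at $x$), I get
\[
\Crit(\phi_\tau) = \tau^{-1}(\Crit(\phi)) = \tau^{-1}(\aR(\phi))\cap \tau^{-1}(\bP^1) = \aR(\phi_\tau)\cap\bP^1,
\]
where the last equality uses $\tau(\bP^1)=\bP^1$ together with the first identity.

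There is no real obstacle: everything is a bookkeeping exercise once the tangent/degree definitions for $\phi_\tau$ are recorded. The only point that deserves a line of justification is the identification $\Crit(\phi) = \aR(\phi)\cap \bP^1$ for the underlying rational map $\phi$, which is standard (see e.g.\ \cite{Faber13I} or \cite[Section 7.6]{Ben19}) and is the reason the definition of $\Crit$ is phrased in terms of non-injectivity while $\aR$ is phrased via $\deg_\xi\geq 2$.
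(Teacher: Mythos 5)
Your proof is correct, and since the paper omits the proof of this lemma entirely (treating it as an immediate consequence of the definitions in Section~\ref{sec:degree}), your argument is essentially what the authors had in mind: unwind $\deg_\xi\phi_\tau := \deg_{\tau(\xi)}\phi$ for the first identity, note that precomposition with the homeomorphism $\tau$ of $\bP^1$ preserves failure of local injectivity for the critical-point identity, and invoke the standard fact $\Crit(\phi) = \aR(\phi)\cap\bP^1$ for the underlying rational map to tie the two together.
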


The ramification locus $\aR(\phi)$ has been well studied by Faber in \cite{Faber13I, Faber13II}. Due to Lemma \ref{lem:ram} (1), all the topological results on $\aR(\phi)$ hold for $\aR(\phi_\tau)$. In general, $\aR(\phi_\tau)$ may not be contained in the convex hull $\Hull(\Crit(\phi_\tau))$ of $\Crit(\phi_\tau)$. As in the rational case, following \cite{Trucco14}, we define the notion of tameness:
\begin{defn}\label{def:tame}
 Let $\phi_\tau$ be a twisted rational map with $\deg\phi_\tau\ge 1$. We say that $\phi_\tau$ is \emph{tame} if  $\aR(\phi_\tau)\subset\Hull(\Crit(\phi_\tau))$.
\end{defn}
Let $p\ge 0$ be the residue characteristic of $K$. If $p=0$ or $\deg\phi_\tau<p$, then $\phi_\tau$ is tame, see \cite[Corollary 6.6]{Faber13I}.
 
We end the section with a corollary of Proposition \ref{prop:multi-basic} (1). 

\begin{prop}\label{prop:fixedinterval}
Let $\phi_\tau$ be a twisted rational map of degree at least $1$. Pick two points $\zeta, \xi \in \sP^1$. Assume that $\phi_\tau$ is injective and has constant local degree $d\ge 1$ on $]\zeta,\xi[$. 
Then for any $\xi_1,\xi_2\in[\zeta,\xi]\cap\sH^1$,
 \begin{align}\label{eq:dis}
 \rho(\phi(\xi_1) , \phi(\xi_2)) = \lambda_\tau d\cdot \rho(\xi_1 , \xi_2).
 \end{align}
In particular, if $[\zeta, \xi]\cap \aR(\phi_\tau)=\emptyset$, then
 \[\phi_\tau : [\zeta, \xi] \to  [\phi(\zeta), \phi(\xi)]\] is a linear homeomorphism and  for any $\xi_1,\xi_2\in[\zeta,\xi]\cap\sH^1$,
 \[\rho(\phi(\xi_1) , \phi(\xi_2)) = \lambda_\tau\cdot \rho(\xi_1 , \xi_2)\]
\end{prop}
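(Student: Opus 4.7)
The plan is to deduce the global expansion formula from the local isometric statement of Proposition~\ref{prop:multi-basic}(1) by first showing that the relevant directional multiplicity is identically $d$ along the segment, and then patching finitely many local linear pieces by compactness of $[\xi_1,\xi_2]$ in the hyperbolic metric.

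First, I would prove that $m_{\vec v}(\phi_\tau) = d$ for every type II point $p \in \,]\zeta, \xi[$ and every direction $\vec v \in \{\vec v_p(\zeta), \vec v_p(\xi)\}$ along the segment. The tangent map $T_p\phi_\tau$ is a rational self-map of $\bP^1_{\tilde K}$ of degree $\deg_p\phi_\tau = d$. Injectivity on $]\zeta, \xi[$ together with Proposition~\ref{prop:multi-basic}(1) applied in the two segment directions first shows $T_p\phi_\tau(\vec v_p(\zeta)) \neq T_p\phi_\tau(\vec v_p(\xi))$: otherwise the images of short initial sub-segments would overlap near $\phi_\tau(p)$ and give distinct $q_1 \in \,]\zeta, p[$ and $q_2 \in \,]p, \xi[$ with $\phi_\tau(q_1) = \phi_\tau(q_2)$. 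To force total ramification of $T_p\phi_\tau$ at $\vec v$, pick $q \in \,]p, \xi[$ close to $p$, so that $\deg_q\phi_\tau = d$; Proposition~\ref{prop:deg-image} then counts the preimages of $\phi_\tau(q)$ inside $\sB(\vec v)$ with multiplicity as $m_{\vec v}(\phi_\tau) + s_{\vec v}(\phi_\tau)$. Combining this with the identity $\sum_{\vec v'} s_{\vec v'}(\phi_\tau) = \deg\phi_\tau - d$ from Proposition~\ref{prop:multi-basic}(3) and the Hurwitz-type identity $\sum_{\vec v': T_p\phi_\tau(\vec v') = T_p\phi_\tau(\vec v)} m_{\vec v'}(\phi_\tau) = d$, and using the constancy of the local degree along the whole open segment to rule out any other direction $\vec v'$ with $T_p\phi_\tau(\vec v') = T_p\phi_\tau(\vec v)$, one obtains $m_{\vec v}(\phi_\tau) = d$. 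Type III points are handled identically, the tangent space having only two directions.

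With $m_{\vec v}(\phi_\tau) = d$ established along $]\zeta, \xi[$, Proposition~\ref{prop:multi-basic}(1) provides, at each interior point, a sub-segment of $[\zeta, \xi]$ on which $\phi_\tau$ is a linear homeomorphism expanding the hyperbolic distance by the factor $\lambda_\tau d$. For $\xi_1, \xi_2 \in [\zeta, \xi] \cap \sH^1$ with, say, $\xi_1 \in [\zeta, \xi_2]$, the closed sub-segment $[\xi_1, \xi_2]$ is compact in the hyperbolic metric; extracting a finite cover by such local sub-segments and summing using the additivity of $\rho$ along a segment gives $\rho(\phi_\tau(\xi_1), \phi_\tau(\xi_2)) = \lambda_\tau d \cdot \rho(\xi_1, \xi_2)$. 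For the ``in particular'' statement, the assumption $[\zeta, \xi] \cap \aR(\phi_\tau) = \emptyset$ forces $\deg_p\phi_\tau = 1$ for every $p \in [\zeta, \xi]$, so each tangent map $T_p\phi_\tau$ is an automorphism of $\bP^1_{\tilde K}$; Proposition~\ref{prop:multi-basic}(1) with $m_{\vec v} = 1$ then gives that $\phi_\tau$ is a local homeomorphism expanding distances by $\lambda_\tau$, and the same patching argument upgrades this to a global linear homeomorphism $\phi_\tau\colon [\zeta, \xi] \to [\phi_\tau(\zeta), \phi_\tau(\xi)]$, yielding both injectivity on $[\zeta, \xi]$ and the expansion formula with $d=1$.

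The hardest step is the first: showing that $T_p\phi_\tau$ is totally ramified in the segment direction at every interior type II point. Injectivity on $]\zeta, \xi[$ alone does not a priori exclude an off-segment direction $\vec v'$ with $T_p\phi_\tau(\vec v') = T_p\phi_\tau(\vec v)$, because a preimage of $\phi_\tau(q)$ lying in $\sB(\vec v')$ need not itself belong to $]\zeta, \xi[$; one must exploit the global-to-local degree bookkeeping of Propositions~\ref{prop:multi-basic}(3) and~\ref{prop:deg-image} together with the constancy of $\deg_q\phi_\tau$ for all $q$ on the whole open segment, or equivalently the annulus image formula of Lemma~\ref{thm:imgtypeII}, to rule out such a $\vec v'$.
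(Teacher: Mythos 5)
Your overall plan matches the paper's: reduce (2.3) to showing that the directional multiplicity along the segment equals $d$, then invoke Proposition~\ref{prop:multi-basic}(1) and patch the local sub-segments. The paper dispatches the multiplicity statement in a single clause (``Also observe that \dots the map $\phi_\tau$ has directional multiplicity $d$ at the direction $\vec v_{\zeta_1}(\xi)$'') and your deduction that $T_p\phi_\tau(\vec v_p(\zeta))\neq T_p\phi_\tau(\vec v_p(\xi))$ from injectivity is correct, as is the compactness patching and your treatment of the ``in particular'' clause (which the paper instead handles by a direct contradiction with $\aR(\phi_\tau)$).

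However, the key step has a genuine gap, and you flag it yourself without closing it. Your attempt to force $m_{\vec v}(\phi_\tau)=d$ combines Propositions~\ref{prop:multi-basic}(3) and~\ref{prop:deg-image} with the Hurwitz identity for $T_p\phi_\tau$ --- but all of those are statements \emph{at the single point $p$}. They give $m_{\vec v}+s_{\vec v}\ge d$ and $m_{\vec v}\le d$, and they tell you that if $m_{\vec v}<d$ then some off-segment $\vec v'$ maps to the same tangent direction. Nothing in that bookkeeping rules such a $\vec v'$ out, because a preimage of $\phi_\tau(q)$ lying in $\sB(\vec v')$ is disjoint from $]\zeta,\xi[$ and therefore does not contradict injectivity; your phrase ``one must exploit \dots to rule out such a $\vec v'$'' is an acknowledgment, not an argument. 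The missing input is precisely the comparison with a \emph{nearby point on the segment}: by Proposition~\ref{prop:multi-basic}(1) there is a non-degenerate sub-segment $[p,q_0]\subset[p,\xi]$ on which the expansion factor is constant $\lambda_\tau m_{\vec v}(\phi_\tau)$; picking a type III point $q\in\,]p,q_0[$ one has $\deg_q\phi_\tau=m_{\vec u}(\phi_\tau)$ in both directions $\vec u\in T_q\sP^1$ (at a type III point the tangent space has two elements mapping to distinct directions, so the Hurwitz identity collapses to a single term), and the constancy of expansion on $[p,q_0]$ forces $m_{\vec v_q(p)}(\phi_\tau)=m_{\vec v}(\phi_\tau)$; since $\deg_q\phi_\tau=d$ by hypothesis, this yields $m_{\vec v}(\phi_\tau)=d$. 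Equivalently, one may appeal directly to the annulus image formula of Lemma~\ref{thm:imgtypeII} as you suggest, but that route also requires identifying the Weierstrass degree on the annulus with the constant local degree $d$, which is the same type III (or generic-point) comparison. Without one of these inputs your argument does not close.
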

\begin{proof}
To show \eqref{eq:dis}, note that $\phi_\tau$ is injective on $]\zeta,\xi[$, and hence it maps $[\zeta,\xi]$ bijectively to $[\phi_\tau(\zeta),\phi_\tau(\xi)]$. Also observe that for any $\zeta_1\in[\zeta,\xi[$, the map $\phi_\tau$ has directional multiplicity $d$ at the direction $\vec{v}_{\zeta_1}(\xi)\in T_{\zeta_1}\sP^1$. 
Then \eqref{eq:dis} follows from Proposition \ref{prop:multi-basic} (1). 

Now consider the case where $[\zeta, \xi]\cap \aR(\phi_\tau)=\emptyset$. To see  $\phi_\tau : [\zeta, \xi] \to  [\phi_\tau(\zeta), \phi_\tau(\xi)]$ is a homeomorphism, it suffices to show that $\phi_\tau$ is injective on $[\zeta, \xi]$. Suppose on the contrary that $\phi_\tau$ is not injective on $[\zeta, \xi]$. Then there exist two distinct points in $[\zeta, \xi]$ having the same image under $\phi_\tau$. This implies the existence of $\xi'\in[\zeta, \xi]$ such that $\phi_\tau$ is not injective on any segment $I\subset ]\zeta, \xi[$ containing $\xi'$. Hence $\xi'\in\aR(\phi_\tau)$, which contradicts that $[\zeta, \xi]\cap \aR(\phi_\tau)=\emptyset$. The remaining assertions follow immediately. 
\end{proof}

\section{Julia and Fatou sets}

Let $\phi_\tau$ be a twisted rational map with $\deg\phi_\tau\ge 1$. In this section we introduce the Fatou/Julia set of $\phi_\tau$ and classify its periodic points.

\subsection{Fatou and Julia sets} 

For Fatou and Julia sets in $\sP^1$, we take the standard definition in Berkovich dynamics:
\begin{defn}
The \emph{(Berkovich) Fatou set} of $\phi_\tau$, denoted by $\F(\phi_\tau)$, is the subset of $\sP^1$ consisting of all points $\zeta \in \sP^1$ having a neighborhood  $U \subseteq \sP^1$  such that $\cup_{n \ge 0} \phi_\tau^n(U)$ omits infinitely many points of $\sP^1$. The \emph{(Berkovich) Julia set} of $\phi_\tau$, denoted by $\J(\phi_\tau)$, is the complement $\sP^1 \sm \F(\phi_\tau)$.
\end{defn}

For Fatou and Julia sets in $\bP^1$, we use the classical definition with equicontinuity. Denote by $\sigma$ the spherical distance on $\bP^1$. 

\begin{defn}
The \emph{(classical) Fatou set} of $\phi_\tau$, denoted by $\F_I(\phi_\tau)$, is the subset of $\bP^1$ consisting of all points $x \in \bP^1$ having a neighborhood on which the family of iterates $\{\phi_\tau^n\}_{n\ge 0}$ is equicontinuous with respect to $\sigma$. The \emph{(classical) Julia set} of $\phi_\tau$, denoted by $\J_I(\phi_\tau)$, is the complement $\bP^1 \sm \F_I(\phi_\tau)$.
\end{defn}

We first observe that $\phi_\tau$ is H\"older continuous on $(\bP^1,\sigma)$:
\begin{lem}\label{lem:holder}
Let $\phi_\tau$ be a twisted rational map with degree at least $1$. Then there exists $C\ge1$ such that for any $x,y\in\bP^1$, 
$$\sigma(\phi_\tau(x),\phi_\tau(y))\le C\sigma(x,y)^{\lambda_\tau}.$$
Moreover, if $\lambda_\tau\ge 1$, we have 
$$\sigma(\phi_\tau(x),\phi_\tau(y))\le C\sigma(x,y).$$
\end{lem}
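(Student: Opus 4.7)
The plan is to decompose $\phi_\tau = \phi \circ \tau$ and establish the required continuity estimate for each factor separately, then compose. The key point is that $\tau$ behaves like a H\"older map of exponent $\lambda_\tau$ with constant $1$ on $(\bP^1,\sigma)$, while $\phi$ is Lipschitz by the classical theory of rational maps.

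First I would handle $\tau$. Recall that the (non-archimedean) spherical distance on $\bP^1$ is given, in affine coordinates, by
\[
\sigma(x,y) = \frac{|x-y|}{\max(1,|x|)\max(1,|y|)},\qquad \sigma(x,\infty)=\frac{1}{\max(1,|x|)}.
\]
Since $\tau$ is a field automorphism fixing $\infty$, we have $\tau(x)-\tau(y)=\tau(x-y)$, so by \eqref{eq:fieldautomorphismcondition} both $|\tau(x)-\tau(y)|=|x-y|^{\lambda_\tau}$ and $\max(1,|\tau(x)|)=\max(1,|x|)^{\lambda_\tau}$ (because $t\mapsto t^{\lambda_\tau}$ is monotone on $\R_{\ge 0}$). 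A direct substitution gives the sharp identity
\[
\sigma(\tau(x),\tau(y)) = \sigma(x,y)^{\lambda_\tau}\quad\text{for all } x,y\in\bP^1,
\]
where the case involving $\infty$ is checked in the same way.

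Next, I invoke the classical fact that a non-constant rational map $\phi\in K(z)$ is Lipschitz on $(\bP^1,\sigma)$: there exists $C_\phi\ge 1$ such that $\sigma(\phi(u),\phi(v))\le C_\phi\sigma(u,v)$ for all $u,v\in\bP^1$ (see e.g.\ Baker--Rumely \cite{Baker10}). Composing with the previous step,
\[
\sigma(\phi_\tau(x),\phi_\tau(y)) = \sigma\bigl(\phi(\tau(x)),\phi(\tau(y))\bigr)\le C_\phi\,\sigma(\tau(x),\tau(y))=C_\phi\,\sigma(x,y)^{\lambda_\tau},
\]
which proves the first assertion with $C:=C_\phi$. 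For the second assertion, I use that the spherical metric is bounded by $1$, so when $\lambda_\tau\ge 1$ we have $\sigma(x,y)^{\lambda_\tau}\le\sigma(x,y)$, and the same constant $C$ works.

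I do not expect a real obstacle: the only delicate point is verifying the identity $\sigma(\tau(x),\tau(y))=\sigma(x,y)^{\lambda_\tau}$, which requires using that the $\lambda_\tau$-th power commutes with $\max(1,\cdot)$ on absolute values; the rest is just invoking the Lipschitz property of $\phi$ and composing.
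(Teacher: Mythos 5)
Your proof is correct and follows essentially the same route as the paper: decompose $\phi_\tau=\phi\circ\tau$, establish the sharp identity $\sigma(\tau(x),\tau(y))=\sigma(x,y)^{\lambda_\tau}$ from the formula for the spherical metric, and compose with the Lipschitz estimate for the rational map $\phi$ (the paper cites Morton--Silverman where you cite Baker--Rumely, but it is the same standard fact). The final reduction in the case $\lambda_\tau\ge 1$ via $\sigma\le 1$ is also exactly what the paper does.
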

\begin{proof}
 It follows from the Lipschitz continuity of rational maps \cite[Proposition 5.2]{Morton95} that there exists $C\ge 1$ such that 
 $$\sigma(\phi_\tau(x),\phi_\tau(y))\le C\sigma(\tau(x),\tau(y)).$$
 Now we consider $\sigma(\tau(x),\tau(y))$. Changing coordinates if necessary, we can assume that $\tau(x), \tau(y)\in K$. It follows that 
 $$\sigma(\tau(x),\tau(y))=\frac{|\tau(x)-\tau(y)|}{\max\{1,|\tau(x)|\}\max\{1,|\tau(y)|\}}=\frac{|x-y|^{\lambda_\tau}}{\max\{1,|x|^{\lambda_\tau}\}\max\{1,|y|^{\lambda_\tau}\}}=\sigma(x,y)^{\lambda_\tau}.$$

If $\lambda\ge 1$ then the assertion follows immediately since $\sigma(x,y)\le 1$
\end{proof}

Applying the same arguments as for rational maps (\cite[Proposition 8.2]{Ben19}), we immediately obtain the following statements about $\F(\phi_\tau)$ and $\J(\phi_\tau)$:
\begin{prop}\label{prop:fatoujuliabasics}
 Let $\phi_\tau$ be a twisted rational map with $\deg\phi_\tau\ge 1$. Then the following hold:
\begin{enumerate}
 \item $\F(\phi_\tau)$ is open and $\J(\phi_\tau)$ is closed.
 \item $\phi_\tau^{-1}(\F(\phi_\tau))=\F(\phi_\tau)= \phi_\tau(\F(\phi_\tau))$ and  $\phi_\tau^{-1}(\J(\phi_\tau))=\J(\phi_\tau)= \phi_\tau(\J(\phi_\tau))$.
 \item For every integer $m \ge 1$, $\F(\phi_\tau^m)= \F(\phi_\tau)$ and $\J(\phi_\tau^m)= \J(\phi_\tau)$.
 \item For any $\eta \in \PGL(2, K)$, if we set $\psi_\tau= \eta \circ\phi_\tau\circ \eta^{-1}$, then
 \[\F(\psi_\tau)= \eta(\F(\phi_\tau)) \quad \text{and} \quad \J(\psi_\tau)=\eta(\J(\phi_\tau)).\]
\end{enumerate}
\end{prop}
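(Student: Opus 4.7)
The plan is to imitate the standard arguments in Berkovich rational dynamics (as in \cite[Proposition 8.2]{Ben19}), using that every essential property transfers to the twisted setting: $\phi_\tau$ is continuous, open (Lemma \ref{cor:open}), surjective on $\sP^1$ (since $\phi$ has degree at least $1$ and $\tau \in \Aut^*(K)$), and has finite preimages with $|\phi_\tau^{-1}(y)| \le d$ (Proposition \ref{prop:deg-basic}). Part (1) is immediate from the definition: $\F(\phi_\tau)$ is a union of open sets, hence open, and $\J(\phi_\tau)$ is its complement.

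For (2), the forward invariance of $\F(\phi_\tau)$ relies on openness of $\phi_\tau$: if an open set $U$ witnesses $\zeta \in \F(\phi_\tau)$, then $\phi_\tau(U)$ is an open neighborhood of $\phi_\tau(\zeta)$, and $\bigcup_{n \ge 0}\phi_\tau^n(\phi_\tau(U)) = \bigcup_{n \ge 1}\phi_\tau^n(U)$ still omits an infinite set (after possibly shrinking $U$ to a Berkovich open disk so that the omitted infinite set is not entirely contained in $U$). Backward invariance relies on continuity: if $V$ witnesses $\phi_\tau(\zeta) \in \F(\phi_\tau)$, then $U := \phi_\tau^{-1}(V)$ is an open neighborhood of $\zeta$, and $\bigcup_{n \ge 0}\phi_\tau^n(U) \subseteq U \cup \bigcup_{n \ge 0}\phi_\tau^n(V)$ still omits infinitely many points (again shrinking $U$ if needed). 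The corresponding statements for $\J(\phi_\tau)$ then follow by taking complements and using the surjectivity of $\phi_\tau$.

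For (3), the inclusion $\F(\phi_\tau) \subseteq \F(\phi_\tau^m)$ is immediate since any witness for $\phi_\tau$ is a witness for $\phi_\tau^m$. For the reverse, given $\zeta \in \F(\phi_\tau^m)$, I apply (2) to $\phi_\tau^m$ to see each $\phi_\tau^r(\zeta)$ lies in $\F(\phi_\tau^m)$ for $r = 0, \ldots, m-1$. Choose witness neighborhoods $U_r \ni \phi_\tau^r(\zeta)$ with infinite omitted sets $S_r$, and set $V := \bigcap_{r=0}^{m-1}\phi_\tau^{-r}(U_r)$, an open neighborhood of $\zeta$ by continuity. Writing $n = mq + r$ with $0 \le r < m$, the inclusion $\phi_\tau^n(V) \subseteq \phi_\tau^{mq}(U_r)$ yields that $\bigcup_n\phi_\tau^n(V)$ omits $\bigcap_{r=0}^{m-1} S_r$. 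The corresponding identity for $\J$ then follows by taking complements. For (4), using $\psi_\tau^n = \eta \circ \phi_\tau^n \circ \eta^{-1}$ and that $\eta \in \PGL(2,K)$ is a self-homeomorphism of $\sP^1$, one sees that $\eta$ transports witnesses for $\F(\phi_\tau)$ bijectively to witnesses for $\F(\psi_\tau)$.

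The principal obstacle lies in (3): arranging that $\bigcap_{r=0}^{m-1} S_r$ is infinite, which is not automatic for arbitrary choices. The remedy is to exploit the freedom in the definition of Fatou, namely that if $\bigcup_q \phi_\tau^{mq}(W)$ omits some infinite set for a neighborhood $W$, it also omits any infinite subset thereof. Combined with the degree bound in Proposition \ref{prop:deg-basic} (which guarantees that preimages of infinite sets under iterates of $\phi_\tau$ remain infinite), one may begin from a suitably sparse common infinite subset of $\sP^1$ and then refine the witnesses $U_r$ to each omit this common set, producing the desired infinite $\bigcap_{r=0}^{m-1} S_r$.
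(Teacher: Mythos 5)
Your parts (1), (2), and (4) are sound and essentially the argument the paper has in mind (the paper itself gives no proof, citing only that the arguments of \cite[Proposition 8.2]{Ben19} transport verbatim). For part (2), the remark about ``shrinking $U$'' in the backward direction is unnecessary and slightly imprecise: the cleaner observation is that if $E := \sP^1\setminus\bigcup_{n\ge 0}\phi_\tau^n(V)$ is infinite, then $E$ is backward invariant ($\phi_\tau^{-1}(E)\subseteq E$) and, since $\phi_\tau$ is surjective with fibers of size at most $d$, $\phi_\tau^{-1}(E)$ is also infinite. Every $e\in\phi_\tau^{-1}(E)$ satisfies $\phi_\tau(e)\in E\subseteq V^c$, hence $e\notin\phi_\tau^{-1}(V)=U$; so $\phi_\tau^{-1}(E)\subseteq E\cap U^c = \sP^1\setminus\bigcup_{n\ge 0}\phi_\tau^n(U)$ is infinite, with no case distinction needed.

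There is a genuine gap in part (3). You write that ``applying (2) to $\phi_\tau^m$'' shows each $\phi_\tau^r(\zeta)$, $r=0,\dots,m-1$, lies in $\F(\phi_\tau^m)$. But (2) applied to the twisted rational map $\phi_\tau^m$ only yields complete invariance of $\F(\phi_\tau^m)$ under $\phi_\tau^m$, i.e.\ $\phi_\tau^{m}(\zeta), \phi_\tau^{2m}(\zeta),\dots\in\F(\phi_\tau^m)$; it says nothing about the intermediate iterates $\phi_\tau(\zeta),\dots,\phi_\tau^{m-1}(\zeta)$. The statement you need --- that $\F(\phi_\tau^m)$ is forward invariant under $\phi_\tau$ itself --- is essentially equivalent to what you are trying to prove. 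The subsequent ``refine to a sparse common infinite subset'' remedy for $\bigcap_r S_r$ also does not constitute a proof: there is no mechanism guaranteeing that witnesses $U_r$ can be chosen so that each orbit $\bigcup_q\phi_\tau^{mq}(U_r)$ avoids a prescribed infinite set. The standard way around both issues (for $\deg\phi_\tau\ge 2$, where the exceptional set from Lemma \ref{lem:exception} is finite) is to show first that for $\zeta\in\J(\phi_\tau)$ the omitted set $\sP^1\setminus\bigcup_n\phi_\tau^n(U)$ is backward invariant, hence contained in the finite exceptional set $E(\phi_\tau)=E(\phi_\tau^m)$, and then derive the equality $\J(\phi_\tau)=\J(\phi_\tau^m)$ from that; your argument as written does not engage with this mechanism and the gap would need to be filled.
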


Now we show that the above properties also hold for 
$\F_I(\phi_\tau)$ and $\J_I(\phi_\tau)$, which is an analogue of \cite[Proposition 5.10]{Ben19}.
\begin{prop}\label{prop:fatoujuliabasics1}
 Let $\phi_\tau$ be a twisted rational map with $\deg\phi_\tau\ge 1$. Then the following hold:
\begin{enumerate}
 \item $\F_I(\phi_\tau)$ is open and $\J_I(\phi_\tau)$ is closed.
 \item $\phi_\tau^{-1}(\F_I(\phi_\tau))=\F_I(\phi_\tau)= \phi_\tau(\F_I(\phi_\tau))$ and  $\phi_\tau^{-1}(\J_I(\phi_\tau))=\J_I(\phi_\tau)= \phi_\tau(\J_I(\phi_\tau))$.
 \item For every integer $m \ge 1$, $\F_I(\phi_\tau^m)= \F_I(\phi_\tau)$ and $\J_I(\phi_\tau^m)= \J_I(\phi_\tau)$.
 \item For any $\eta \in \PGL(2, K)$, if we set $\psi_\tau:= \eta \circ\phi_\tau\circ \eta^{-1}$, then
 \[\F_I(\psi_\tau)= \eta(\F_I(\phi_\tau)) \quad \text{and} \quad \J_I(\psi_\tau)=\eta(\J_I(\phi_\tau)).\]
\end{enumerate}
\end{prop}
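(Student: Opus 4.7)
The strategy will parallel the classical argument for rational maps (see e.g.\ \cite[Proposition 5.10]{Ben19}). The two essential inputs are Lemma \ref{lem:holder}, which makes $\phi_\tau$ H\"older continuous on $(\bP^1,\sigma)$ with exponent $\lambda_\tau$, and the fact that $\phi_\tau=\phi\circ\tau$ is a continuous, open, surjective self-map of $\bP^1$ whenever $\deg\phi_\tau\ge 1$ (since $\tau$ is a homeomorphism and a non-constant rational map on $\bP^1$ is open and surjective). Item (1) will be immediate from the definition of equicontinuity at a point, which already requires equicontinuity on some open neighborhood of $x$; this neighborhood is then contained in $\F_I(\phi_\tau)$. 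Item (4) will follow at once from the fact that any $\eta\in\PGL(2,K)$ is a bi-Lipschitz homeomorphism of $(\bP^1,\sigma)$, so equicontinuity of $\{\phi_\tau^n\}$ at $x$ transfers to equicontinuity of $\{\psi_\tau^n\}=\{\eta\circ\phi_\tau^n\circ\eta^{-1}\}$ at $\eta(x)$.

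The crux is the total invariance $\phi_\tau^{-1}(\F_I(\phi_\tau))=\F_I(\phi_\tau)$ in item (2). For $\F_I(\phi_\tau)\subseteq\phi_\tau^{-1}(\F_I(\phi_\tau))$ my plan is to exploit openness: if $\{\phi_\tau^n\}$ is equicontinuous on a neighborhood $U$ of $x$, then $\phi_\tau(U)$ is a neighborhood of $\phi_\tau(x)$, and every $y\in\phi_\tau(U)$ admits a preimage $z\in U$ with $\phi_\tau^n(y)=\phi_\tau^{n+1}(z)$, so applying the equicontinuity estimate at $x$ with the shifted index $n+1$ produces the desired equicontinuity estimate at $\phi_\tau(x)$ for index $n$. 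For the reverse inclusion I will use H\"older continuity: given $\varepsilon>0$, first choose $\delta>0$ from equicontinuity at $\phi_\tau(x)$ and then choose $\delta'\le\varepsilon$ small enough that $\phi_\tau$ sends $B(x,\delta')$ into $B(\phi_\tau(x),\delta)$; the bound $\sigma(\phi_\tau^n(x),\phi_\tau^n(z))<\varepsilon$ will then hold trivially for $n=0$ and follow for $n\ge 1$ by applying the chosen equicontinuity at $\phi_\tau(x)$ with index $n-1$. The equality $\F_I(\phi_\tau)=\phi_\tau(\F_I(\phi_\tau))$ will then be a formal consequence of total invariance combined with surjectivity of $\phi_\tau$.

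For item (3), the inclusion $\F_I(\phi_\tau)\subseteq \F_I(\phi_\tau^m)$ is automatic since $\{(\phi_\tau^m)^n\}_n$ is a subfamily of $\{\phi_\tau^n\}_n$. For the reverse, my plan is the standard fill-in-the-gaps argument: assuming $\{\phi_\tau^{mn}\}_n$ is equicontinuous at $x$, for each $j\in\{0,1,\dots,m-1\}$ the H\"older continuity of $\phi_\tau^j$ (which is itself a twisted rational map, hence continuous by Lemma \ref{lem:holder}) will promote this to equicontinuity of $\{\phi_\tau^{mn+j}\}_n$ at $x$; taking the union over $j$ yields equicontinuity of the full family $\{\phi_\tau^n\}_{n\ge 0}$ at $x$.

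The only mild subtlety worth flagging is that when $\lambda_\tau<1$ we have H\"older rather than Lipschitz continuity, but I do not expect this to create any real difficulty: the openness half of item (2) needs no metric estimate at all, and H\"older continuous maps preserve equicontinuity under both pre- and post-composition. Hence the whole argument transports from the rational case with only cosmetic changes, and the entire proof amounts to checking that the two ingredients (H\"older continuity of $\phi_\tau$ and openness of $\phi_\tau$ on $\bP^1$) remain true in the twisted setting, which we have already arranged.
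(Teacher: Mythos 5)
Your proposal is correct and takes essentially the same approach as the paper's own proof: Hölder continuity via Lemma \ref{lem:holder} is used to pull equicontinuity back along $\phi_\tau$ for the inclusion $\phi_\tau^{-1}(\F_I(\phi_\tau))\subseteq\F_I(\phi_\tau)$, openness of $\phi_\tau$ (Lemma \ref{cor:open}) is used for the forward direction, and a subfamily/fill-in argument gives item (3). Your per-residue treatment of item (3), which handles each $j\in\{0,\dots,m-1\}$ separately and takes a minimum over finitely many $\delta$'s, is a mild streamlining of the paper's explicit case split on $\lambda_\tau\ge 1$ versus $0<\lambda_\tau<1$ via the uniform exponent $\lambda_\tau^m$, but the content is identical.
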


\begin{proof}

Statements (1) and (4) are straightforward.  Let us show statements (2) and (3). For statement (2), it suffices to show $\phi_\tau^{-1}(\F_I(\phi_\tau))=\F_I(\phi_\tau)= \phi_\tau(\F_I(\phi_\tau))$. Pick $x\in\phi_\tau^{-1}(\F_I(\phi_\tau))$. Then $\phi_\tau(x)\in\F_I(\phi_\tau)$ and hence $\{\phi_\tau^n\}_{n\ge 1}$ is equicontinuous on a neighborhood $U\subset\bP^1$ of $\phi_\tau(x)$. Take a neighborhood $V$ of $x$ contained in the preimage $\phi_\tau^{-1}(U)$. We conclude by Lemma \ref{lem:holder} that $\{\phi_\tau^n\}_{n\ge1}$ is equicontinuous on $V$, and $x\in\F_I(\phi_\tau)$. Now we pick $y\in\F_I(\phi_\tau)$ and let $w=\phi_\tau(y)$. We assert that $w\in\F_I(\phi_\tau)$. Indeed, since $y\in\F_I(\phi_\tau)$, there exists a neighborhood $W\subset\bP^1$ of $y$ such that $\{\phi_\tau^n\}_{n\ge 1}$ is equicontinuous on $W$. As $\phi_\tau$ is an open map by Lemma \ref{cor:open}, $\phi_\tau(W)$ is a neighborhood of $w$ on which $\{\phi_\tau^{n-1}\}_{n\ge 1}$ is equicontinuous. Thus $w\in\F_I(\phi_\tau)$.  

Now let us prove statement (3). It suffices to show $\F_I(\phi_\tau^m)= \F_I(\phi_\tau)$. We have $\F_I(\phi_\tau)\subset\F_I(\phi_\tau^m)$ because $\{(\phi_\tau^m)^n\}_{n\ge 1}$ is a subsequence of $\{\phi_\tau^n\}_{n\ge 1}$.  For the inverse inclusion, pick $x\in\F_I(\phi_\tau^m)$. Then for any $j\ge 1$, there exists $n\ge 0$ such that $j=mn+i$ for some $0\le i\le m-1$. It follows from Lemma \ref{lem:holder} that there exists $C\ge1$ such that for any $x_1,x_2\in\bP^1$,
$$\sigma(\phi_\tau^j(x_1),\phi_\tau^j(x_2))\le C\sigma(\phi_\tau^{mn}(x_1),\phi_\tau^{mn}(x_2))^{\lambda_\tau^i}$$
Now we choose $x_1, x_2$ close to $x$. Since $x\in\F_I(\phi_\tau^m)$, the sequence $\{(\phi_\tau^m)^n\}_{n\ge 1}$ is equicontinuous at $x$. If $\lambda_\tau\ge 1$, from the above inequality, we immediately obtain that $\{\phi_\tau^j\}_{j\ge 1}$ is equicontinuous at $x$. If  $0<\lambda_\tau<1$, again from the above inequality, we have
$$\sigma(\phi_\tau^j(x_1),\phi_\tau^j(x_2))\le C\sigma(\phi_\tau^{nm}(x_1),\phi_\tau^{nm}(x_2))^{\lambda_\tau^m}.$$
Since $0<\lambda_\tau^m<1$ is a constant, we conclude that $\{\phi_\tau^j\}_{j\ge 1}$ is equicontinuous at $x$. Thus $x\in\F_I(\phi_\tau)$.
\end{proof}

\subsection{Comparison of classical and Berkovich Fatou sets}

The following is an analogue of the ``only if" part of \cite[Theorem 5.19]{Ben19}. 
\begin{lem}\label{lem:rigidF}
Let $\phi_\tau$ be a twisted rational map with $\deg\phi_\tau\ge 1$ and pick $x\in\F_I(\phi_\tau)$. Then there exists a disk $D\subset\bP^1$ containing $x$ such that the set $\bP^1\setminus\cup_{n\ge 0}\phi_\tau^n(D)$ is infinite. 
\end{lem}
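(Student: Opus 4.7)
The plan is to exploit the ultrametricity of the spherical metric $\sigma$ on $\bP^1$: the relation $\sigma(y,z)\le 1/2$ is an equivalence relation, and I aim to show that the forward iterates of a sufficiently small disk around $x$ can only meet countably many of its (infinitely many) equivalence classes.

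First I would unpack the hypothesis $x\in\F_I(\phi_\tau)$: equicontinuity of the family $\{\phi_\tau^n\}_{n\ge0}$ at $x$ applied with $\epsilon=1/2$ yields some $\delta>0$ such that every $y\in\bP^1$ with $\sigma(y,x)<\delta$ satisfies $\sigma(\phi_\tau^n(y),\phi_\tau^n(x))<1/2$ for all $n\ge0$. Setting $D:=\{y\in\bP^1:\sigma(y,x)<\delta\}$, ultrametricity of $\sigma$ makes $D$ a classical disk containing $x$, and by construction
\[
\phi_\tau^n(D)\subset\{y\in\bP^1:\sigma(y,\phi_\tau^n(x))\le 1/2\}\quad\text{for every }n\ge 0.
\]

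Next I would invoke the partition structure: the ``spherical $1/2$-balls'' partition $\bP^1$, and this partition is infinite. Indeed, since $K$ is algebraically closed, its residue field $k$ is algebraically closed (lift any monic polynomial over $k$ to $O_K$, find its root in $K$, and reduce), hence infinite; lifting distinct elements of $k$ to the valuation ring $\{z\in K:|z|\le 1\}\subset\bP^1$ then yields infinitely many points pairwise at spherical distance $1$, lying in pairwise distinct classes of the partition. Since each $\phi_\tau^n(D)$ lies inside the single $1/2$-ball containing $\phi_\tau^n(x)$, the whole union $\bigcup_{n\ge0}\phi_\tau^n(D)$ meets at most countably many classes, so infinitely many non-empty $1/2$-balls remain in the complement; hence $\bP^1\setminus\bigcup_{n\ge0}\phi_\tau^n(D)$ is infinite.

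No step in this outline presents a real obstacle; the only non-routine ingredients are the ultrametric triangle inequality for the chordal metric in the non-archimedean setting (standard, and tacitly used in Lemma \ref{lem:holder}) and the infinitude of the residue field of $K$ (immediate from algebraic closedness). Notably, the argument uses only equicontinuity and makes no assumption on $\deg \phi_\tau$ beyond $\ge 1$.
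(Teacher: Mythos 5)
There is a genuine gap, and it is the cardinality step at the very end. You conclude that because $\bigcup_{n\ge 0}\phi_\tau^n(D)$ meets only countably many spherical $1/2$-balls while there are infinitely many such balls, infinitely many balls remain in the complement. This inference requires the family of $1/2$-balls to be \emph{uncountable}, and that is false for a key example in the paper. For $K=\bC_p$ the space is separable ($\overline{\bQ}_p$ is a countable dense subset), and any $1/2$-separated set in a separable metric space is countable; since distinct $1/2$-balls have centers at spherical distance $>1/2$, there are only countably many $1/2$-balls in $\bP^1(\bC_p)$. Your argument therefore cannot exclude the possibility that the orbit of $x$ visits every $1/2$-ball, in which case ``at most countably many touched'' and ``infinitely many total'' give no information. (The observation that the residue field is infinite is correct, but it only tells you the partition is infinite, not that it exceeds the countable union you are comparing it against.)

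The paper avoids this by not counting balls at all. It splits into three mutually exclusive cases according to how the iterated images of a small disk $D(x,\delta')$ sit relative to the disk itself: (i) all forward images are disjoint from $D(x,\delta')$, (ii) some forward image is contained in $D(x,\delta')$, (iii) forward images eventually properly contain $D(x,\delta')$ for every radius $\delta'$. In case (i) the complement of the union contains the infinite annulus $D(x,\delta')\setminus D(x,\delta'')$; in case (ii) the whole orbit of the disk is confined to finitely many directions at $\xi_G$, so your counting heuristic does apply there; and case (iii) is shown to contradict equicontinuity by an explicit expansion argument. Your proposal essentially handles only (an analogue of) case (ii) and offers nothing against case (iii), which is precisely where the full strength of equicontinuity — not just equicontinuity with one fixed $\epsilon$ — is needed. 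To repair your approach you would have to show independently that an orbit of a classical Fatou point cannot enter every $1/2$-ball, which is not obviously easier than the paper's case (iii) analysis.
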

\begin{proof}
Since the iterates of $\phi_\tau$ are equicontinuous at $x\in\F_I(\phi_\tau)$, there exists a $\delta>0$ such that for all $y\in D(x,\delta)\subset\bP^1$ and for all $n\geq 0$, 
$$\sigma(\phi_\tau^n(y),\phi_\tau^n(x))<1.$$

{\bf Case 1}: There exists $0<\delta'<\delta$ such that $D(x,\delta')\cap \phi_\tau^n(D(x,\delta'))=\emptyset$ for all $n\ge 1$. Then for any $\delta''$ such that $0<\delta''<\delta'$, the set $\bP^1\setminus\cup_{n\ge 0}\phi_\tau^n(D(x,\delta''))$ contains $D(x,\delta')\setminus D(x,\delta'')$. Therefore $D=D(x,\delta'')$ is a disk as required.

{\bf Case 2}: There exist $0<\delta'<\delta$ and $m\ge 1$ such that $\phi_\tau^m(D(x,\delta'))\subseteq D(x,\delta')$. It follows that $\cup_{n\ge 0}\phi_\tau^n(D(x,\delta'))\subset \bP^1\subset \sP^1$ is contained in at most $m$ directions at the Gauss point $\xi_G$. Consequently $\bP^1\setminus\cup_{n\ge 0}\phi_\tau^n(D(x,\delta'))$ contains all classical points in some direction at $\xi_G$ because there are infinitely many directions at $\xi_G$. We obtain the conclusion by taking $D=D(x,\delta')$.

{\bf Case 3}: For any $0<\delta'<\delta$, there exists $m:=m(\delta')\ge 1$ such that $D(x,\delta')\subsetneq \phi_\tau^m(D(x,\delta'))$. Let us rule out this case by showing that $\{\phi_\tau^n\}_{n\ge 1}$ is not equicontinuous at $x$ which contradicts the assumption that $x\in\F_I(\phi_\tau)$. For any $0<\delta'<\delta$ and the corresponding $m$ such that $D(x,\delta')\subsetneq \phi_\tau^m(D(x,\delta'))$,  since $x\in \phi_\tau^m(D(x,\delta'))\cap D(x,\delta)$, we have either $D(x,\delta)\subseteq\phi_\tau^m(D(x,\delta'))$ or $\phi_\tau^m(D(x,\delta'))\subsetneq D(x,\delta)$. If there exists a sequence $\delta'_i\to 0$ such that $D(x,\delta)\subseteq\phi_\tau^m(D(x,\delta_i'))$, 
 then $\{\phi_\tau^n\}_{n\ge 1}$ is not equicontinuous at $x$, and we are done. Now we assume that $\phi_\tau^m(D(x,\delta'))\subsetneq D(x,\delta)$ for all sufficiently small $\delta'$. Let $D'=D'(\delta')$ be the smallest disk contained in $D(x,\delta)$ that contains all forward iterated images $\phi_\tau^j(D(x,\delta'))$ with $x\in \phi_\tau^j(D(x,\delta'))\subset D(x,\delta)$. It follows that $D'$ is not empty.

Let us prove that there exists $\ell\ge 0$ such that $D(x,\delta)\subseteq\phi_\tau^{\ell}(D')$. If $D'=D(x,\delta)$ then we can simply take $\ell=0$. Now assume that $D'\subsetneq D(x,\delta)$. By the assumption in this case, there exists $m'\ge 1$ such that $D'\subsetneq\phi_\tau^{m'}(D')$. Since by construction $D'$ is exhausted by disks of the form $\phi_\tau^j(D(x,\delta'))$, there exists $j_0\ge 1$ such that $D'\subsetneq\phi_\tau^{j_0+m'}(D(x,\delta'))\subseteq\phi_\tau^{m'}(D')$. Then by definition of $D'$, the disk $\phi_\tau^{j_0+m'}(D(x,\delta'))$ is not properly contained in $D(x,\delta)$. As both disks $\phi_\tau^{j_0+m'}(D(x,\delta'))$ and $D(x,\delta)$ contain $x$, we necessarily have $D(x,\delta)\subseteq\phi_\tau^{j_0+m'}(D')$.

We have proved $D(x,\delta)\subseteq\phi_\tau^{\ell}(D')$. Again as $D'$ is exhausted by disks of the form $\phi_\tau^j(D(x,\delta'))$ with $j\geq 1$, there exists $i\ge 1$ such that $D(x,\delta/2)\subsetneq\phi_\tau^i(D(x,\delta'))$. Since $\delta'$ is arbitrarily small, we conclude that $\{\phi_\tau^n\}_{n\ge 1}$ is not equicontinuous at $x$
\end{proof}

\begin{rmk}
For a nonconstant rational map $\phi\in K(z)$, a point $x$ is in the classical Fatou set $\F_I(\phi)$ if and only if there exists a disk $D\subset\bP^1$ containing $x$ such that $\bP^1\setminus\cup_{n\ge 0}\phi^n(D)$ contains at least two elements (see \cite[Theorem 5.19]{Ben19}). For a twisted rational map $\phi_\tau$, the above lemma asserts that one implication still holds. In fact, applying a similar argument as in the ``if" part of \cite[Theorem 5.19]{Ben19}, we can also prove that the other implication holds in the case where $\lambda_\tau\ge 1$. However, if  $0<\lambda_\tau<1$, then the other implication does not hold anymore for general $\phi_\tau$. The reason is that in contrast to rational maps, the non-archimedean Montel's theorem (\cite[Theorem 2.2]{Hsia00}) fails in this case because such a $\tau$ expands locally the spherical metric (see Section \ref{ex:monomial}).
\end{rmk}

Now we can prove Theorem \ref{thm:F-J}, which is a counterpart of \cite[Theorem 8.3]{Ben19} for twisted rational maps.
\begin{proof}[Proof of Theorem \ref{thm:F-J}]
Pick $x\in\F_I(\phi_\tau)$. We need to show that $x\in\F(\phi_\tau)\cap\P^1$. By Lemma \ref{lem:rigidF}, we can consider a disk $D\subset\bP^1$ containing $x$ such that the complement of $\cup_{n\ge 0}\phi_\tau^n(D)$ contains infinitely many elements. Let $\aD\subset\sP^1$ be the Berkovich disk such that $\aD\cap\bP^1=D$. It follows that 
$$\bigcup_{n\ge 0}\phi_\tau^n(D)=\left(\bigcup_{n\ge 0}\phi_\tau^n(\aD)\right)\cap\sP^1$$
 since $\phi_\tau^n(D)=\phi_\tau^n(\aD)\cap\bP^1$ by \cite[Theorem 7.8]{Ben19} and Lemma \ref{lem:fund1}. Thus $\cup_{n\ge 0}\phi_\tau^n(\aD)$ omits infinitely many points, and hence $x\in\F(\phi_\tau)\cap\bP^1$. Therefore the first assertion of Theorem \ref{thm:F-J} holds.

Now let us assume $\lambda_\tau\ge 1$ and pick $x\in\F(\phi_\tau)\cap\bP^1$. We show that $x\in\F_I(\phi_\tau)$, which proves the second assertion of Theorem \ref{thm:F-J}.  
  Note that by definition of $\F(\phi_\tau)$, there exists a Berkovich open set $U$ containing $x$ such that $X:=\cup_{n\ge0}\phi_\tau^n(U)$ omits infinitely many points in $\sP$. As $x\in\bP^1$, we may assume that $U$ is a Berkovich disk up to shrinking $U$. It follows from Proposition \ref{prop:image-whole} that $\phi_\tau^n(U)$ is also a Berkovich disk for all $n\ge 1$ since $X\neq\sP^1$. 

Now pick two distinct points $x_1,x_2\in\sP^1\setminus X$. We assert that $[x_1,x_2]\subset\sP^1\setminus X$. Assume on the contrary that there is a point $x_3\in ]x_1,x_2[\cap X$. Then $x_3$ is contained in some disk $\phi_\tau^n(U)$. It follows that $\phi_\tau^n(U)$ contains either $x_1$ or $x_2$ since $x_1$ and $x_2$ belong to two different directions at $x_3$. This contradicts the choice of $x_1$ and $x_2$. 

Consider any type II point $x_0\in ]x_1,x_2[$. By the previous paragraph, each disk $\phi_\tau^n(U)$ is contained in one direction at $x_0$. We choose an $M\in\PGL(2,K)$ such that $M(\xi_G)=x_0$ and $U\subset M(\aD(0,1))$. We set $\Phi_n:=M^{-1}\circ\phi_\tau^n\circ M$, for all $n\ge 0$. Then for a fixed $n\ge 0$ and for all points $y\in M^{-1}(U)\cap\bP^1$, the images $\Phi_n(y)$ are contained in the same direction at $\xi_G$. 
Writing $\Phi_n$ as $\tau^n\circ h_n$ for some rational map $h_n\in K(z)$ by Lemma \ref{cor:composition}, we claim that  $\{h_n\}_{n\ge 0}$ is equicontinuous on  $M^{-1}(U)\cap \bP^1$. Indeed, denoting by $W_n=\Phi_n(M^{-1}(U))$, for each $n\ge 0$, we have that $W_n$ is contained in a single direction at $\xi_G$, so  $\tau^{-n}(W_n)$ is contained in a single direction at $\xi_G$ by Lemma \ref{lem:fund} (2), which implies that $h_n(M^{-1}(U))=\tau^{-n}(W_n)$ is contained in a single direction at $\xi_G$ and hence $\{h_n\}_{n\ge 1}$ is equicontinuous on  $M^{-1}(U)\cap \bP^1$ by \cite[Lemma 5.8]{Ben19}. Then applying Lemma \ref{lem:holder}, we conclude that $\{\Phi_n=\tau^n\circ h_n\}_{n\ge 0}$ is equicontinuous on $M^{-1}(U)\cap \bP^1$, since the hypothesis $\lambda_\tau\ge 1$ implies  $\lambda_{\tau^n}=(\lambda_\tau)^n\ge 1$. It follows that $x\in\F_I(\phi_\tau)$. 
\end{proof}

\begin{rmk}
There exists $\phi_\tau$ with $0<\lambda_\tau<1$ such that $\F_I(\phi_\tau)=\emptyset$ but $\F(\phi_\tau)\cap\bP^1=\bP^1$, see Section \ref{ex:monomial}.
\end{rmk}

\subsection{Periodic points}
\footnote{We thank Richard Birkett for pointing out to us type III repelling fixed points and for related discussions.} We classify periodic points of twisted rational maps in this subsection.  A point $\zeta\in\sP^1$ is fixed by a twisted rational map $\phi_\tau$ if $\phi_\tau(\zeta)=\zeta$. Let us begin with type I fixed points.

\begin{defn}
Let $\phi_\tau$ be a twisted rational map with $\deg\phi_\tau\ge 1$. We say that a fixed point $\zeta\in\bP^1$ of $\phi_\tau$ is 
\begin{enumerate}
\item \emph{indifferent} if $\phi_\tau(D)=D$ for any sufficiently small disk $\zeta\in D\subset\bP^1$; 
\item \emph{attracting} if $\phi_\tau(D)\subsetneq D$ for any sufficiently small disk $\zeta\in D\subset\bP^1$; or
\item  \emph{repelling} if $D\subsetneq\phi_\tau(D)$ for any sufficiently small disk $\zeta\in D\subset\bP^1$.
\end{enumerate}
\end{defn}

For the non type I points, in view of Lemma \ref{lem:fund}, we first need to define the corresponding notions direction-wise. Recall that $m_{\vec{v}}(\phi_\tau)$ is the directional multiplicity of $\phi_\tau$ at a direction $\vec{v}\in T_\xi\sP^1$.

\begin{defn}
Let $\phi_\tau$ be a twisted rational map with $\deg\phi_\tau\ge 1$. We say that a direction $\vec{v}\in T_\zeta\sP^1$ at $\zeta\in\sH^1$ is 
\begin{enumerate}
\item \emph{indifferent} if $\lambda_\tau m_{\vec{v}}(\phi_\tau)=1$,
\item \emph{attracting} if $\lambda_\tau m_{\vec{v}}(\phi_\tau)<1$, 
\item  \emph{repelling} if $\lambda_\tau m_{\vec{v}}(\phi_\tau)>1$.
\end{enumerate}
\end{defn}

 With the aid of directions, we can classify the non type I  fixed points as follows.
\begin{defn}
Let $\phi_\tau$ be a twisted rational map with $\deg\phi_\tau\ge 1$. We say that a fixed point $\zeta\in\sH^1$ of $\phi_\tau$ is 
\begin{enumerate}
\item \emph{indifferent} if all directions in $T_\zeta\sP^1$ are indifferent;
\item \emph{attracting} if at least one direction in $T_\zeta\sP^1$ is attracting and the rest are indifferent; 
\item  \emph{repelling} if at least one direction in $T_\zeta\sP^1$ is repelling and the rest are indifferent; or 
\item  \emph{saddle} if $T_\zeta\sP^1$ contains both attracting and repelling directions.
\end{enumerate}
\end{defn}

A periodic point $\zeta\in\sP^1$ of $\phi_\tau$ with exact period $\ell\ge 1$ is a fixed point of $\phi_\tau^\ell$. The above definitions extend to periodic points.

For rational maps, any non type I periodic point is either indifferent or repelling depending on whether the corresponding local degree is $1$. For twisted rational maps, saddle periodic points do exist, see Section \ref{ex:monomial}. We have the following:

\begin{lem}\label{lem:non-rigid-per}
 Let $\phi_\tau$ be a twisted rational map with $\deg\phi_\tau\ge 1$. Let $\zeta \in \sH^1$ be a periodic point of $\phi_\tau$. Then  the following hold.
 \begin{enumerate}
 \item If $0< \lambda_\tau< 1$, then $\zeta$ is either indifferent, attracting, repelling or saddle.
 \item If $\lambda_\tau = 1$, then $\zeta$ is either indifferent or repelling. 
 \item If $\lambda_\tau> 1$, then  $\zeta$ is repelling.
 \end{enumerate}
\end{lem}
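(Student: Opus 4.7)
The plan is to reduce to fixed points by passing to the iterate $\phi_\tau^\ell$ where $\ell$ is the exact period of $\zeta$, and then to control, direction by direction, the product $\lambda_{\tau^\ell}\cdot m_{\vec v}(\phi_\tau^\ell)$ that governs the attraction/repulsion behavior.

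First I would observe that since $\phi_\tau\circ\phi_\tau=\phi\circ\tau\circ\phi\circ\tau$, iterating the defining identity of Section~2.2 gives $\lambda_{\tau^\ell}=\lambda_\tau^\ell$, so $\lambda_{\tau^\ell}$ lies on the same side of $1$ as $\lambda_\tau$. Since the defining conditions of indifferent/attracting/repelling/saddle direction at $\zeta\in\sH^1$ are stated in terms of $\lambda_\tau\cdot m_{\vec v}(\phi_\tau)$ and since $\zeta$ is fixed by $\phi_\tau^\ell$, the classification of $\zeta$ as a periodic point is the classification of $\zeta$ as a fixed point of $\phi_\tau^\ell$. Thus we may assume $\ell=1$ after replacing $\phi_\tau$ by $\phi_\tau^\ell$ (note that $\deg\phi_\tau^\ell\ge 1$ still).

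Next, for any direction $\vec v\in T_\zeta\sP^1$, the iterated chain rule (Proposition~\ref{prop:deg-composition}(2)) yields
\[
m_{\vec v}(\phi_\tau^\ell)=\prod_{i=0}^{\ell-1}m_{T_\zeta\phi_\tau^i(\vec v)}(\phi_\tau),
\]
which is a product of positive integers and therefore an integer $\ge 1$. This is the only quantitative fact needed. For case (3), $\lambda_\tau>1$ gives $\lambda_{\tau^\ell}\cdot m_{\vec v}(\phi_\tau^\ell)\ge \lambda_\tau^\ell>1$ for every direction, so every direction at $\zeta$ is repelling, and $\zeta$ is a repelling periodic point. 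For case (2), $\lambda_{\tau^\ell}=1$ and each direction satisfies $\lambda_{\tau^\ell}\cdot m_{\vec v}(\phi_\tau^\ell)=m_{\vec v}(\phi_\tau^\ell)\in\{1,2,3,\dots\}$: no direction can be attracting, so either every direction has multiplicity $1$ and $\zeta$ is indifferent, or at least one direction has multiplicity $\ge 2$ and $\zeta$ is repelling (the saddle case is excluded because there is no attracting direction).

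Finally, for case (1), since $0<\lambda_\tau^\ell<1$ and the directional multiplicities $m_{\vec v}(\phi_\tau^\ell)$ can range over arbitrary positive integers, the product $\lambda_\tau^\ell\cdot m_{\vec v}(\phi_\tau^\ell)$ is unconstrained in sign relative to $1$. Hence each direction may a priori be attracting, indifferent, or repelling, and consequently $\zeta$ falls into one of the four types indifferent, attracting, repelling, or saddle (this item is really an enumeration of logical possibilities rather than a theorem about exclusion). There is no serious obstacle in the argument; the only subtle point is confirming that passing from $\phi_\tau$ to $\phi_\tau^\ell$ preserves the classification, which is immediate from $\lambda_{\tau^\ell}=\lambda_\tau^\ell$ and the chain rule.
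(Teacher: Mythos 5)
Your proof is correct and follows essentially the same route as the paper: reduce to a fixed point of an iterate, note that $\lambda_{\tau^\ell}=\lambda_\tau^\ell$, and use the fact that directional multiplicities are positive integers so that $\lambda_\tau m_{\vec v}\ge\lambda_\tau$ with no attracting directions when $\lambda_\tau\ge 1$. The only cosmetic difference is that the paper passes through the local degree $\deg_\zeta\phi_\tau$ to decide whether some direction has multiplicity $\ge 2$, whereas you argue directly on the multiplicities, which is equally valid.
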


\begin{proof}
Up to taking iteration, we assume that $\zeta$ is a fixed point of $\phi_\tau$. There is nothing to show if $0< \lambda_\tau< 1$.

Suppose that $\lambda_\tau = 1$. If $\deg_\zeta\phi_\tau = 1$, then for every direction $\vec v\in T_\zeta\sP^1$, we have $m_{\vec v}(\phi_\tau) = 1$. It follows that $\zeta$ is indifferent. If $\deg_\zeta\phi_\tau > 1$, then $T_\zeta\sP^1$ contains directions with directional multiplicities at least $2$. This implies that such directions are repelling and that all other directions are indifferent. Thus  statement (2) holds. 

Statement (3) follows from the fact that $m_{\vec v}(\phi_\tau) \ge 1$ and  hence $\lambda_\tau m_{\vec v}(\phi_\tau)  > 1$. 
\end{proof}

\begin{rmk} 
\begin{enumerate} 
 \item  If $1/\lambda_\tau$ is not an integer, then $\phi_\tau$ has no indifferent directions.
 \item Each case in Lemma \ref{lem:non-rigid-per} (1) may occur, see Section \ref{ex:monomial}.
 \item If $\phi_\tau$ has an attracting or saddle periodic point in $\sH^1$, then $0<\lambda_\tau<1$.
 \end{enumerate}
\end{rmk}

Now we state some relations between fixed points and Fatou/Julia sets, which are the counterpart of \cite[Theorems 5.14 and 8.7]{Ben19} for twisted rational maps.
\begin{prop}\label{prop:periodic-J}
Let $\phi_\tau$ be a twisted rational map with $\deg\phi_\tau\ge 1$ and $\lambda:=\lambda_\tau$, and let $\zeta\in\sP^1$ be a fixed point of $\phi_\tau$. Then the following hold: 
\begin{enumerate}
\item If $\zeta\in\bP^1$, then $\zeta\in\J_I(\phi_\tau)$ if and only if $\zeta$ is repelling.
\item Assume that $\lambda\ge 1$ and that $\zeta$ is of type II.
\begin{enumerate}
\item If $\zeta$ is indifferent, then $\zeta\in\J(\phi_\tau)$ if and only if there is a bad direction at $\zeta$ having infinite forward orbit under $T_\zeta\phi_\tau$.
\item If $\zeta$ is repelling, then $\zeta\in\J(\phi_\tau)$.
\end{enumerate}
\item Assume that $\lambda\ge 1$ and that $\zeta$ is of type III.
\begin{enumerate}
\item If $\zeta$ is indifferent, then $\zeta\in\F(\phi_\tau)$.
\item If $\zeta$ is repelling, then $\zeta\in\J(\phi_\tau)$.
\end{enumerate}
\item Assume that $\lambda\ge 1$ and that $\zeta$ is of type IV. Then $\zeta$ is an indifferent point contained in $\F(\phi_\tau)$.
\end{enumerate}
\end{prop}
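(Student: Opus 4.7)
The statement is the twisted analogue of \cite[Theorems 5.14 and 8.7]{Ben19}, so the plan is to adapt each case of those classical arguments step by step. The main new ingredient is the factor $\lambda_\tau$ appearing in the hyperbolic dilation (Proposition \ref{prop:multi-basic}(1)) and in the H\"older exponent of Lemma \ref{lem:holder}. A convenient simplification in parts (2)--(4) is that the hypothesis $\lambda_\tau \ge 1$, together with the integrality of $m_{\vec v}(\phi_\tau) \ge 1$, forces every indifferent point in $\sH^1$ to satisfy $\lambda_\tau = 1$ and $m_{\vec v}(\phi_\tau) = 1$ for all directions $\vec v \in T_\zeta\sP^1$.

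For part (1) one works directly with the equicontinuity definition in $\bP^1$. If $\zeta$ is repelling, the strict inclusion $D\subsetneq\phi_\tau(D)$ iterates to produce points arbitrarily close to $\zeta$ whose forward orbits attain a fixed positive spherical distance, violating equicontinuity, hence $\zeta\in\J_I(\phi_\tau)$. If $\zeta$ is attracting, then $\phi_\tau^n(D)\subsetneq D$ for all $n$, and a direct application of Lemma \ref{lem:holder} yields equicontinuity at $\zeta$. If $\zeta$ is indifferent, then $\phi_\tau^n(D)=D$ for all $n$, and again Lemma \ref{lem:holder} applied to the composition shows equicontinuity, so $\zeta\in\F_I(\phi_\tau)$.

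For the Berkovich repelling cases (2)(b) and (3)(b), pick a repelling direction $\vec v$, that is $\lambda_\tau m_{\vec v}(\phi_\tau) > 1$. By Proposition \ref{prop:multi-basic}(1), $\phi_\tau$ dilates the hyperbolic distance along $[\zeta,\cdot]$ inside $\sB(\vec v)$ by a factor strictly larger than $1$; iterating this expansion and combining with Proposition \ref{prop:image-whole} and the surjectivity of $\phi_\tau$, any neighborhood of $\zeta$ is eventually mapped onto all of $\sP^1$, so $\zeta\in\J(\phi_\tau)$. For (2)(a), the converse implication uses Proposition \ref{prop:deg-image}: an infinite forward $T_\zeta\phi_\tau$-orbit of a bad direction $\vec v$ ensures that preimages of classical points sit in arbitrarily many distinct directions at $\zeta$, ruling out the Fatou omission property; conversely, if every bad direction has finite forward orbit, then for a sufficiently small $U\ni\zeta$ all iterated images $\phi_\tau^n(U)$ lie in the union of finitely many directions at $\zeta$ together with their images, and classical points in some untouched direction are omitted, hence $\zeta\in\F(\phi_\tau)$. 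Part (3)(a) then follows immediately because $|T_\zeta\sP^1|=2$ automatically makes every forward $T_\zeta\phi_\tau$-orbit finite.

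For part (4), the unique tangent direction $\vec v$ at a type IV point $\zeta$ must be fixed by $T_\zeta\phi_\tau$; using the standard fact that type IV points have local degree $1$ for any rational map, one has $m_{\vec v}=1$, and together with $\lambda_\tau\ge 1$ this forces $\lambda_\tau=1$, so $\zeta$ is indifferent. A sufficiently small Berkovich open disk $U\ni\zeta$ then satisfies $\phi_\tau(U)\subset U$ by Proposition \ref{prop:image-whole}, and the iterated images omit infinitely many points of $\sP^1$, yielding $\zeta\in\F(\phi_\tau)$. The most delicate step across the entire argument is the forward implication of (2)(a): the combinatorial bookkeeping of how surplus multiplicities distribute preimages across the infinitely many directions at a type II point, so that an infinite bad orbit really blocks every omission, is the technical heart of the proof; a secondary subtlety is justifying the local-degree-one property at type IV points, which may require a direct argument using the nested disks defining $\zeta$ rather than an appeal to tameness (which is not assumed here).
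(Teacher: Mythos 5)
Your proposal captures the right preliminary reductions (e.g., that $\lambda_\tau\ge 1$ plus integrality of directional multiplicities forces $\lambda_\tau=1$ at any indifferent hyperbolic point), and parts (1) and (2a) are roughly aligned with the paper's arguments even if the reference to Lemma~\ref{lem:holder} in (1) is unnecessary (it does not by itself give equicontinuity when $\lambda_\tau<1$; the invariance $\phi_\tau^n(D)\subseteq D$ for small $D$ is what does). However, there are two genuine gaps. First, for (2b) and (3b), iterating an annulus inside a single repelling direction $\vec v$ does \emph{not} produce all of $\sP^1$: if the $T_\zeta\phi_\tau$-orbit of $\vec v$ consists of good directions and is finite (e.g.\ periodic), the forward images stay inside the finitely many corresponding open disks and omit every other direction at $\zeta$, of which there are infinitely many when $\zeta$ is of type II. The paper instead starts from an arbitrary affinoid neighborhood $U$ of $\zeta$, which already contains all but finitely many directions, reduces to the finite set $S$ of directions never covered by iterates of $U$, normalizes so that each $\vec u\in S$ is totally invariant with full directional multiplicity, and then uses Lemma~\ref{thm:imgtypeII} together with $\lambda_\tau m_{\vec u}>1$ to show iterates of a small annulus adjacent to $\zeta$ in $\sB(\vec u)$ eventually fill $\sB(\vec u)$ minus a single classical point; only then does the omitted set become finite.

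Second, and more seriously, the appeal in part (4) to a ``standard fact that type IV points have local degree $1$ for any rational map'' is not correct. Faber's work shows the ramification locus of a (non-tame) rational map can contain type IV points of local degree $\ge 2$; what is true is that type IV \emph{periodic} points of a rational map have local degree $1$, but that is essentially the statement being proved, and moreover $\deg_\zeta\phi_\tau=\deg_{\tau(\zeta)}\phi$ where $\tau(\zeta)$ is generally \emph{not} a fixed point of $\phi$, so even the correct fact for rational maps would not apply here. In addition, even granting $m_{\vec v}(\phi_\tau)=1$, this does not ``force'' $\lambda_\tau=1$ when $\lambda_\tau\ge1$: with $\lambda_\tau>1$ and $m_{\vec v}=1$ the point would be repelling, so indifference is precisely what must be ruled out, not assumed. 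The paper handles this by contradiction: assuming $\zeta$ is repelling, it uses \cite[Proposition~4.17]{Ben19} to find a type I fixed point $\tau(a)$ of $\phi$ inside a small disk, conjugates by the translation $M(z)=z+a-\tau(a)$ (which fixes the relevant disk since $\lambda_\tau\ge1$ ensures $\tau(\overline D_1)\subseteq\overline D_1$) so that $M\circ\phi_\tau$ fixes $a$, and then shows the tangent data at $a\vee\zeta$ is incompatible with $a$ being fixed. You would need to supply an argument of this type; no shortcut via a local-degree-one axiom is available.
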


\begin{proof}
Let us first show statement $(1)$.
Consider a small neighborhood $D$ of $\zeta$. Shrinking $D$ if necessary, we may assume that $D\subset\bP^1$ is a disk. If $\zeta\in\bP^1$ is a repelling fixed point, then for any $\xi\in D\setminus\{\zeta\}$, there exists a sequence $n_i\to\infty$ as $i\to\infty$ such that $\phi_\tau^{n_i}(\xi)\notin D$. It follows that $\sigma(\phi_\tau^{n_i}(\zeta), \phi_\tau^{n_i}(\xi))=\sigma(\zeta, \phi_\tau^{n_i}(\xi))\ge\diam(D)$. Hence $\{\phi_\tau^n\}$ is not equicontinuous at $\zeta$. Thus $\zeta\in \J_I(\phi_\tau)$. 

Conversely,  if $\zeta\in\bP^1$ is contained in $\J_I(\phi_\tau)$, then $\{\phi_\tau^n\}_{n\ge 1}$ is not equicontinuous at $\zeta$.  Then there exist $\epsilon>0$ and $\xi\in\bP^1\setminus\{\zeta\}$ arbitrary close to $\zeta$ such that $\sigma(\phi_\tau^{n_0}(\zeta), \phi_\tau^{n_0}(\xi))\ge\epsilon$ for some $n_0\ge 1$. It follows that $\phi_\tau^{n_0}(D(\zeta, \sigma(\zeta,\xi))\not\subset D(\zeta, \sigma(\zeta,\xi))$ if $ \sigma(\zeta,\xi)<\epsilon$. Thus $\zeta$ is neither attracting nor indifferent, and hence is repelling.

Now we begin to show statement $(2)$. By Theorem \ref{thm:F-J} and statement (1), we can assume that $\zeta\in\sH^1$.
Let us first prove statement $(2a)$. Assume that all bad directions $\vec{v}$ at the type II indifferent point $\zeta$ have finite forward orbits. Observing that $\deg_\zeta\phi_\tau=1$ since $\zeta$ is indifferent and $\lambda\ge 1$, we obtain that any bad direction $\vec{v}$ has finite backward orbit as well. Note that $\phi_\tau$ has finitely many bad directions at $\zeta$ by Proposition \ref{prop:multi-basic}. Since $\zeta$ is indifferent, we can remove closed Berkovich disks in these bad directions and their grand orbits under $T_\zeta\phi_\tau$, and obtain an $\phi_\tau$-invariant neighborhood of $\zeta$. Thus $\zeta\in\F(\phi_\tau)$. Conversely, if a bad direction has an infinite forward orbit, then it has infinite backward orbit. Then we obtain that any neighborhood $U$ of $\zeta$ contains an iterated preimage of this bad direction. It follows that some iterate of $U$ is all of $\sP^1$. Thus $x\in\J(\phi_\tau)$.

Let us prove statement $(2b)$. Assume that $\zeta$ is repelling.
Up to changing coordinates, by Lemma \ref{cor:composition}, we can assume that $\zeta=\xi_G$. Let $U\subset\sP^1$ be a Berkovich affinoid containing $\xi_G$. Then there are only finitely many directions at $\xi_G$ not contained in $U$, and hence there are only finitely many directions $\vec{v}$ at $\xi_G$ not contained in any iterate $\phi_\tau^n(U)$. Denote by $S$ the set consisting of the above directions $\vec{v}$. Then every direction in $S$ has finite backward orbit and finite forward orbit under $T_{\xi_G}\phi_\tau$. Considering iterate of $\phi_\tau$ if necessary, for each direction $\vec{v}\in S$, we may assume that  $(T_{\xi_G}\phi_\tau)^{-1}(\vec{v})=\{\vec{v}\}$ and hence $m_{\vec{v}}(\phi_\tau)=\deg T_{\xi_G}\phi_\tau$. Now pick any direction $\vec{u}\in S$, and up to conjugacy, we can assume that $\vec{u}$ is the direction at $\xi_G$ containing $0$. Then for any annulus $A\subseteq \aD(0,1)$ with a boundary point $\xi_G$, we have  $A\subseteq\phi_\tau(A)$ because $\zeta$ is repelling. If $\lambda_\tau>1$, by Lemma \ref{thm:imgtypeII}, we obtain that $\aD(0,1)\setminus\cup_{n\ge0}\phi_\tau^n(A)$ contains at most one point $0$.  If $\lambda_\tau=1$, then $\xi_G\in\aR(\phi_\tau)$ since $\xi_G$ is repelling. In this case, we first observe that $m_{\vec{u}}(\phi_\tau)=\deg T_{\xi_G}\phi_\tau\ge 2$. Then applying Lemma \ref{thm:imgtypeII}, we deduce that $\aD(0,1)\setminus\cup_{n\ge0}\phi_\tau^n(A)$ contains at most one point $0$. In both cases, we conclude that $\cup_{n\ge 1}\phi_\tau^n(U)$ omits at most finitely many points. Thus $\zeta\in\J(\phi_\tau)$.

Now we prove statement (3), up to conjugacy, we can assume that $\zeta\in]0,\xi_G[$.  For statement (3a), pick $\xi\in]0,\zeta[$ sufficiently close to $\zeta$ and consider the annulus $A\subset\sP^1$ with $\partial A=\{\zeta, \xi\}$. Since $\zeta$ is indifferent we have $\deg_\zeta\phi_\tau=\lambda=1$ and we conclude by Lemma \ref{thm:imgtypeII}  that $\phi_\tau^2(A)=A$. It follows that $T_\zeta\phi_\tau^2$ fixes each direction at $\zeta$. To see $\zeta\in \F(\phi_\tau)$, we consider $\xi'\in(\zeta,\infty)$ sufficiently close to $\zeta$ and consider the annulus $A'\subset\sP^1$ with $\partial A'=\{\zeta, \xi'\}$. Applying Lemma \ref{thm:imgtypeII} again, we conclude that $\phi_\tau^2(A')=A'$. Thus the neighborhood $A\cup A'$ is fixed by $\phi_\tau^2$, so $\zeta\in\F(\phi_\tau^2)=\F(\phi_\tau)$ (see Proposition \ref{prop:fatoujuliabasics} (3)).  For statement (3b), noting that $T_\zeta\sP^1$ contains exactly two directions, we obtain the conclusion by a similar argument as in statement (2b).

\smallskip
Let us show statement $(4)$. Suppose on the contrary that $\zeta$ is not indifferent. Then by Lemma \ref{lem:non-rigid-per} and by the assumption that $\lambda\ge 1$, we have that $\zeta$ is repelling. Then there exists a type II point $\zeta_1\in]\zeta,\infty[$ sufficiently close to $\zeta$ such that $\zeta_2:=\phi_\tau(\zeta_1)\in]\zeta_1,\infty[$. We can further assume that $\phi_\tau$ has constant local degree on $[\zeta, \zeta_2]$ and $\phi_\tau$ maps the Berkovich annulus $A$ with boundary $\zeta$ and $\zeta_2$ not to all of $\sP^1$. Change coordinates so that $\zeta_1,\zeta_2\in ]0,\xi_G[$ and $\phi_\tau(0)\not=0$. Consider the corresponding closed disks $\overline{D}_1$ and $\overline{D}_2$ in $K$ for $\zeta_1$ and $\zeta_2$, respectively. It follows that $\overline{D}_1\subsetneq\overline{D}_2$. 
 By \cite[Proposition 4.17]{Ben19} the rational map $\phi$ has a fixed point in $\tau(\overline{D}_1)$. Let this fixed point be $\tau(a)\in\tau(\overline{D}_1)$ for some $a\not=0$. Set $M(z)=z+a-\tau(a)$. It follows that $M\circ\phi_\tau$ fixes $a$. Note that $\tau(\zeta_1)\in [0,\zeta_1]$ because $\lambda\geq 1$, which implies that $a$ and $\tau(a)$ are both contained in $\overline{D}_1$ and hence $M$ fixes $\overline{D}_1$. Denote by $\xi:=a\vee\zeta\in\sH^1$. Then 
 $\zeta_1\in [\xi,\infty]$. Since $\zeta$ is repelling and $M\circ\phi_\tau$ has constant local degree on $[\zeta,\zeta_2]$, the direction at $\xi$ containing $a$ maps to a direction at $M\circ\phi_\tau(\xi)\in]\xi,\zeta_2]$ not containing $\xi$ under $T_\xi(M\circ\phi_\tau)$. Since $\phi_\tau(A)\not=\sP^1$, we conclude that  $a$ cannot be fixed by $M\circ\phi_\tau$. This is a contradiction. Thus $\zeta$ is indifferent.  Moreover, applying again the argument used in statement (3a), we can conclude that $\zeta\in\F(\phi_\tau)$. 
\end{proof}

When $\lambda_\tau\ge 1$, the above result on periodic points of type I, II or IV is the same as in the rational case, see \cite[Theorem 8.7]{Ben19}. Under a mild assumption on the field $K$ 
 or on $\tau$, we can rule out type III repelling fixed points for twisted rational maps:

\begin{prop}\label{prop:periodic-III}
Let $\phi_\tau$ be a twisted rational map with $\deg\phi\ge 1$. Assume  $\lambda:=\lambda_\tau\ge 1$ and let $\zeta\in\sP^1$ be a type III fixed point of $\phi_\tau$. If $\lambda$ is rational or if $\log|K^\times|$ is a field, then $\zeta$ is an indifferent fixed point contained in $\F(\phi_\tau)$.
\end{prop}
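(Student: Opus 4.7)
The plan is to show that under either of the two hypotheses on $K$ or $\tau$, no type III fixed point of $\phi_\tau$ can be repelling. Once this is established, Lemma \ref{lem:non-rigid-per}(2)--(3) forces $\zeta$ to be indifferent, and then Proposition \ref{prop:periodic-J}(3a) immediately gives $\zeta \in \F(\phi_\tau)$. So the entire argument reduces to ruling out a repelling type III fixed point.

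To that end, I would suppose for contradiction that $\zeta = \xi_{a,r}$ is repelling with $r \notin |K^\times|$, and pick a repelling tangent direction $\vec{v} \in T_\zeta \sP^1$ (there are only two, so such a choice exists), setting $d := m_{\vec{v}}(\phi_\tau)$, so that $\lambda d > 1$. After changing coordinates so that $\vec{v}$ is the direction towards $\infty$, I would apply Lemma \ref{thm:imgtypeII} to an annulus $U_\theta$ abutting $\zeta$ in this direction; this yields $\phi_\tau(\zeta) = \xi_{b_0, |b_d| r^{\lambda d}}$ with $b_d \in K^\times$ the leading Weierstrass coefficient. Since $\phi_\tau(\zeta) = \zeta$ and the diameter of a type III point is uniquely determined, the diameters must agree, giving the key equation
\[
r^{\lambda d - 1} \;=\; |b_d|^{-1} \;\in\; |K^\times|, \qquad \lambda d - 1 > 0.
\]

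From here I would split into the two cases and derive a contradiction in each. If $\lambda \in \Q$, then $\lambda d - 1$ is a nonzero rational, and since $K$ is algebraically closed the value group $|K^\times|$ is divisible, so $\log|K^\times|$ is a $\Q$-vector subspace of $\R$; taking logarithms in the displayed equation and dividing by $\lambda d - 1$ forces $\log r \in \log|K^\times|$, i.e.\ $r \in |K^\times|$, contradicting type III. If instead $\log|K^\times|$ is a subfield of $\R$, I would first observe that for any $x \in K^\times$ with $|x| \neq 1$ one has $\lambda = \log|\tau(x)|/\log|x|$, a ratio of nonzero elements of the field $\log|K^\times|$, hence $\lambda \in \log|K^\times|$; thus $\lambda d - 1 \in \log|K^\times|^\times$, and the same divide-in-the-field argument again gives $\log r \in \log|K^\times|$, the desired contradiction.

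The main (and essentially only) substantive step is the passage from the repelling condition to the diameter identity $r^{\lambda d - 1} \in |K^\times|$ via Lemma \ref{thm:imgtypeII}; the rest is an elementary algebraic manipulation inside $\R$ using the appropriate structure (divisibility in case one, closure under inversion in case two). A minor bookkeeping point to verify is that $T_\zeta \phi_\tau$ genuinely fixes $\vec v$ (so that the Weierstrass formula applies to a direction mapped into itself) --- but at worst this can be arranged by replacing $\phi_\tau$ with $\phi_\tau^2$, which does not affect the indifferent/repelling dichotomy nor the Fatou set by Proposition \ref{prop:fatoujuliabasics}(3).
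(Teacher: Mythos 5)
Your proposal is correct and follows essentially the same route as the paper: both derive the diameter identity $\diam(\zeta)^{1-d\lambda} = |b_d| \in |K^\times|$ from Lemma \ref{thm:imgtypeII}, and both use the same divisibility/field-closure argument in the two cases to conclude that $d\lambda - 1 \ne 0$ would force $\diam(\zeta) \in |K^\times|$, contradicting type III. The paper phrases this as showing $d\lambda=1$ directly (then $\lambda\ge 1$ and $d\ge 1$ force $d=\lambda=1$ and indifference), whereas you phrase it as ruling out the repelling case and invoking Lemma \ref{lem:non-rigid-per}; these are equivalent since a type III point has exactly two tangent directions of equal multiplicity and hence cannot be saddle. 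One small observation: your bookkeeping worry about whether $T_\zeta\phi_\tau$ fixes $\vec v$ is actually moot — if the two directions were swapped, the Weierstrass degree $d$ in the diameter equation would be negative, making $1-d\lambda > 0$ automatically, so the same contradiction rules that case out without passing to $\phi_\tau^2$.
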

\begin{proof}
Up to conjugacy, we may assume that $\zeta\in]0,\xi_G[$. By Lemma  \ref{thm:imgtypeII}, there exist $d\ge 1$ and $b\in K$ such that 
\[
\zeta=\phi_\tau(\zeta)=\zeta_{\phi_\tau(0), |b|\diam(\zeta)^{d\lambda}}.
\]
In particular $\diam(\zeta)=|b|\diam(\zeta)^{d\lambda}$. Thus $\diam(\zeta)^{1-d\lambda}=|b|$ is in $|K^\times|$. 

Assume by contradiction that $1-d\lambda\not=0$. We have $\diam(\zeta)=|b|^{1/(1-d\lambda)}$. If $\lambda\in \bQ$, then $|b|^{1/(1-d\lambda)}\in |K^\times|$ because $K$ is algebraically closed. If $\log|K^\times|$ is a field, we also have $|b|^{1/(1-d\lambda)}\in |K^\times|$ because $\lambda\in\log |K^\times|$ and hence $1/(1-d\lambda)\in\log |K^\times|$.
 In either case we have $\diam(\zeta)\in |K^\times|$, which contradicts that $\zeta$ is of type III. Thus $d\lambda-1=0$. Since $\lambda\ge 1$, this implies that $d=\lambda=1$. Moreover, since $\zeta$ is of type III, we have $\deg_\zeta\phi_\tau=d=1$, i.e.\ $\zeta$ is indifferent. Thus by Proposition \ref{prop:periodic-J} (3a), we have $\zeta\in\F(\phi_\tau)$.
\end{proof}

\begin{rmk}\label{fieldvaluationcondition}
If $K$ has a discrete valued subfield whose algebraic closure is dense in $K$, then 
$K$ satisfies the assumption in Proposition \ref{prop:periodic-III}.
\end{rmk}

The following is an immediate consequence of Proposition \ref{prop:periodic-J}. 
\begin{cor}\label{cor:F}
Let $\phi_\tau$ be a twisted rational map with $\deg\phi_\tau\ge 1$. Assume $\lambda_\tau\ge 1$ and let $\zeta\in\sH^1$ be a fixed point of $\phi_\tau$ in $\F(\phi_\tau)$. If a direction $\vec{v}\in T_\zeta\sP^1$ has infinite forward orbit under $T_\zeta\phi_\tau$, then $\vec{v}$ is contained in $\F(\phi_\tau)$.
\end{cor}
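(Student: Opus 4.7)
The plan is to reduce the statement to a direction-counting argument in the tangent space at $\zeta$ and then to show that the forward orbit of $\sB(\vec v)$ under $\phi_\tau$ misses infinitely many points of $\sP^1$, so that every point of $\sB(\vec v)$ lies in $\F(\phi_\tau)$.

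First I would dispose of the types III and IV cases at once: in either case $\#T_\zeta\sP^1 \le 2$, so every forward orbit of $T_\zeta\phi_\tau$ is automatically finite and the hypothesis of the corollary is vacuous. I may therefore assume that $\zeta$ is of type II. Combining $\zeta \in \F(\phi_\tau)$, $\lambda_\tau \ge 1$, and Proposition \ref{prop:periodic-J} (2), one deduces that $\zeta$ is indifferent and that every bad direction at $\zeta$ has finite forward orbit under $T_\zeta\phi_\tau$.

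Next I would argue that $\vec v$, and in fact each iterate $(T_\zeta\phi_\tau)^n(\vec v)$, must be a good direction: if one of them were bad, then its forward orbit would be an infinite tail of that of $\vec v$, contradicting Proposition \ref{prop:periodic-J} (2a). Applying Proposition \ref{prop:image-whole} inductively then yields
\[
\phi_\tau^n(\sB(\vec v)) = \sB\bigl((T_\zeta\phi_\tau)^n(\vec v)\bigr)
\]
for every $n \ge 0$, so the set $X := \bigcup_{n \ge 0}\phi_\tau^n(\sB(\vec v))$ is a disjoint union of Berkovich open disks at $\zeta$, none of which contains $\zeta$ itself.

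To finish it suffices to exhibit at least one direction $\vec w \in T_\zeta\sP^1$ which is missed by the forward orbit of $\vec v$, since the whole disk $\sB(\vec w) \subset \sP^1 \setminus X$ would then supply infinitely many points omitted by $X$. This holds because $T_\zeta\phi_\tau$, viewed as a rational self-map of the projective line over the residue field of $K$, has a fixed direction, and that fixed direction cannot belong to the infinite forward orbit of $\vec v$ (otherwise the orbit would stabilize and be finite). Taking $\sB(\vec v)$ as a neighborhood of each of its points then places $\sB(\vec v)$ inside $\F(\phi_\tau)$. The only delicate point is the interplay between the Fatou hypothesis on $\zeta$ and the infinite-orbit hypothesis on $\vec v$, which is what forces every iterate of $\vec v$ to remain good; once that is in place the argument is essentially formal.
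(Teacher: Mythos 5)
Your approach is essentially the paper's, up to the final step, which the paper leaves implicit (``the conclusion follows''). Your reduction to type~II, the use of Lemma~\ref{lem:non-rigid-per} and Proposition~\ref{prop:periodic-J}(2) to show $\zeta$ is indifferent and that every iterate of $\vec v$ is good, and the application of Proposition~\ref{prop:image-whole} to identify $\bigcup_n\phi_\tau^n(\sB(\vec v))$ with a disjoint union of disks all match the paper's argument.

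The one step you spell out explicitly --- exhibiting a direction $\vec w$ missed by the forward orbit --- contains a genuine flaw. You claim $T_\zeta\phi_\tau$ is a rational self-map of the residue projective line and hence has a fixed direction. But $T_\zeta\phi_\tau = T_{\tau(\zeta)}\phi\circ T_\zeta\tau$, and $T_\zeta\tau$ acts on $T_\zeta\sP^1 \cong \bP^1(\widetilde K)$ via the residue automorphism $\widetilde\tau$ of $\widetilde K$, not $\widetilde K$-linearly; so $T_\zeta\phi_\tau$ is itself a \emph{twisted} M\"obius map of $\bP^1(\widetilde K)$, one level down, and such maps can be fixed-point free. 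For example, when $\widetilde K=\bC$ and $\widetilde\tau$ is complex conjugation (which does occur, e.g.\ for $K$ the completed algebraic closure of $\bC((t))$ with $\tau$ extending complex conjugation on the coefficient field and fixing $t$), the map $z\mapsto -1/\bar z$ has no fixed point on $\bP^1(\bC)$. So the fixed-direction claim is unjustified and in general false.

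The conclusion you want is nevertheless immediate from what you have already established, and no fixed direction is needed: since $\zeta$ is indifferent and $\lambda_\tau\ge 1$, one has $\deg_\zeta\phi_\tau=1$, so $T_\zeta\phi_\tau$ is a bijection of $T_\zeta\sP^1$. Take $\vec w:=(T_\zeta\phi_\tau)^{-1}(\vec v)$. If $\vec w$ belonged to the forward orbit of $\vec v$, then $(T_\zeta\phi_\tau)^{n+1}(\vec v)=\vec v$ for some $n\ge 0$ and the orbit would be periodic, hence finite, contradicting the hypothesis. So $\sB(\vec w)$ is disjoint from $X$, supplying the required infinitely many omitted points.
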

\begin{proof}
As $\vec{v}$ has infinite forward orbit, the fixed point $\zeta$ is of type II. By Lemma \ref{lem:non-rigid-per}, the fixed point $\zeta$ is either repelling or indifferent. By Proposition \ref{prop:periodic-J} (2), we conclude that $\zeta$ is indifferent and that the direction $\vec{v}$ is good. The conclusion follows.
\end{proof}

We say that the twisted rational map $\phi_\tau$ is \emph{simple} if $\J(\phi_\tau)$ is a singleton; otherwise, we say that $\phi_\tau$ is \emph{nonsimple}. Proposition \ref{prop:periodic-J} implies the following:

\begin{cor}\label{cor:simple}
Let $\phi_\tau$ be a simple twisted rational map with $\deg\phi_\tau\ge 2$ and $\lambda_\tau\ge 1$.
Assume that $\phi_\tau$ has no type III repelling fixed points.  
Then $\J(\phi_\tau)$ consists of a unique type II point.
\end{cor}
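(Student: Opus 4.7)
The plan is to classify the unique point $\zeta$ of $\J(\phi_\tau)$ by Berkovich type, eliminating every possibility other than type II. Since $\J(\phi_\tau)$ is completely invariant by Proposition~\ref{prop:fatoujuliabasics}(2), the singleton $\{\zeta\}$ is preserved by $\phi_\tau$, so $\zeta$ is a fixed point. This lets me appeal to Proposition~\ref{prop:periodic-J} according to the type of $\zeta$.

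Types III and IV are ruled out immediately from that classification. A type IV fixed point lies in $\F(\phi_\tau)$ by Proposition~\ref{prop:periodic-J}(4), contradicting $\zeta \in \J(\phi_\tau)$. A type III fixed point, by Proposition~\ref{prop:periodic-J}(3), is either indifferent (hence in $\F(\phi_\tau)$) or repelling; the first case contradicts $\zeta \in \J(\phi_\tau)$ and the second contradicts the no-type-III-repelling hypothesis.

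The main step is to rule out type I. Assume $\zeta \in \bP^1$. Since $\lambda_\tau \ge 1$, Theorem~\ref{thm:F-J} gives $\zeta \in \J_I(\phi_\tau)$, and Proposition~\ref{prop:periodic-J}(1) then forces $\zeta$ to be classically repelling, i.e., $D \subsetneq \phi_\tau(D)$ for every sufficiently small disk $D \ni \zeta$. Complete invariance gives $\phi_\tau^{-1}(\zeta) = \{\zeta\}$, so $\zeta$ is totally ramified with local degree $d = \deg\phi_\tau \ge 2$. Using Lemma~\ref{cor:composition} together with the conjugation-invariance of the Julia set (Proposition~\ref{prop:fatoujuliabasics1}(4)), I conjugate by a suitable element of $\PGL(2,K)$ to assume $\zeta = 0$; this leaves the twist $\tau$ unchanged. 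Since any continuous field automorphism fixes $0$, $\tau(0) = 0$, and the equality $\phi_\tau^{-1}(0) = \{0\}$ becomes $\phi^{-1}(0) = \{0\}$ with total multiplicity $d$. Writing $\phi = P/Q$ in reduced form, this forces $P(z) = \alpha z^d$ with $\alpha \in K^*$ and $Q$ a polynomial with $Q(0) \ne 0$. For $|z|$ sufficiently small, the non-archimedean property of $Q$ gives $|Q(\tau(z))| = |Q(0)|$, whence
\[
|\phi_\tau(z)| = \frac{|\alpha|}{|Q(0)|}|z|^{d\lambda_\tau}.
\]
Because $d\lambda_\tau \ge 2$, the image $\phi_\tau(\overline{D}(0,r))$ is strictly contained in $\overline{D}(0,r)$ for all sufficiently small $r$, so $\zeta = 0$ is classically attracting. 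This contradicts the repelling conclusion above, ruling out type I and leaving only type II.

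The main obstacle is the type I elimination: it requires combining the classical characterization of $\J_I$ at fixed points with an explicit local expansion of $\phi_\tau$ at a totally invariant classical point, and exploiting that $d\lambda_\tau > 1$ forces the local dynamics to contract rather than expand. The type III and IV cases, by contrast, follow directly from the fixed point classification already established in Proposition~\ref{prop:periodic-J}.
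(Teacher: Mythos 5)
Your proof is correct and follows the same broad outline as the paper's: complete invariance makes $\zeta$ a fixed point, and types III and IV are then eliminated directly from Proposition~\ref{prop:periodic-J}. Where you diverge, usefully, is in the type I case. The paper's argument is terse: it observes $\deg_\zeta\phi_\tau = \deg\phi_\tau \ge 2$, states ``it follows that $\zeta$ is repelling,'' and then invokes Proposition~\ref{prop:periodic-J} to conclude type II. But Proposition~\ref{prop:periodic-J}(1) does \emph{not} forbid type I repelling fixed points (they are precisely the type I points of $\J_I$), so that citation alone does not eliminate type I, and for a classical fixed point with high local degree ``it follows that $\zeta$ is repelling'' is not justified — indeed the opposite is true. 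Your explicit local computation closes exactly this gap: total invariance $\phi_\tau^{-1}(0)=\{0\}$ forces, after normalization, $P(z)=\alpha z^d$ with $Q(0)\ne 0$, and the estimate $|\phi_\tau(z)| = (|\alpha|/|Q(0)|)\,|z|^{d\lambda_\tau}$ together with $d\lambda_\tau\ge 2>1$ shows $0$ is classically attracting, contradicting the repelling conclusion coming from $\zeta\in\J_I$. An equivalent route, perhaps closer to the paper's implicit reasoning, would be to note that a totally invariant type I fixed point is an exceptional point, so Lemma~\ref{lem:exception} puts $\phi_\tau$ into twisted polynomial form with $\zeta=\infty$ superattracting; but your self-contained calculation is equally clean. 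One small citation correction: since you are conjugating the Berkovich Julia set, you should invoke Proposition~\ref{prop:fatoujuliabasics}(4) rather than its classical counterpart Proposition~\ref{prop:fatoujuliabasics1}(4); this does not affect the argument.
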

\begin{proof}
Write $\J(\phi_\tau)=\{\zeta\}$. Then $\zeta$ is a fixed point with $\deg_\zeta\phi_\tau=\deg \phi_\tau\ge 2$. It follows that $\zeta$ is repelling. By Proposition \ref{prop:periodic-J} we conclude that $\zeta$ is of type II. 
\end{proof}


\begin{prop}\label{prop:repelling}
Let $\phi_\tau$ be a twisted rational map with $\deg\phi_\tau\ge 2$. Assume $\lambda:=\lambda_\tau\ge 1$. Then 
\begin{enumerate}
\item $\phi_\tau$ has a repelling fixed point in $\sP^1$ and hence $\J(\phi_\tau)\not=\emptyset$.
\item Both $\F(\phi_\tau)$ and $\F_I(\phi_\tau)$ are non-empty. Moreover, if $\lambda>1$, then $\F_I(\phi_\tau)=\bP^1$. 
\end{enumerate}
\end{prop}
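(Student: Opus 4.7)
The plan is to treat the assertions in the order (1), (2c), and then (2a) and (2b); the last two follow from the nonemptiness and openness of $\F(\phi_\tau)$ together with Theorem~\ref{thm:F-J}.

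For (1), I would follow the strategy of Rivera--Letelier and Benedetto \cite{Benedetto01} adapted to the twisted setting. A topological fixed point theorem for continuous self-maps of the compact, uniquely arcwise connected space $\sP^1$ yields a fixed point $\zeta\in\sP^1$ of $\phi_\tau$; the goal is to make $\zeta$ repelling. In the case $\lambda_\tau>1$, part~(2c) below forces every classical fixed point of $\phi_\tau$ to be attracting, while Lemma~\ref{lem:non-rigid-per}(3) guarantees that any fixed point in $\sH^1$ is automatically repelling; it then suffices to exhibit an $\sH^1$ fixed point, which I would extract from the boundary in $\sP^1$ of the basin of an attracting classical fixed point. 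In the case $\lambda_\tau=1$, Lemma~\ref{lem:non-rigid-per}(2) says that an $\sH^1$ fixed point is either indifferent or repelling, and I would exhibit a type II fixed point $\zeta$ of local degree $\ge 2$ via a nested-preimage construction: starting from a small open Berkovich disk $\aD_0$ that avoids the finitely many non-repelling classical fixed points, recursively choose a component $\aD_{n+1}\subsetneq\aD_n$ of $\phi_\tau^{-1}(\aD_n)$, using that $\phi_\tau$ has degree $d\ge 2$ and Proposition~\ref{prop:image-whole}; the intersection is a type II fixed point with $\deg_\zeta\phi_\tau\ge 2$, providing a direction of multiplicity $\ge 2$ while the remaining directions have multiplicity $\ge 1$, hence $\zeta$ is repelling. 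Once a repelling fixed point is obtained, Proposition~\ref{prop:periodic-J}(1) or~(2b) places it in $\J(\phi_\tau)$, so $\J(\phi_\tau)\ne\emptyset$.

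For (2c), I iterate the H\"older estimate of Lemma~\ref{lem:holder}: setting $\lambda:=\lambda_\tau>1$, induction gives
\[
\sigma(\phi_\tau^n(x),\phi_\tau^n(y))\le C^{(\lambda^n-1)/(\lambda-1)}\,\sigma(x,y)^{\lambda^n}.
\]
Writing $e_n:=C^{1/(\lambda-1)}\sigma(\phi_\tau^n(x),\phi_\tau^n(y))$ yields $e_{n+1}\le e_n^\lambda$, so whenever $\sigma(x,y)<C^{-1/(\lambda-1)}$ one has $e_n\to 0$ doubly exponentially. This gives uniform equicontinuity of $\{\phi_\tau^n\}$ at every $x\in\bP^1$, hence $\F_I(\phi_\tau)=\bP^1$.

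For (2a), when $\lambda_\tau>1$, part~(2c) with Theorem~\ref{thm:F-J} gives $\bP^1=\F_I(\phi_\tau)\subseteq\F(\phi_\tau)$, so $\F(\phi_\tau)\ne\emptyset$. When $\lambda_\tau=1$, I would combine part~(1) with tangent-map analysis at the repelling type II fixed point $\zeta$ produced there: the action on $T_\zeta\sP^1$ has degree $\ge 2$, hence admits directions with infinite forward orbit, and an appropriately chosen periodic type II, III, or IV point in the complementary region can be placed in $\F(\phi_\tau)$ by Proposition~\ref{prop:periodic-J}(2a),~(3a), or~(4), respectively, parallel to Benedetto's argument in the rational case. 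For (2b), once $\F(\phi_\tau)$ is nonempty and open in $\sP^1$ it meets the dense subset $\bP^1$, and by Theorem~\ref{thm:F-J} this intersection equals $\F_I(\phi_\tau)$ under the hypothesis $\lambda_\tau\ge 1$. The principal obstacle is the $\lambda_\tau=1$ case of~(1): the nested-preimage construction requires careful choice of $\aD_0$ and verification that strict nesting $\aD_{n+1}\subsetneq\aD_n$ can always be arranged, which mirrors the technical core of Benedetto's rational-map argument; a secondary subtlety is the $\lambda_\tau=1$ case of~(2a), where one must actually produce a Fatou-component-generating periodic point of type II, III, or IV rather than invoking~(2c).
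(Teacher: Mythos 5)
Your plan for (2c) agrees exactly with the paper's iterated H\"older estimate, and your reduction of (2b) to the nonemptiness and openness of $\F(\phi_\tau)$ via density of $\bP^1$ and Theorem~\ref{thm:F-J} is also fine. The real divergence — and where the gaps lie — is in (1) and in (2a) for $\lambda_\tau=1$.

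For (1), the paper simply observes that the proof of the existence of a repelling fixed point for rational maps of degree $\ge 2$ (Benedetto's Theorem~12.5) transports to the twisted setting. Your sketched nested-preimage construction for the $\lambda_\tau=1$ case — starting from a disk $\aD_0$ avoiding the non-repelling classical fixed points and recursively choosing components $\aD_{n+1}\subsetneq\aD_n$ of $\phi_\tau^{-1}(\aD_n)$ — is, as you acknowledge, missing the two essential verifications: why such a strictly nested component must exist at each step, and why the limit would have local degree $\ge 2$. These are not minor details; they are the entire content of the existence theorem, and without an argument there is no proof of (1). (Your $\lambda_\tau>1$ subcase is essentially sound, though you should add the branch where $\phi_\tau$ has no classical fixed point at all: then Wallace's fixed point theorem already yields an $\sH^1$ fixed point, which is repelling by Lemma~\ref{lem:non-rigid-per}(3).)

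For (2a) when $\lambda_\tau=1$, the paper avoids part (1) entirely with a short countability argument: if $\J_I(\phi_\tau)\ne\emptyset$, pick $z_0\in\J_I(\phi_\tau)$, observe that its grand backward orbit $Z=\bigcup_n\phi_\tau^{-n}(\{z_0\})$ is countable, and find a Berkovich open disk $\sB(\vec v)$ with boundary in a small hyperbolic neighborhood of $\xi_G$ that misses $Z$ (such a neighborhood contains uncountably many pairwise disjoint such disks); then $\bigcup_n\phi_\tau^n(\sB(\vec v))$ avoids $z_0$, so $\sB(\vec v)\subset\F(\phi_\tau)$, and Theorem~\ref{thm:F-J} gives $\F_I(\phi_\tau)\ne\emptyset$. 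Your alternative — tangent-map analysis at a repelling type II fixed point produced by (1), then placing a nearby periodic type II/III/IV point "in the complementary region" into $\F(\phi_\tau)$ — is problematic on two counts: it depends on your incomplete argument for (1), so the logical chain does not close; and it is unsubstantiated on its own terms, since the repelling fixed point lies in $\J(\phi_\tau)$ and producing a nearby periodic $\sH^1$ point that falls in $\F(\phi_\tau)$ via Proposition~\ref{prop:periodic-J} is precisely the nontrivial step. The paper's countability argument is both shorter and logically independent of (1), and you should adopt it.
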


\begin{proof}
The proof of \cite[Theorem 12.5]{Ben19} can also be applied to prove statement $(1)$. 

We now prove statement (2). Let us first show that $\F(\phi_\tau)\not=\emptyset$. If $\J_I(\phi_\tau)=\emptyset$, then $\F_I(\phi_\tau)\not=\emptyset$ and thus $\F(\phi_\tau)\not=\emptyset$ by Theorem \ref{thm:F-J}. We now assume that $\J_I(\phi_\tau)\not=\emptyset$. Let $z_0\in J_I(f)$ and consider $Z:=\bigcup_{n=0}^\infty\phi_\tau^{-n}(\{z_0\})$. Then $Z$ is a countable set. Let $U$ be a small $\rho$-neighborhood  of $\xi_G$ in $\sH^1$. Then $U$ contains uncountably many type II points. Thus there exists a point $\xi\in U$ and $\vec{v}\in T_{\xi}\sP^1$ such that $\sB(\vec{v})\cap Z=\emptyset$. It follows that $z_0\not\in\cup_{n=0}^\infty\phi_\tau^n(\sB(\vec{v}))$ and hence $\sB(\vec{v})\subset\F(\phi_\tau)$. Then by Theorem \ref{thm:F-J}, we also have $\F_I(\phi_\tau)=\emptyset$.

The second assertion in statement $(2)$ is an direct application of Lemma \ref{lem:holder}.
 Indeed, considering the constant $C\ge 1$ for $\phi_\tau$ as in Lemma \ref{lem:holder}, we have that for any $x,y\in\bP^1$
$$\sigma(\phi_\tau^n(x),\phi_\tau^n(y))\le C^{1+\lambda+\cdots+\lambda^{n-1}}\sigma(x,y)^{\lambda^n}.$$
Then the assumption $\lambda>1$ implies that $\{\phi_\tau^n\}_{n\ge 1}$ is equicontinuous at any point $x\in\bP^1$. Hence in this case we have $\F_I(\phi_\tau)=\bP^1$.
\end{proof}

\begin{rmk}\label{Fatouempty}
For rational maps, the nonemptyness of classical Fatou set can be deduced from the existence of a nonrepelling fixed point, see \cite[Proposition 4.2]{Ben19}, which is an application of a nonarchimedean version of Holomorphic Fixed-Point Formula, see \cite[Proposition 1.2]{Benedetto01}. We do not expect that such a fixed-point formula holds in our twisted rational map case due to Remark \ref{rmk:not}. So it is unclear to us whether a twisted rational map always has type I nonrepelling fixed point. 
\end{rmk}

We emphasize that when $0<\lambda_\tau<1$, the Berkovich Fatou set $\F(\phi_\tau)$ may contain type I repelling fixed points and the classical Fatou set $\F_I(\phi_\tau)$ may be empty, see Section \ref{ex:monomial}. 


\section{Twisted polynomials}
In this section we first treat the example of twisted monimials and then apply Trucco's method to twisted polynomials. The two subsections are independent.

\subsection{Dynamics of twisted monomials}\label{ex:monomial}

Consider the monomial $f(z)=z^d$ in $K(z)$ with $d\ge 2$. Then the Berkovich Julia set $\J(f)$ is the singleton $\{\xi_G\}$, and $0,\infty$ are (super)attracting points in the classical Fatou set $\F_I(f)\subset\F(f)$. Now pick $\tau\in\Aut^*(K)$ and consider the twisted rational map $f_\tau$. The Gauss point $\xi_G$ is fixed by $f_\tau$. 
Denoting by $\widetilde{K}$ the residue field of $K$ and identifying  $T_{\xi_G}\sP^1$ with $\bP^1(\widetilde{K})$, we have $T_{\xi_G}f_\tau(w)=w^d$ for $w\in\bP^1(\widetilde{K})$.

Direct computations show that for any $x\in]0,\xi_G[$ and any $y\in]\xi_G,\infty[$, we have:
\begin{enumerate}
\item If $0<\lambda_\tau<1/d$, then $f_\tau(x)\in]x,\xi_G[$ and $f_\tau(y)\in]\xi_G,y[$. 
\item If $\lambda_\tau=1/d$, then $f_\tau(x)=x$ and $f_\tau(y)=y$.
\item If $\lambda_\tau>1/d$, then $f_\tau(x)\in]0,x[$ and $f_\tau(y)\in]y,\infty[$.
\end{enumerate}

For the two fixed points $0$ and $\infty$, computing the orbit of a small disk around each of these points, we have: 

\begin{enumerate}
\item If $0<\lambda_\tau<1/d$, then $0$ and $\infty$ are repelling fixed points and are contained in both $\J_I(f_\tau)$ and $\F(f_\tau)$.
\item If $\lambda_\tau=1/d$, then $0$ and $\infty$ are indifferent fixed points and are contained in both $\F_I(f_\tau)$ and $\F(f_\tau)$.
\item If $\lambda_\tau>1/d$, then $0$ and $\infty$ are (super)attracting fixed points and are contained in both $\F_I(f_\tau)$ and $\F(f_\tau)$.
\end{enumerate}

Let us assume that the residue characteristic of $K$ does not divide $d$. Then $f_\tau$ is tame and the ramification locus of $f_\tau$ is $\aR(f_\tau)=[0,\infty]$.
Now pick a Berkovich open disk $\aD\subset\sP^1$. We consider the forward orbit of $\aD$ under $f_\tau$. If $\xi_G\not\in\aD$, then $f_\tau^n(\aD)$ is a disk not containing $\xi_G$ and hence $\aD\subset F(f_\tau)$. In the case where $\xi_G\in\aD$, we have the following:
\begin{enumerate}
\item If $0<\lambda_\tau<1/d$, then $\cup_{n\ge 0}f_\tau^n(\aD)$ omits a closed ball provided that $\aD$ omits $0$ or $\infty$, and hence $\xi_G\in\F(f_\tau)$. In fact, $\xi_G$ is an attracting fixed point as all directions at $\xi_G$ are attracting.
\item If $\lambda_\tau=1/d$, then $\cup_{n\ge 0}f_\tau^n(\aD)$ omits a closed ball provided that $\aD$ omits $0$ or $\infty$, and hence $\xi_G\in\F(f_\tau)$. In fact, $\xi_G$ is an attracting fixed point as at $\xi_G$ the directions containing $0$ and $\infty$ are indifferent and all other directions are attracting.
\item If $\lambda_\tau>1/d$, then $\cup_{n\ge 0}f_\tau^n(\aD)\supseteq\sP^1\setminus\{0,\infty\}$ and hence $\xi_G\in\J(f_\tau)$. In fact, if $1/d<\lambda_\tau<1$, then $\xi_G$ is a saddle fixed point as at $\xi_G$ the directions containing $0$ and $\infty$ are repelling and all other directions are attracting; and if $\lambda_\tau>1$, then $\xi_G$ is a repelling fixed point as at $\xi_G$ all directions are repelling.
\end{enumerate}

In the case where $0<\lambda_\tau<1/d$, we actually have $\J_I(f_\tau)=\bP^1$ while $\F(f_\tau)=\sP^1$.

If the residue characteristic of $K$ divides $d$, then $f_\tau$ is not tame and the segment $[0,\infty]$ is a proper subset of $\aR(f_\tau)$. In fact, for any $\zeta\in[0,\infty]$ and any direction $\vec{v}\in T_\zeta\sP^1$, we have $m_{\vec{v}}(f_\tau)=d$. In this case, we have the following for the fixed point $\xi_G$:

\begin{enumerate}
\item If $0<\lambda_\tau<1/d$ then $\xi_G$ is an attracting fixed point as all directions at $\xi_G$ are attracting.
\item If $\lambda_\tau=1/d$ then $\xi_G$ is an indifferent fixed point as all directions at $\xi_G$ are indifferent.
\item If $\lambda_\tau>1/d$ then $\xi_G$ is a repelling fixed point as all directions at $\xi_G$ are repelling.
\end{enumerate}

\subsection{Dynamics of twisted polynomials}
We will prove Theorem \ref{thm:poly} in this subsection. Let $P\in K[z]$ be a tame polynomial of degree at least $2$ and let $\tau\in\Aut^*(K)$ with $\lambda:=\lambda_\tau>1/\deg P$. We consider the twisted polynomial $P_\tau$. As in \cite{Trucco14}, the dynamics on the Julia set of a polynomial can be described by the so-called Trucco's tree, we extend this description to the above $P_\tau$ provided that $P_\tau$ has no type III repelling fixed point. 

Note that $\infty$ is a superattracting fixed point for $P_\tau$. The \emph{basin of $\infty$} for $P_\tau$ is 
\[\Omega_\infty(P_\tau):=\{ \zeta\in\sP^1: P_\tau^n(\zeta)\rightarrow\infty \} .\]
Observe that $\Omega_\infty(P_\tau) \neq\emptyset$ and contains a neighborhood of $\infty$. We define the \emph{filled Julia set} to be $\K (P_\tau):=\sP^1\setminus \Omega_\infty(P_\tau)$. 
Note that $\J(P_\tau)\subset\K(P_\tau)$. Therefore $\K(P_\tau)\neq\emptyset$ as $\J(P_\tau)\neq\emptyset$ by Proposition \ref{prop:repelling} (1). Moreover, it follows easily that $\J(P_\tau)=\partial \K(P_\tau)=\partial\Omega_\infty(P_\tau)$. 

We will repeatedly use the fact that for any closed Berkovich disk $\overline{\aD}\subsetneq\sP^1\setminus\{\infty\}$, the image $P_\tau(\overline{\aD})\not=\sP^1$ is also a closed Berkovich disk.  Applying an analogue of \cite[Lemma 2.5]{Trucco14}, we have the following. 

\begin{lem}\label{lem:JF}
If $\zeta\in\J(P_\tau)\cap\sH^1$, then for any $\xi\in\sP^1$ with $\zeta\in]\xi,\infty[$, we have $\xi\in\F(P_\tau)\cap\K(P_\tau)$ and $]\zeta,+\infty]\subset\Omega_\infty(P_\tau)$.
\end{lem}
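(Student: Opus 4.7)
The plan is to exploit that $P_\tau$, being the composition of a polynomial and a field automorphism, satisfies $P_\tau^{-1}(\infty)=\{\infty\}$, and to combine this with a tree-structure argument. Let $\vec v_\infty\in T_\zeta\sP^1$ denote the direction toward $\infty$ and set $\overline{\aD}:=\sP^1\setminus\sB(\vec v_\infty)$, the closed Berkovich disk with boundary $\{\zeta\}$ on the non-$\infty$ side of $\zeta$; the hypothesis $\zeta\in{]\xi,\infty[}$ places $\xi$ in some $\sB(\vec w)\subset\overline{\aD}$ with $\vec w\neq \vec v_\infty$, while $]\zeta,\infty]\subset\sB(\vec v_\infty)$.

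For the first assertion I would first check that $P_\tau(\overline{\aD})$ is again a proper closed Berkovich disk, not all of $\sP^1$. Applying Proposition~\ref{prop:image-whole} direction by direction, the only way this could fail is if some $\sB(\vec w)$ with $\vec w\neq \vec v_\infty$ were a bad direction, which would require a preimage of $\infty$ inside $\sB(\vec w)$, contradicting $P_\tau^{-1}(\infty)=\{\infty\}$. The same identity forces $T_\zeta P_\tau(\vec w)\neq \vec v_\infty'$ (the $\infty$-direction at $P_\tau(\zeta)$); together with the surjectivity of $T_\zeta P_\tau$ on tangent directions this yields $P_\tau(\overline{\aD})=\overline{\aD}_1$, the non-$\infty$ closed disk at $P_\tau(\zeta)\in\J(P_\tau)$. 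Iterating, $\overline{\aD}_n:=P_\tau^n(\overline{\aD})$ is a closed Berkovich disk with boundary $P_\tau^n(\zeta)\in\J(P_\tau)\cap\sH^1$ that avoids $\infty$. Choosing an open Berkovich disk $W\ni\infty$ disjoint from the compact set $\J(P_\tau)$, every $P_\tau^n(\zeta)$ sits strictly below the boundary of $W$ in the tree oriented toward $\infty$, so $W\cap \overline{\aD}_n=\emptyset$ for all $n$. Therefore $\bigcup_{n\ge 0} P_\tau^n(\sB(\vec w))\subset\bigcup_{n\ge 0}\overline{\aD}_n$ misses the infinite open set $W$, giving $\sB(\vec w)\subset \F(P_\tau)$ and in particular $\xi\in \F(P_\tau)$; moreover $P_\tau^n(\xi)\notin W$ for all $n$, so $\xi\in \K(P_\tau)$.

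For the second assertion I argue by contradiction using the first. If some $\eta\in{]\zeta,\infty[}$ lay in $\K(P_\tau)$, then since $\K(P_\tau)$ is closed and $\infty\in\Omega_\infty(P_\tau)$, the arc $[\eta,\infty]$ would admit a supremum $\eta^\ast=\sup\{s\in[\eta,\infty]:s\in\K(P_\tau)\}\in\partial\K(P_\tau)=\J(P_\tau)$ with $\eta^\ast\neq\infty$. Because $[\eta,\infty]\setminus\{\infty\}\subset\sH^1$ and $\zeta\notin[\eta,\infty]$, this $\eta^\ast$ lies in $\J(P_\tau)\cap\sH^1\cap{]\zeta,\infty[}$. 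Applying the first assertion with $\eta^\ast$ in the role of $\zeta$ and $\zeta$ in the role of $\xi$ then forces $\zeta\in \F(P_\tau)$, contradicting $\zeta\in\J(P_\tau)$. Hence $]\zeta,\infty[\subset\Omega_\infty(P_\tau)$, and adjoining $\infty$ completes the claim.

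The delicate step is establishing $P_\tau(\overline{\aD})=\overline{\aD}_1$: Proposition~\ref{prop:image-whole} on its own treats only open disks, and one must simultaneously rule out any bad direction $\vec w\neq \vec v_\infty$ at $\zeta$ and identify the image direction, both of which hinge on the polynomial identity $P_\tau^{-1}(\infty)=\{\infty\}$. Once this is granted, the rest is elementary bookkeeping of the tree structure together with compactness of $\J(P_\tau)$.
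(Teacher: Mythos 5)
Your proof is correct and reconstructs the argument that the paper outsources to Trucco's Lemma 2.5: using $P_\tau^{-1}(\infty)=\{\infty\}$ to force the non-$\infty$ closed disk at $\zeta$ to iterate to the non-$\infty$ closed disks at $P_\tau^n(\zeta)\in\J(P_\tau)$, then using compactness of $\J(P_\tau)\subset\sP^1\setminus\{\infty\}$ to keep these disks out of a fixed neighborhood $W$ of $\infty$, and finishing the second assertion by the bootstrap/contradiction via $\eta^\ast$. One remark: the tangent-map derivation of $P_\tau(\overline{\aD})=\overline{\aD}_1$ is a special case of the fact the paper records immediately before the lemma (closed Berkovich disks in $\sP^1\setminus\{\infty\}$ map to closed Berkovich disks under $P_\tau$), so you could have invoked it directly, but your self-contained version is sound.
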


Now we extend the method of Trucco's tree in \cite{Trucco14} to the twisted polynomial $P_\tau$. Let $\overline{\aD}_{min}\subset\sP^1$ be the smallest closed Berkovich disk containing $\K(P_\tau)$. We call the boundary point $\zeta_{P_\tau}\in\sH^1$ of $\overline{\aD}_{min}$ the \emph{base point} of $P_\tau$.  Preimages of $\zeta_{P_\tau}$ satisfy the following properties which will allow us to do the construction of the Trucco's tree. Recall that by definition $P_\tau$ is nonsimple if $\J(P_\tau)$ is not a singleton.

\begin{prop}\label{prop:L}
Suppose that $P_\tau$ is nonsimple. Then the following hold.
\begin{enumerate}
\item $\{\zeta_{P_\tau}\}=P_\tau^{-1}(P_\tau(\zeta_{P_\tau}))$.
\item $P_\tau(\zeta_{P_\tau})\in]\zeta_{P_\tau},\infty[$.
\item $P_\tau^n(\zeta_{P_\tau})\to\infty$, as $n\to\infty$.
\item $\zeta_{P_\tau}$ is of type II. 
\item $P_\tau^{-1}(\zeta_{P_\tau})$ contains at least $2$ elements; moreover, for any two different points $\zeta_1,\zeta_2\in P_\tau^{-1}(\zeta_{P_\tau})$, we have $\zeta_2\not\in]\zeta_1,\zeta_{P_\tau}[$.
\item $P_\tau^{-1}(\zeta_{P_\tau})$ contains points in at least two directions at $\zeta_{P_\tau}$.
\end{enumerate}
\end{prop}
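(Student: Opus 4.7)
The strategy rests on the simple observation that $P_\tau(\overline{\aD}_{min})$ is a closed Berkovich disk (since $\overline{\aD}_{min}\neq\sP^1$) containing $P_\tau(\K(P_\tau))=\K(P_\tau)$, so by minimality of $\overline{\aD}_{min}$ one has $P_\tau(\overline{\aD}_{min})\supseteq\overline{\aD}_{min}$, placing $P_\tau(\zeta_{P_\tau})$ on $[\zeta_{P_\tau},\infty]$; all six statements will be extracted from this together with standard degree-counting for polynomials. Nonsimplicity also implies $\K(P_\tau)$ contains more than one point, so $\overline{\aD}_{min}$ is non-degenerate and $\zeta_{P_\tau}\in\sH^1$.

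For (2), I argue by contradiction: if $P_\tau(\zeta_{P_\tau})=\zeta_{P_\tau}$ then $P_\tau(\overline{\aD}_{min})=\overline{\aD}_{min}$; since all $d$ classical $P_\tau$-preimages of any point in $\K(P_\tau)$ lie in $\K(P_\tau)\subseteq\overline{\aD}_{min}$, the restriction $P_\tau|_{\overline{\aD}_{min}}$ already has full degree $d$, so $P_\tau^{-1}(\overline{\aD}_{min})=\overline{\aD}_{min}$; total invariance then gives $\K(P_\tau)=\overline{\aD}_{min}$ and $\J(P_\tau)=\{\zeta_{P_\tau}\}$, contradicting nonsimplicity. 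Part (3) is immediate since $]\zeta_{P_\tau},\infty[\subseteq\sP^1\setminus\overline{\aD}_{min}\subseteq\Omega_\infty(P_\tau)$. For (1), setting $\overline{E}:=P_\tau(\overline{\aD}_{min})\supsetneq\overline{\aD}_{min}$, the same degree count shows $P_\tau|_{\overline{\aD}_{min}}\colon\overline{\aD}_{min}\to\overline{E}$ has full degree $d$, hence $P_\tau^{-1}(\overline{E})=\overline{\aD}_{min}$ and therefore $P_\tau^{-1}(P_\tau(\zeta_{P_\tau}))=\{\zeta_{P_\tau}\}$.

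For (5), (6) and (4), I decompose $P_\tau^{-1}(\overline{\aD}_{min})\subsetneq\overline{\aD}_{min}$ as a disjoint union of closed subdisks $\overline{\cD}_1=\overline{D}(c_1,s_1),\dots,\overline{\cD}_k=\overline{D}(c_k,s_k)\subsetneq\overline{\aD}_{min}=\overline{D}(a,r)$; each $\overline{\cD}_i$ must meet $\K(P_\tau)$ (otherwise its points lie in the basin of $\infty$ while its $P_\tau$-image $\overline{\aD}_{min}$ still meets $\K$, contradiction), and the boundaries of the $\overline{\cD}_i$ are exactly $P_\tau^{-1}(\zeta_{P_\tau})$. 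If $k=1$, then $\overline{\cD}_1\subsetneq\overline{\aD}_{min}$ contains $\K$, contradicting minimality, so $k\ge 2$, which is the first part of (5); the nesting claim is automatic because boundaries of disjoint subdisks lie in distinct maximal subtrees below $\zeta_{P_\tau}$. For (6), if all $\overline{\cD}_i$ lay in a single direction at $\zeta_{P_\tau}$, then $|c_i-c_j|<r$ for all $i,j$ (by the same-residue criterion when $\zeta_{P_\tau}$ is type~II, and because $|c_i-c_j|\in|K^\times|$ while $r\notin|K^\times|$ when it is type~III); combined with $s_i<r$, this forces $\bigcup\overline{\cD}_i\supseteq\K$ into a closed disk of radius $<r$, again contradicting minimality. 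Finally (4) follows at once from (6): preimages occupying at least two distinct downward directions at $\zeta_{P_\tau}$ rules out types~I, III and IV (each of which has at most one such direction), leaving type~II. The main obstacle will be the radius bookkeeping in (6), which must be handled uniformly across the possible types of $\zeta_{P_\tau}$.
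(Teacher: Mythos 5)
Your proof is correct, and it is more self-contained than the paper's, which dispatches parts (1), (2), (5), (6) to Trucco's Proposition 3.4 and only argues (3) and (4) directly. Where you do overlap with the paper, the routes diverge in two places. For (3), you extract escape to $\infty$ purely from the definition of $\Omega_\infty(P_\tau)$: since $]\zeta_{P_\tau},\infty[\,\subseteq\sP^1\setminus\overline{\aD}_{min}\subseteq\Omega_\infty(P_\tau)$ and $\Omega_\infty$ is backward invariant, (2) alone forces $\zeta_{P_\tau}\in\Omega_\infty$. The paper instead runs the hyperbolic-metric expansion argument $\rho(P_\tau^{n+1}(\zeta_{P_\tau}),P_\tau^{n+2}(\zeta_{P_\tau}))=(\lambda\deg P_\tau)\,\rho(P_\tau^n(\zeta_{P_\tau}),P_\tau^{n+1}(\zeta_{P_\tau}))$, which uses the standing hypothesis $\lambda_\tau\deg P>1$ and, beyond the bare claim, gives the geometric escape rate used later in the Trucco-tree analysis; your version is simpler but discards this quantitative information. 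For (4), you deduce type II as a corollary of (6) by counting downward directions, which is clean. The paper argues (4) directly (and independently of (5), (6)): if $\zeta_{P_\tau}$ were of type III, the closedness of $\K(P_\tau)$ together with minimality would force $\zeta_{P_\tau}\in\partial\K(P_\tau)=\J(P_\tau)$, contradicting (2) and backward invariance of the Julia set. Both approaches are valid; yours ties (4) to the combinatorial structure of preimages, while the paper's isolates it as an independent topological fact. One small point worth spelling out when you write this up: the decomposition of $P_\tau^{-1}(\overline{\aD}_{min})$ into pairwise disjoint \emph{closed disks} (as opposed to general strict affinoids) is what makes the nesting claim in (5) ``automatic,'' and this uses that $P$ is a polynomial so $\infty$ is its only pole; a removed open subdisk in a hypothetical multi-boundary component would have to surject onto a direction at $\zeta_{P_\tau}$ and either hit $\infty$ (impossible) or glue to a neighboring component (contradicting disjointness).
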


\begin{proof}
All statements except statements (3) and (4) can be obtained by similar arguments as in \cite[Proposition 3.4]{Trucco14}. 



For statement $(3)$, observe that for all $n\ge 0$, 
$$\rho(P_\tau^{n+1}(\zeta_{P_\tau}), P_\tau^{n+2}(\zeta_{P_\tau}))=(\lambda\deg P_\tau)\cdot\rho(P_\tau^n(\zeta_{P_\tau}), P_\tau^{n+1}(\zeta_{P_\tau})).$$
Since $P_\tau^n(\zeta_{P_\tau})\in]P_\tau^{n-1}(\zeta_{P_\tau}), \infty[$ by statement (2) for all $n\ge 1$, we conclude that 
$$\rho(\zeta_{P_\tau}, P_\tau^n(\zeta_{P_\tau}))=\left(\sum_{j=0}^{n-1}(\lambda\deg P_\tau)^j\right)\rho(\zeta_{P_\tau}, P_\tau(\zeta_{P_\tau})).$$
Since $\lambda\deg P_\tau>1$, we have $\rho(\zeta_{P_\tau}, P_\tau^n(\zeta_{P_\tau}))\to\infty\ \ \text{as}\ \ n\to\infty$.
Thus statement $(3)$ follows. 

Let us show statement $(4)$. Suppose on the contrary that $\zeta_{P_\tau}$ is not of type II. Then $\zeta_{P_\tau}$ is of type III. By the definition of $\zeta_{P_\tau}$, since $\zeta_{P_\tau}$ is of type III and $\K(P_\tau)$ is closed, we conclude that $\zeta_{P_\tau}\in\partial\K(P_\tau)$. Consequently $\zeta_{P_\tau}\in\J(P_\tau)$, which contradicts statement $(2)$.
\end{proof}

\begin{rmk}
If $P_\tau$ is nonsimple, then Proposition \ref{prop:L} implies that $\zeta_{P_\tau}$ is the smallest point (with respect to the partial order of $\sP^1$) with $\deg_{\zeta_{P_\tau}}P_\tau=\deg P_\tau$ such that $P_\tau(\zeta_{P_\tau})\in[\zeta_{P_\tau}, \infty[$. Let $\zeta_P$ be the base point for the polynomial $P$. It follows that $\zeta_{P_\tau}=\tau^{-1}(\zeta_P)$.
\end{rmk}

Proposition \ref{prop:periodic-J} and Proposition \ref{prop:L} (4) immediately imply the following.
\begin{cor}
If $\zeta_{P_\tau}$ is of type III, then $\J(P_\tau)=\{\zeta_{P_\tau}\}$.
\end{cor}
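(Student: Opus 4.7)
The plan is to deduce the corollary by taking the contrapositive of Proposition \ref{prop:L}(4) and then identifying the resulting singleton Julia set with $\zeta_{P_\tau}$ via a boundary argument that was already tacitly used inside the proof of Proposition \ref{prop:L}(4).

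First I would observe that a boundary point of a closed Berkovich disk can only be of type II or type III. Consequently the statement of Proposition \ref{prop:L}(4) (``$P_\tau$ nonsimple $\Rightarrow$ $\zeta_{P_\tau}$ of type II'') is equivalent by contrapositive to ``$\zeta_{P_\tau}$ of type III $\Rightarrow$ $P_\tau$ is simple,'' so under our hypothesis $\J(P_\tau)$ is a single point.

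Next I would show $\zeta_{P_\tau}\in\J(P_\tau)$, which forces the singleton to be $\{\zeta_{P_\tau}\}$. By the minimality of $\overline{\aD}_{min}$, the closed set $\K(P_\tau)$ must accumulate at the type III boundary point $\zeta_{P_\tau}$; otherwise, using compactness of $\K(P_\tau)$ together with the tree structure of $\sP^1$, one could replace $\overline{\aD}_{min}$ by a strictly smaller closed disk still containing $\K(P_\tau)$, contradicting minimality. Closedness of $\K(P_\tau)$ then yields $\zeta_{P_\tau}\in\K(P_\tau)$. On the other hand, every point in the unique tangent direction at $\zeta_{P_\tau}$ pointing towards $\infty$ lies in the open basin $\Omega_\infty(P_\tau)\subset\F(P_\tau)$. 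Hence $\zeta_{P_\tau}\in\partial\K(P_\tau)=\J(P_\tau)$, and the proof is complete.

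The main delicate point will be the accumulation argument in the second step: making precise, using only the tree topology and the two-direction structure at a type III point, that failure of $\K(P_\tau)$ to accumulate at $\zeta_{P_\tau}$ would produce a strictly smaller closed Berkovich disk containing $\K(P_\tau)$. This is essentially the same observation invoked in the proof of Proposition \ref{prop:L}(4), and Proposition \ref{prop:periodic-J} can serve as an a posteriori consistency check: the resulting fixed point $\zeta_{P_\tau}\in\J(P_\tau)$ of type III must be repelling by Proposition \ref{prop:periodic-J}(3), compatibly with the simple-Julia conclusion.
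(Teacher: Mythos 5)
Your argument is correct and essentially mirrors the paper's, which likewise deduces the corollary from the contrapositive of Proposition~\ref{prop:L}(4) together with the boundary observation (already made inside the proof of that proposition) that a type~III base point $\zeta_{P_\tau}$ must lie in $\partial\K(P_\tau)=\J(P_\tau)$. One small caveat: your closing consistency check invokes Proposition~\ref{prop:periodic-J}(3), which is stated only under $\lambda_\tau\ge 1$, whereas the twisted-polynomial section assumes only $\lambda_\tau>1/\deg P$; since that remark is not used in your actual argument, the proof itself is unaffected.
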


For $n\ge 0$, define $\LL_n$ to be the finite set $P_\tau^{-n}(\zeta_{P_\tau})$. Following \cite[Definition 3.5]{Trucco14}, we say that a decreasing sequence $\{L_n\}_{n\ge 0}$ of points in $\sP^1$ such that $L_n\in\LL_n$ is a \emph{dynamical sequence} of $P_\tau$. As in \cite[Proposition 3.6]{Trucco14}, we can describe the Julia set $\J(P_\tau)$ as follows.

\begin{prop}
We have 
$$\J(P_\tau)=\left\{\lim_{n\to\infty}L_n: \{L_n\}_{n\ge 0}\ \ \text{is a dynamical sequence of}\ \ P_\tau\right\}.$$
\end{prop}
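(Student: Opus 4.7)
The plan is to prove both inclusions, following the pattern of \cite[Proposition 3.6]{Trucco14} adapted to the twisted setting. Write $d=\deg P_\tau$ and $\lambda=\lambda_\tau$, so $\lambda d>1$. For a dynamical sequence $\{L_n\}_{n\ge 0}$, let $\overline{\aD}_n\subset\sP^1\setminus\{\infty\}$ denote the closed Berkovich disk bounded by $L_n$ not containing $\infty$, so that the decreasing condition yields $\overline{\aD}_0\supseteq\overline{\aD}_1\supseteq\cdots$. Compactness of $\sP^1$ and monotonicity in the tree partial order (rooted at $\infty$) ensure that $L_n$ converges to a unique limit $L_\infty\in\sP^1$.

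For the inclusion $(\supseteq)$, I would verify $L_\infty\in\partial\K(P_\tau)=\J(P_\tau)$ by showing that every neighborhood of $L_\infty$ meets both $\K(P_\tau)$ and its complement $\Omega_\infty(P_\tau)$. Each $L_n$ itself lies in $\Omega_\infty(P_\tau)$: indeed $P_\tau^n(L_n)=\zeta_{P_\tau}$, whose forward orbit escapes to $\infty$ by Proposition \ref{prop:L}(3). Since $L_n\to L_\infty$, every neighborhood of $L_\infty$ meets $\Omega_\infty$. On the other hand, the polynomial $P_\tau^n$ surjects $\overline{\aD}_n$ onto $\overline{\aD}_{min}$, and complete invariance $P_\tau^{-1}(\K(P_\tau))=\K(P_\tau)\subset\overline{\aD}_{min}$ forces $\overline{\aD}_n\cap\K(P_\tau)\neq\emptyset$ for every $n$. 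Since any open neighborhood of $L_\infty$ in the tree topology eventually engulfs $\overline{\aD}_n$ for large $n$, such a neighborhood meets $\K(P_\tau)$ as well.

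For the inclusion $(\subseteq)$, given $\zeta\in\J(P_\tau)\subseteq\K(P_\tau)\subset\overline{\aD}_{min}$, I would construct a dynamical sequence converging to $\zeta$ inductively along the segment $[\zeta,\zeta_{P_\tau}]$. Set $L_0=\zeta_{P_\tau}$; assuming $L_n\in\LL_n\cap[\zeta,L_0]$ is chosen, let $\vec v_n\in T_{L_n}\sP^1$ be the direction containing $\zeta$ and select $L_{n+1}\in\LL_{n+1}\cap\sB(\vec v_n)$. Existence of such $L_{n+1}$ comes from the density of $\bigcup_m\LL_m$ in $\J(P_\tau)$ provided by the equidistribution Theorem \ref{thm:equ} (applied to the non-exceptional point $\zeta_{P_\tau}$), combined with an iterated version of Proposition \ref{prop:L}(5) ensuring that at most one representative of $\LL_{n+1}$ lies on each segment issuing from $L_n$ toward a $\J(P_\tau)$-point. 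The resulting sequence is strictly monotone in $[\zeta,L_0]$, hence converges to some $L_\infty\in[\zeta,L_0]$, which by the first inclusion lies in $\J(P_\tau)$.

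The main obstacle is the final step of verifying $L_\infty=\zeta$. Assume for contradiction $L_\infty\neq\zeta$, so $L_\infty\in\sH^1\cap\,]\zeta,L_0]$. The expansion estimate of Proposition \ref{prop:multi-basic}(1) multiplies hyperbolic length of unramified segments by $(\lambda d)^n$ under $P_\tau^n$; combined with the computation in the proof of Proposition \ref{prop:L}(3) yielding $\rho(\zeta_{P_\tau},P_\tau^n(\zeta_{P_\tau}))\sim(\lambda d)^n\rho(L_0,P_\tau(L_0))$, and with Lemma \ref{lem:JF} applied to the would-be intermediate Julia point $L_\infty$, one can rule out accumulation strictly inside $]\zeta,L_0]$, so $L_\infty=\zeta$. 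Carefully handling ramification of $P_\tau^n$ along $[L_n,L_0]$ is the technical heart of the argument; alternatively, one may invoke density of $\bigcup_n\LL_n$ near $\zeta$ together with a diagonal extraction to produce a dynamical sequence whose limit is forced to be $\zeta$.
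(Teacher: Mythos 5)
Your argument for the inclusion $\supseteq$ reaches the right conclusion but through a flawed step. The claim that ``any open neighborhood of $L_\infty$ in the tree topology eventually engulfs $\overline{\aD}_n$ for large $n$'' is only true when the diameters of $\overline{\aD}_n$ tend to $0$, i.e.\ when $L_\infty$ is of type I or IV. If the diameters stabilize at some $r>0$, then $\bigcap_n\overline{\aD}_n$ is a nondegenerate closed disk with boundary $L_\infty$, and weak-topology neighborhoods of $L_\infty$ need not contain any $\overline{\aD}_n$. In the twisted setting (especially when $\lambda_\tau<1$) you cannot assume $\J(P_\tau)\subset\P^1$, so this is not a side case. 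The cleaner route is direct: since $L_\infty\in\bigcap_n\overline{\aD}_n$ and $P_\tau^n(\overline{\aD}_n)=\overline{\aD}_{min}$, the orbit $P_\tau^n(L_\infty)$ stays in $\overline{\aD}_{min}$, so $L_\infty\in\K(P_\tau)$; combined with the observation that each $L_n\in\Omega_\infty(P_\tau)$ accumulates at $L_\infty$, this forces $L_\infty\in\partial\K(P_\tau)=\J(P_\tau)$.

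The inclusion $\subseteq$ has a genuine gap in the inductive step. Equidistribution guarantees that, for \emph{large} $m$, the set $\LL_m$ is dense near any Julia point, but it gives you no control over which direction $L_m$ lies in relative to $L_n$, nor does it produce a point of $\LL_{n+1}$ (the exact next level) inside $\sB(\vec v_n)$. The intended argument, following Trucco, is structural rather than measure-theoretic: one shows that $P_\tau^{-1}(\overline{\aD}_{min})$ is a finite union of closed Berkovich disks contained in $\overline{\aD}_{min}$, each bounded by an element of $\LL_1$; this uses Proposition~\ref{prop:L}(1), which gives $P_\tau^{-1}(P_\tau(\overline{\aD}_{min}))=\overline{\aD}_{min}$ together with the fact that the image of a closed disk omitting $\infty$ under $P_\tau$ is a closed disk omitting $\infty$. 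Iterating, $P_\tau^{-n}(\overline{\aD}_{min})$ is a finite union of closed disks bounded by elements of $\LL_n$, and since $\zeta\in\K(P_\tau)=\bigcap_n P_\tau^{-n}(\K(P_\tau))\subset P_\tau^{-n}(\overline{\aD}_{min})$, there is a unique such disk containing $\zeta$, whose boundary point $L_n$ gives the desired decreasing sequence without any appeal to Theorem~\ref{thm:equ}. Your final step ruling out $L_\infty\neq\zeta$ via Lemma~\ref{lem:JF} is correct and is actually much simpler than you make it sound: if $L_\infty\in\J(P_\tau)\cap\sH^1$ with $L_\infty\in\,]\zeta,\infty[$, then Lemma~\ref{lem:JF} directly puts $\zeta\in\F(P_\tau)$, a contradiction; the detour through the expansion estimate and ramification bookkeeping is unnecessary.
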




Now we prove Theorem \ref{thm:poly}.
\begin{proof}[Proof of Theorem \ref{thm:poly}]
Since $P_\tau$ is tame and all its critical points are in $\Omega_\infty(P_\tau)$, we can choose $N\gg1$ such that each $\zeta\in\LL_N$ has $\deg P_\tau$ preimages under $P_\tau$.  Now we associate to each element $\xi\in\LL_{N+1}$ an integer $\chi(\xi)\in\{1, \dots, \deg P_\tau\}$ so that if two distinct points $\xi_1,\xi_2\in\LL_{N+1}$ have the same image $P_\tau(\xi_1)=P_\tau(\xi_2)\in\LL_{N}$, then $\chi(\xi_1)\not=\chi(\xi_2)$. Now define a function 
$$\iota:\J(P_\tau)\to\{1, \dots, \deg P_\tau\}^{\N\cup\{0\}}$$
as follows: if $\{L_n(x)\}_{n\ge 0}$ is a dynamical sequence converging to a point $x\in\J(P_\tau)$, then the image $\iota(x)$ is $(i_0, i_1, \dots)$, where $i_j=\chi(P_\tau^j(L_{N+1+j}(x)))$. Then $\iota$ gives the desired topological conjugacy.
\end{proof}

\begin{rmk}
For a tame polynomial $P$ with only escaping critical points, the Berkovich Julia set $\J(P)$ is contained in $\bP^1$, on which the dynamics is topologically conjugate to the one-sided shift on $d$ symbols, see \cite[Theorem 3.1]{Kiwi06}. However, due to the factor $\lambda\ge 1/\deg P$, in our case, $\J(P_\tau)$ may not be contained in $\bP^1$; in fact, by \eqref{eq:dis}, a direct distance computation shows that $\J(P_\tau)\subset\bP^1$ if and only if $\lambda\ge1$. 
\end{rmk}

Note that we can define the \emph{Trucco's tree} for $P_\tau$ as the following subtree of $\sP^1$:
\[
\T_{P_\tau}:=\mathrm{Hull}\left(\bigcup_{n\ge 0}\LL_n\right). 
\]
The vertices of $\T_{P_\tau}$ are grand orbits of points of valence at least three.


\section{Equidistribution}\label{sec:equ}

In this section we construct a canonical measure for twisted rational maps and establish Theorem \ref{thm:equ}. We apply essentially the same procedure as in \cite{Jon15}. Let $\phi_\tau$ be a twisted rational map of degree  $d\ge 2$.
\subsection{Exceptional set}
A point $x\in\P^1$ is an \emph{exceptional point} of $\phi_\tau$ if the grand orbit of $x$ under $\phi_\tau$ is finite. We denote by $E_{\phi_\tau}\subset\P^1_K$ the set of exceptional points of $\phi_\tau$. 
\begin{lem}\label{lem:exception}
Let $\phi_\tau$ be a twisted rational map of degree at least $2$. Then $\#E_{\phi_\tau}\le 2$. More precisely,

\begin{enumerate}
\item If $\#E_{\phi_\tau}=1$, then $\phi_\tau$ is M\"obius conjugate to $P_\tau$ for some polynomial $P\in K[z]$ of degree $\deg\phi_\tau$.
\item If $\#E_{\phi_\tau}=2$, then $\phi_\tau$ is M\"obius conjugate to $Q_\tau$ for some monomial $Q\in K(z)$ of degree $\pm\deg\phi_\tau$.
\end{enumerate}
\end{lem}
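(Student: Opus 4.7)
The plan is to adapt the classical rational-map argument, leveraging two features of twisted rational maps: (i) $\tau$ is a bijection of $\P^1$ fixing $0$ and $\infty$, and $\deg_y\phi_\tau = \deg_{\tau(y)}\phi$; and (ii) by Lemma~\ref{cor:composition}, any Möbius conjugate $M\circ\phi_\tau\circ M^{-1}$ is again of the form $\widetilde\phi_\tau$ with the same $\tau$, where a short computation (substituting $w=\tau(z)$) gives $\widetilde\phi = M\circ\phi\circ\widehat\tau(M^{-1})$. Throughout I write $d := \deg\phi_\tau \geq 2$ and $E := E_{\phi_\tau}$.

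I would first show that $E$ is completely invariant and that every point of $E$ is totally ramified. Since $E$ is a union of finite grand orbits, $\phi_\tau(E) = E = \phi_\tau^{-1}(E)$. Restricted to any single grand orbit $G \subseteq E$, the map $\phi_\tau|_G\colon G \to G$ is a surjection of a finite set, hence a bijection; combined with $\phi_\tau^{-1}(y) \subseteq G$ for $y\in G$ (backward invariance), this gives $|\phi_\tau^{-1}(y)| = 1$ for every $y\in E$. Proposition~\ref{prop:deg-basic}(2) then forces the unique preimage to have local degree $d$, so every point of $E$ is totally ramified.

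Next I would invoke Riemann–Hurwitz, which transfers from $\phi$ to $\phi_\tau$ because $\deg_y\phi_\tau = \deg_{\tau(y)}\phi$ and $\tau$ permutes $\P^1$ bijectively:
\[
\sum_{y\in\P^1}(\deg_y\phi_\tau - 1) \;=\; \sum_{y'\in\P^1}(\deg_{y'}\phi - 1) \;=\; 2d-2.
\]
Each $y\in E$ contributes $d-1$ to the left side by the preceding step, so $|E|(d-1)\le 2(d-1)$, hence $|E|\le 2$.

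Finally I would run a case analysis after Möbius conjugation, using $\tau(0)=0$ and $\tau(\infty)=\infty$ throughout and the preimage identity $\widetilde\phi^{-1}(x) = \tau(\widetilde\phi_\tau^{-1}(x))$. If $E = \{x_0\}$, conjugate $x_0$ to $\infty$; then $\widetilde\phi_\tau^{-1}(\infty)=\{\infty\}$ rewrites as $\widetilde\phi^{-1}(\infty) = \{\infty\}$, so $\widetilde\phi$ is a polynomial of degree $d$. If $|E|=2$, move the two points to $\{0,\infty\}$: when both are fixed, $\widetilde\phi$ fixes $0$ and $\infty$ totally ramified, forcing $\widetilde\phi(z) = cz^d$; when they form a $2$-cycle, $\widetilde\phi$ swaps $0\leftrightarrow\infty$ totally ramified, forcing $\widetilde\phi(z) = c/z^d$. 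Beyond this $\tau$-bookkeeping, which is immediate because $\tau$ is a field automorphism, I do not foresee any substantive obstacle—the remainder of the argument is identical to the rational-map case.
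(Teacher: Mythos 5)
Your proof is correct and follows essentially the same approach as the paper's. Both arguments reduce to showing exceptional points are totally ramified, transfer the bound from $\phi$ to $\phi_\tau$ via the degree-preserving bijection $\tau$ (you make the Riemann--Hurwitz count explicit where the paper simply invokes the fact that the totally ramified loci of $\phi$ and $\phi_\tau$ have the same cardinality), and then conjugate to put the exceptional set at $\{\infty\}$ or $\{0,\infty\}$ and exploit $\tau(0)=0$, $\tau(\infty)=\infty$ to conclude that the associated rational map is a polynomial or monomial --- where you unwind the preimage identity $\widetilde\phi^{-1}(x)=\tau(\widetilde\phi_\tau^{-1}(x))$ directly, the paper instead cites the corresponding characterization of exceptional sets of rational maps, but this is the same content.
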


\begin{proof}
Let $x\in E_{\phi_\tau}$. Then the grand orbit of $x$ is finite. It follows that there exists $n\ge1$ such that $\phi_\tau^{-n}(\{x\})=\{x\}$. Hence $\phi_\tau^n$ and $\phi_\tau$ are totally ramified at $x$. As the number of totally ramified points of $\phi_\tau$ is the same as $\phi$ (see Lemma \ref{lem:ram}), we have $\#E_{\phi_\tau}\le 2$.

Assume that $\#E_{\phi_\tau}=1$ and pick $M_1\in\PGL(2,K)$ such that $E_{\phi_\tau}=\{M_1(\infty)\}$. Consider the map $M_1^{-1}\circ \phi_\tau\circ M_1$; we write it as $\psi_\tau$ for some rational map $\psi\in K(z)$ (see Lemma \ref{cor:composition}). Since $\tau(\infty)=\infty$, we conclude that $\infty$ is the unique exceptional point of $\psi$. Then by \cite[Theorem 1.19]{Ben19}, the map $\psi$ is a polynomial of degree $\deg\psi=\deg\phi_\tau$. 

Assume that $\#E_{\phi_\tau}=2$ and pick $M_2\in\PGL(2,K)$ such that $E_{\phi_\tau}=\{M_2(\infty), M_2(0)\}$. Consider the map $M_2^{-1}\circ\Phi\circ M_2$; again we write it as $\psi_\tau$ for some rational map $\psi\in K(z)$. Since $\tau(\infty)=\infty$ and $\tau(0)=0$, we conclude that $\infty$ and $0$ are the only exceptional points of $\psi$. Then \cite[Theorem 1.19]{Ben19} says that $\psi$ is a monomial of degree $\deg\psi=\pm\deg\phi_\tau$. 
\end{proof}

\subsection{Canonical measure}
We first introduce some preliminaries by following the presentation in \cite{Jon15}.

Let $\Gamma\subset\sH^1$ be a finite subtree, and for $\zeta\in\Gamma$, denote by $T_\zeta\Gamma$ the set of directions at $\zeta$ containing points in $\Gamma$. For a function $f:\Gamma\to\R$ and a direction $\vec{v}\in T_\zeta\Gamma$, denote by $D_{\vec{v}}f$ the directional derivative of $f$ in $\vec{v}$. If $f$ has bounded differential variation, then the \emph{Laplacian} of $f$ is 
$$\Delta_\Gamma(f)=\sum_{\zeta\in\Gamma}\left(\sum_{{\vec{v}}\in T_\zeta\Gamma}D_{\vec{v}}(f)\right)\delta_\zeta,$$
where $\delta_\zeta$ is the Dirac measure at $\zeta$.

Let $\nu_0$ be a finite atomic measure on $\Gamma$. Denote by $\SH(\Gamma, \nu_0)$ the set of continuous functions $f:\Gamma\to\R$ that are convex on any segment disjoint from the support of $\nu_0$ and such that, for any $\zeta\in\Gamma$, 
$$\nu_0(\zeta)+\sum_{\vec{v}\in T_\zeta\Gamma}D_{\vec{v}} f\ge 0.$$
Each element in  $\SH(\Gamma, \nu_0)$ is called a \emph{$\nu_0$-subharmonic function}.

By using approximation by finite trees, one can extend both the notions of Laplacian and subharmonic functions to the closure $\overline{U}$ of a domain $U\subset\sP^1$. The corresponding notations are $\Delta_{\overline{U}}(f)$ and $\SH(\overline{U}, \nu_0)$, where $\nu_0$ is a finite atomic measure supported on $\sH^1\cap U$. We write $\Delta_{\sP^1}(f)$ simply as $\Delta(f)$ and denote by $\SH^0(\overline{U}, \nu_0)$ the compact subset consisting of $f\in\SH(\overline{U}, \nu_0)$ for which $\max f=0$.

Denote by $C^0(\overline{U})$ the set of real-valued continuous functions on $\overline{U}$. 
Associated to a twisted rational map $\phi_\tau$ is a push-forward operator on continuous functions:
\[(\phi_\tau)_* H(\zeta)=\sum_{\phi_\tau(\xi)=\zeta}\deg_\zeta(\phi_\tau)H(\xi), \quad \text{for any}\ H\in C^{0}(\sP^1).\]
 The pull-back action of $\phi_\tau$ on Radon measures is defined by duality: for a Radon measure $\nu$ on $\sP^1$ we define $\phi_\tau^*\nu$ by
\[\left\langle \phi_\tau^*\nu,H\right\rangle=\left\langle \nu,(\phi_\tau)_* H\right\rangle.\] 
Note that the pull-back of a Dirac mass $\delta_\zeta$ at $\zeta\in\sP^1$ is 
\begin{align}\label{eq:measure}
\phi_\tau^*\delta_\zeta=\sum_{\phi_\tau(\xi)=\zeta}\deg_\xi\phi_\tau\ \delta_\xi.
\end{align}
It follows from Proposition \ref{prop:deg-basic} that $\phi_\tau^*\delta_\zeta(\sP^1)=d$.

The following result concerns the bull-back of $f\in \SH^0(\sP^1,\nu_0)$ and its Laplacian.
 
\begin{lem}\label{lem:laplacianpullback}
Let $\phi_\tau$  be a twisted rational map of degree at least $1$.
  If $f\in \SH^0(\sP^1,\nu_0)$ for  a finite atomic measure $\nu_0$ supported on $\sH^1$, then $\phi_\tau^* f\in \SH^0(\sP^1,\phi_\tau^*\nu_0)$ and
\[\Delta(\phi_\tau^* f)=\lambda_\tau\phi_\tau^*(\Delta (f)).\]
\end{lem}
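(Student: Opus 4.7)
The plan is to decompose $\phi_\tau=\phi\circ\tau$ and handle the rational part $\phi$ and the twist $\tau$ separately, since on continuous functions $\phi_\tau^{*}=\tau^{*}\circ\phi^{*}$. For the rational map $\phi$, both the identity $\Delta(\phi^{*}g)=\phi^{*}\Delta g$ and the preservation of the $\SH^0$ condition (namely $\phi^{*}g\in\SH^0(\sP^1,\phi^{*}\mu)$ whenever $g\in\SH^0(\sP^1,\mu)$) are classical; see \cite[Section 5]{Jon15} or \cite[Chapter 9]{Baker10}. It therefore suffices to establish the analogous statement for the field automorphism $\tau$.

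The crux is the twisted Laplacian formula $\Delta(\tau^{*}g)=\lambda_\tau\,\tau^{*}\Delta g$. By Lemma \ref{lem:fund}(1), $\tau$ is a homeomorphism of $\sP^1$ that scales the hyperbolic metric $\rho$ on $\sH^1$ by the factor $\lambda_\tau$, and by Lemma \ref{lem:fund1} the tangent map $T_\zeta\tau:T_\zeta\sP^1\to T_{\tau(\zeta)}\sP^1$ is a bijection at every $\zeta\in\sH^1$. Parametrizing a short segment in direction $\vec v\in T_\zeta\sP^1$ by hyperbolic arc-length $t$, its $\tau$-image is a segment in direction $T_\zeta\tau(\vec v)$ of arc-length $\lambda_\tau t$, whence
\[
D_{\vec v}(\tau^{*}g)(\zeta)=\lambda_\tau\cdot D_{T_\zeta\tau(\vec v)}g(\tau(\zeta)).
\]
Summing over $\vec v\in T_\zeta\sP^1$ and using the bijectivity of $T_\zeta\tau$ gives $\Delta(\tau^{*}g)(\{\zeta\})=\lambda_\tau\,\Delta g(\{\tau(\zeta)\})$. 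Since $\tau$ has local degree one, $\tau^{*}$ on measures is simply pre-composition with $\tau$, so $\Delta g(\{\tau(\zeta)\})=(\tau^{*}\Delta g)(\{\zeta\})$, yielding the claimed formula.

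Composing the two pieces gives
\[
\Delta(\phi_\tau^{*}f)=\Delta(\tau^{*}(\phi^{*}f))=\lambda_\tau\,\tau^{*}\Delta(\phi^{*}f)=\lambda_\tau\,\tau^{*}\phi^{*}\Delta f=\lambda_\tau\,\phi_\tau^{*}\Delta f,
\]
which is the Laplacian identity in the lemma. For the $\SH^0$ claim, continuity of $\phi_\tau^{*}f$ and $\max\phi_\tau^{*}f=0$ are immediate from continuity and surjectivity of $\phi_\tau$, while convexity on segments disjoint from $\mathrm{supp}(\phi_\tau^{*}\nu_0)=\phi_\tau^{-1}(\mathrm{supp}(\nu_0))$ follows from Proposition \ref{prop:multi-basic}(1): $\phi_\tau$ sends such segments linearly onto segments (with $\rho$-distances scaled by $\lambda_\tau m_{\vec v}(\phi_\tau)$), and convexity is preserved by linear reparametrization. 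The pointwise inequality for $\phi_\tau^{*}\nu_0+\Delta(\phi_\tau^{*}f)$ at each $\xi\in\sH^1$ is then a direct consequence of the Laplacian identity together with the $\nu_0$-subharmonicity of $f$.

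The main obstacle is bookkeeping the factor $\lambda_\tau$ that enters every directional derivative under $\tau^{*}$; once this is correctly encoded in the local scaling of $\rho$ guaranteed by Lemma \ref{lem:fund}, both the Laplacian formula and the preservation of the subharmonicity condition reduce cleanly to their classical counterparts for ordinary rational maps.
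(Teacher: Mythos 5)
Your proof takes essentially the same route as the paper: both reduce to the twist $\tau$ (citing Jonsson's argument for the rational factor $\phi$), and both obtain $\Delta(\tau^* g)=\lambda_\tau\,\tau^*\Delta g$ by observing that the $\lambda_\tau$-scaling of the hyperbolic metric (Lemma \ref{lem:fund}) forces $D_{\vec v}(\tau^*g)=\lambda_\tau D_{T_\zeta\tau(\vec v)}g$; the paper runs this computation on a finite subtree, you phrase it via arc-length parametrization, but the content is identical. One small caveat: your last sentence asserts that $\phi_\tau^*\nu_0+\Delta(\phi_\tau^* f)\ge 0$ is a ``direct consequence'' of $\nu_0$-subharmonicity, but with the factor $\lambda_\tau$ in the Laplacian identity one only gets $\phi_\tau^*\nu_0+\Delta(\phi_\tau^* f)\ge(1-\lambda_\tau)\,\phi_\tau^*\nu_0$, which is automatic only when $\lambda_\tau\le 1$; for $\lambda_\tau>1$ the natural reference measure is $\lambda_\tau\,\phi_\tau^*\nu_0$. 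This imprecision is shared with the lemma statement itself, and the paper's own proof avoids the issue by deferring to Jonsson's argument rather than re-proving the membership; in any case it is the Laplacian identity, which you derive correctly, that is used downstream.
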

\begin{proof}
Applying the argument in \cite[Proposition 4.15]{Jon15}, to obtain the conclusion, we only need to prove that $\Delta(\tau^* f)=\lambda_\tau\tau^*(\Delta(f))$ on any finite subtree. For any finite subtree $\Gamma\subset\sH^1$, if $\vec{v}\in T_\zeta\Gamma$ is a direction at $\zeta\in\Gamma$, letting $\vec{w}$ be the image of $\vec{v}$ under $T_\zeta\tau$, we have $D_{\vec{v}}(\tau^* f))=\lambda_\tau D_{\vec{w}}f$ by Lemma \ref{lem:fund} (1) and hence 
\begin{align*}
\Delta(\tau^* f){\{\zeta\}}=\sum_{\vec{v}\in T_\zeta\Gamma}D_{\vec{v}}(\tau^* f)=\sum_{\vec{v}\in T_\zeta\Gamma,\vec{w}=T_\zeta\tau(\vec{v})}\lambda_\tau D_{\vec{w}}f=\lambda_\tau\tau^*(\Delta f)\{\zeta\}.
\end{align*}
\end{proof}

Now pick $\zeta\in \sH^1$. Since $d^{-1}\phi_\tau ^*\delta_\zeta$ is a probability measure, we have 
\[
d^{-1}\phi_\tau^*\delta_\zeta=\delta_\zeta+\Delta u
\]
for some continuous $\delta_{\zeta}$-subharmonic function $u$, see \cite[Section 2.5.2]{Jon15}. Iterating the above equation by Lemma \ref{lem:laplacianpullback}, we obtain
\[d^{-n}(\phi_\tau^n)^*\delta_{\zeta}=\delta_{\zeta}+\Delta u_n \quad \text{where}\quad 
u_n=\sum_{j=0}^{n-1}(d\lambda_\tau)^{-j} u\circ\phi_\tau^j.
\]
If $d\lambda_\tau >1$ then the sequence $\{u_n\}$ converges uniformly to a continuous $\delta_\zeta$-subharmonic function $u_\infty$. In this case we set 
\[\mu_{\phi_\tau}:=\delta_\zeta+\Delta u_\infty.\]
We call $\mu_{\phi_\tau}$ the \emph{canonical measure} of $\phi_\tau$. Since $u_\infty$ is bounded, the measure $\mu_{\phi_\tau}$ does not charge any classical point, see \cite[Section 2.5.2]{Jon15}.

\subsection{Proof of equidistribution}
We assume that $\lambda_\tau d>1$.

\begin{lem}\label{lem:distanceestimate}
Let $\phi_\tau$ be a twisted rational map of degree $d\ge 1$.
 Then for any $\zeta_0,\zeta\in \sH^1$, as $n\to\infty$,
$$\rho(\phi_\tau^n(\zeta),\zeta_0)=O((d\lambda_\tau)^n).$$
\end{lem}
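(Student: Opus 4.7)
The plan is to establish a global Lipschitz bound for $\phi_\tau$ on $(\sH^1,\rho)$, iterate it, and then assemble the estimate via triangle inequality and a telescoping sum.

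\textbf{Step 1 (global Lipschitz bound).} First I would prove that for all $\xi_1,\xi_2\in\sH^1$,
\[
\rho(\phi_\tau(\xi_1),\phi_\tau(\xi_2))\le d\lambda_\tau\cdot\rho(\xi_1,\xi_2).
\]
Writing $\phi_\tau=\phi\circ\tau$, Lemma \ref{lem:fund}(1) already gives $\rho(\tau(\xi_1),\tau(\xi_2))=\lambda_\tau\rho(\xi_1,\xi_2)$, so it suffices to show $\rho(\phi(\eta_1),\phi(\eta_2))\le d\rho(\eta_1,\eta_2)$ for the untwisted rational map $\phi$. For this, I would subdivide the segment $[\eta_1,\eta_2]$ by its (finitely many) intersections with the ramification locus and by endpoints of maximal subsegments on which $\phi$ is injective with constant local degree; on each such subsegment Proposition \ref{prop:multi-basic}(1) applies, giving expansion factor $m_{\vec v}(\phi)\le d$. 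Summing the lengths of the image pieces bounds the intrinsic length of the image path, and hence $\rho(\phi(\eta_1),\phi(\eta_2))$, by $d\rho(\eta_1,\eta_2)$.

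\textbf{Step 2 (iterate).} A straightforward induction on $n$ then yields
\[
\rho(\phi_\tau^n(\xi_1),\phi_\tau^n(\xi_2))\le (d\lambda_\tau)^n\,\rho(\xi_1,\xi_2)\quad\text{for all }\xi_1,\xi_2\in\sH^1.
\]

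\textbf{Step 3 (triangle inequality + telescoping).} By the triangle inequality,
\[
\rho(\phi_\tau^n(\zeta),\zeta_0)\le \rho(\phi_\tau^n(\zeta),\phi_\tau^n(\zeta_0))+\rho(\phi_\tau^n(\zeta_0),\zeta_0).
\]
Step 2 controls the first term by $(d\lambda_\tau)^n\rho(\zeta,\zeta_0)$. For the second, telescoping and Step 2 give
\[
\rho(\phi_\tau^n(\zeta_0),\zeta_0)\le \sum_{j=0}^{n-1}\rho(\phi_\tau^{j+1}(\zeta_0),\phi_\tau^j(\zeta_0))\le \rho(\phi_\tau(\zeta_0),\zeta_0)\sum_{j=0}^{n-1}(d\lambda_\tau)^j.
\]
As the lemma is stated in the setting of Section \ref{sec:equ} where $d\lambda_\tau>1$, the geometric sum is $O((d\lambda_\tau)^n)$, which combined with the first bound gives $\rho(\phi_\tau^n(\zeta),\zeta_0)=O((d\lambda_\tau)^n)$.

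\textbf{Main obstacle.} The one subtle point is Step 1: the image of $[\xi_1,\xi_2]$ under $\phi_\tau$ need not be a single segment in $\sH^1$, so one has to argue that the hyperbolic distance between the endpoints of the image is bounded by the total length of the image path and then control this length piece-by-piece using Proposition \ref{prop:multi-basic}(1) together with the uniform upper bound $m_{\vec v}(\phi_\tau)\le d$ from Proposition \ref{prop:deg-basic}(1). Once this Lipschitz-type estimate is in hand, the remaining steps are purely bookkeeping.
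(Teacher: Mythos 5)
Your proposal follows essentially the same route as the paper: a global Lipschitz bound $\rho(\phi_\tau(\xi_1),\phi_\tau(\xi_2))\le d\lambda_\tau\,\rho(\xi_1,\xi_2)$ (which the paper simply cites as ``$\phi$ expands the hyperbolic metric by a factor at most $d$ and $\tau$ by $\lambda_\tau$''), iterated to give $(d\lambda_\tau)^n$, then the triangle inequality together with a telescoping sum controlled by a geometric series. The paper's proof is the same argument, just stated more compactly; your Step~1 spells out the standard expansion bound that the paper takes for granted.
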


\begin{proof}
The rational map $\phi$ expands the hyperbolic metric by a factor at most $d$ and $\tau$ expands the hyperbolic metric by $\lambda_\tau$. We have
\begin{align*}
\rho(\phi_\tau^n(\zeta),\zeta_0) & \leq \rho(\phi_\tau^n(\zeta),\phi_\tau^n(\zeta_0))+\rho(\phi_\tau^n(\zeta_0),\zeta_0) \leq (d\lambda_\tau)^n\rho(\zeta,\zeta_0)+ \sum_{j=0}^{n-1}\rho(\phi_\tau^{j+1}(\zeta_0),\phi_\tau^j(\zeta_0))\\
& \leq (d\lambda_\tau)^n\rho(\zeta,\zeta_0)+\sum_{j=0}^{n-1}(d\lambda_\tau)^j \rho(\phi_\tau(\zeta_0),\zeta_0)=O((d\lambda_\tau)^n).
\end{align*}
\end{proof}

\begin{proof}[Proof of Theorem \ref{thm:equ}]
The proof of Theorem \ref{thm:equ} now goes in the same way as in \cite[Section 5.9.3]{Jon15}. The only difference is that for twisted rational maps we have an additional multiplicative constant $\lambda_\tau$ in Lemma \ref{lem:laplacianpullback} and Lemma \ref{lem:distanceestimate}. It suffices to replace the corresponding formulas in \cite[Section 5.9.3]{Jon15} with these two lemmas.
\end{proof}

The following is an immediate consequence of Theorem \ref{thm:equ} and the definition of Berkovich Julia set. The second assertion can be obtained from the same argument as in \cite[Proposition 5.14]{Jon15}. 
\begin{cor}
Let $\phi_\tau$ be as in Theorem \ref{thm:equ}. Then the following holds:
\begin{enumerate}
\item For any point $\xi\in\sP^1\setminus E_{\phi_\tau}$,
\[\frac{1}{d^n}\sum_{\phi_\tau^n(\xi)=\zeta}(\deg_\xi{\phi_\tau^n})\delta_\xi\rightarrow \mu_{\phi_\tau}, \quad \text{as}\ n\rightarrow \infty.\]
\item The support of $\mu_{\phi_\tau}$ is the Berkovich Julia set $\J(\phi_\tau)$.
\end{enumerate}
\end{cor}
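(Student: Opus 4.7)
The plan is to derive both statements as consequences of Theorem \ref{thm:equ}. For part (1), I would apply the theorem to the Radon probability measure $\nu:=\delta_\zeta$, where $\zeta\in\sP^1\setminus E_{\phi_\tau}$ is the non-exceptional point. Since $E_{\phi_\tau}$ is finite and $\zeta\notin E_{\phi_\tau}$, we have $\delta_\zeta(E(\phi_\tau))=0$, so the hypothesis of Theorem \ref{thm:equ} is satisfied. Iterating the pullback formula \eqref{eq:measure}, one obtains
\[
(\phi_\tau^n)^\ast\delta_\zeta=\sum_{\phi_\tau^n(\xi)=\zeta}(\deg_\xi\phi_\tau^n)\,\delta_\xi,
\]
so dividing by $d^n$ and applying Theorem \ref{thm:equ} gives the asserted weak convergence.

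For part (2), I would show the two inclusions separately. To prove $\supp\mu_{\phi_\tau}\subseteq\J(\phi_\tau)$, I pick any $x\in\F(\phi_\tau)$ and choose a neighborhood $U$ of $x$ such that $X:=\bigcup_{n\ge 0}\phi_\tau^n(U)$ omits infinitely many points. Since $E_{\phi_\tau}$ is finite (Lemma \ref{lem:exception}), there exists $\zeta\in\sP^1\setminus(X\cup E_{\phi_\tau})$. Then $\phi_\tau^{-n}(\zeta)\cap U=\emptyset$ for every $n\ge 0$, so $d^{-n}(\phi_\tau^n)^\ast\delta_\zeta(U)=0$. By part (1) and lower semicontinuity of measure on open sets under weak convergence,
\[
\mu_{\phi_\tau}(U)\le \liminf_{n\to\infty}\frac{1}{d^n}(\phi_\tau^n)^\ast\delta_\zeta(U)=0,
\]
so $x\notin\supp\mu_{\phi_\tau}$.

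To prove the reverse inclusion $\J(\phi_\tau)\subseteq\supp\mu_{\phi_\tau}$, I would use that $\phi_\tau^\ast\mu_{\phi_\tau}=d\,\mu_{\phi_\tau}$ implies that $S:=\supp\mu_{\phi_\tau}$ satisfies $\phi_\tau^{-1}(S)=S$, and combined with surjectivity of $\phi_\tau$ also $\phi_\tau(S)=S$. Consequently the complement $F:=\sP^1\setminus S$ is open and totally invariant, so $\bigcup_{n\ge 0}\phi_\tau^n(V)\subseteq F$ for every open $V\subseteq F$. Since $\mu_{\phi_\tau}$ does not charge classical points and is a probability measure, $S$ must be infinite (otherwise $S$ would be finite and totally invariant, forcing $S\subseteq E_{\phi_\tau}\subseteq\bP^1$, contradicting $\mu_{\phi_\tau}(S)=1$). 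Hence $\bigcup_{n\ge 0}\phi_\tau^n(V)$ omits infinitely many points (all of $S$), which by definition places $V\subseteq\F(\phi_\tau)$, and therefore $F\subseteq\F(\phi_\tau)$, i.e.\ $\J(\phi_\tau)\subseteq S$.

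The main obstacle, and the step requiring most care, is showing that $S$ is infinite in the second inclusion. The naive concern is that $\mu_{\phi_\tau}$ might be a finite sum of atoms at type II/III/IV points; ruling this out uses that such atoms would give a finite totally invariant set of non-classical points, on which the relation $\phi_\tau^\ast\mu_{\phi_\tau}=d\,\mu_{\phi_\tau}$ together with Proposition \ref{prop:deg-basic} forces each atom to be totally ramified under $\phi_\tau$. A Riemann--Hurwitz-style count (transferred to twisted rational maps via Lemma \ref{lem:ram}) then bounds such atoms, and combined with the classification of totally ramified behavior one obtains a contradiction with $\mu_{\phi_\tau}$ not charging classical points.
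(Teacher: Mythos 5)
Your part~(1) is correct and is exactly what the paper intends by ``immediate consequence of Theorem~\ref{thm:equ}'': apply the theorem with $\nu=\delta_\zeta$ for a non-exceptional point $\zeta$ and iterate the pull-back formula \eqref{eq:measure}. Your argument for the inclusion $\mathrm{supp}\,\mu_{\phi_\tau}\subseteq\J(\phi_\tau)$ via lower semicontinuity on open sets under weak convergence is also correct.

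The reverse inclusion has a genuine gap. You assert that $S:=\mathrm{supp}\,\mu_{\phi_\tau}$ must be infinite, on the grounds that otherwise it would be a finite totally invariant set and hence contained in $E_{\phi_\tau}\subseteq\bP^1$. That inference is false: $E_{\phi_\tau}$ is defined in Section~\ref{sec:equ} to consist only of \emph{classical} points with finite grand orbit, whereas a finite totally invariant subset of $\sP^1$ can perfectly well consist of non-classical points. The twisted monomials of Section~\ref{ex:monomial} with $\lambda_\tau>1/d$ give an explicit counterexample to your claim: there $\J(f_\tau)=\{\xi_G\}$ and $\mu_{f_\tau}=\delta_{\xi_G}$, so $S$ is a singleton type~II point, finite and disjoint from $\bP^1$. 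Your proposed repair in the final paragraph fails for the same reason: you would deduce that the atom of $\mu_{\phi_\tau}$ is a totally ramified type~II fixed point, which is entirely consistent with ``$\mu_{\phi_\tau}$ does not charge classical points'' (type~II points are not classical), so the contradiction you announce never appears. Once $S$ is finite, the observation that $\bigcup_{n\ge0}\phi_\tau^n(V)\subseteq\sP^1\setminus S$ for open $V\subseteq\sP^1\setminus S$ no longer exhibits infinitely many omitted points, so the Fatou-set criterion cannot be invoked and your argument leaves the simple case (singleton Julia set) entirely open. The corollary itself is still true there, since $\J$ and $S$ coincide, but that requires a separate argument. The paper sidesteps all of this by citing the proof of \cite[Proposition 5.14]{Jon15}, which does not hinge on $S$ being infinite.
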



\section{Fatou components}
The Fatou set $\F(\phi_\tau)$ of a twisted rational map $\phi_\tau$ of degree at least $2$ is an open subset of $\sP^1$ (see Proposition \ref{prop:fatoujuliabasics}). Each connected component of $\F(\phi_\tau)$ is a \emph{(Berkovich) Fatou component} of $\phi_\tau$. We say that a component $U\subset\F(\phi_\tau)$ is \emph{periodic} if there exists $m\ge 1$ such that $\phi_\tau^m(U)=U$, and that a component $U\subset\F(\phi_\tau)$ is \emph{wandering} if $U$ has infinite forward orbit.

\subsection{Classification of periodic Fatou components}\label{sec:periodic}

We study periodic Fatou components in this section. We mainly focus on the case where $\lambda_\tau\ge1$ and $\phi_\tau$ has no type III repelling fixed points. Recall from Proposition \ref{prop:periodic-III} that if $\lambda_\tau=1$, then $\phi_\tau$ has no type III repelling fixed points.

\begin{prop}\label{prop:fix-pt}
Let $\phi_\tau$ be a twisted rational map of degree at least $2$.  Suppose that $\lambda_\tau\ge1$ and $\phi_\tau$ has no type III repelling fixed points. If $U\subset\F(\phi_\tau)$ is a fixed Fatou component of $\phi_\tau$, then $U$ contains a type I attracting fixed point or a type II indifferent fixed point. If in addition $\lambda_\tau>1$, then only the former case occurs.
\end{prop}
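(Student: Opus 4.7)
The strategy is to adapt the Rivera-Letelier--Kiwi classification for rational maps (\cite{Rivera03II}, appendix of \cite{DeMarco16}) to the twisted setting. The essential ingredients that remain are the directional hyperbolic expansion formula of Proposition \ref{prop:multi-basic}(1), the local fixed-point dichotomy of Lemma \ref{lem:non-rigid-per}, the type-wise Fatou/Julia classification of Proposition \ref{prop:periodic-J}, and the hypothesis (with Proposition \ref{prop:periodic-III}) excluding type III repelling fixed points.

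First I would pick a type II point $\zeta_0\in U$ and set $\zeta_n:=\phi_\tau^n(\zeta_0)\in U$, using $\phi_\tau(U)=U$ from Proposition \ref{prop:fatoujuliabasics}. The proof then splits on the asymptotic behaviour of $\{\zeta_n\}$ in $\sP^1$. If some subsequence accumulates at $x\in\bP^1$, I would argue that $x\in U$ is a type I attracting fixed point: continuity of $\phi_\tau$ gives $\phi_\tau(x)=x$, the equicontinuity afforded by Lemma \ref{lem:holder} and Theorem \ref{thm:F-J} (using $x\in\F_I(\phi_\tau)$) upgrades subsequential convergence to full convergence and shows that $x$ attracts a neighbourhood, and finally $x\in U$ follows from $\partial U\subset\J(\phi_\tau)$ combined with $x\in\F(\phi_\tau)$ via Proposition \ref{prop:periodic-J}(1).

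Otherwise $\{\zeta_n\}$ accumulates only inside $\sH^1$. Here the plan is a tree-based fixed-point argument: the accumulation set $A:=\bigcap_{n\ge 0}\overline{\{\zeta_k:k\ge n\}}$ is a nonempty, compact, $\phi_\tau$-invariant subset of $\overline{U}\cap\sH^1$, and a topological fixed-point theorem for continuous self-maps of a compact real subtree containing $A$ produces $\zeta^*$ with $\phi_\tau(\zeta^*)=\zeta^*$. Openness and $\phi_\tau$-invariance of $U$ put $\zeta^*\in U\subset\F(\phi_\tau)$, and combining Lemma \ref{lem:non-rigid-per} with Proposition \ref{prop:periodic-J}(2)--(4) and the no-type-III-repelling hypothesis leaves only the type II indifferent possibility. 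For $\lambda_\tau>1$, Lemma \ref{lem:non-rigid-per}(3) makes every fixed point in $\sH^1$ repelling and therefore (by Proposition \ref{prop:periodic-J}(2b),(3b)) located in $\J(\phi_\tau)$, so the second case cannot occur and only the type I attracting alternative survives.

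The main obstacle will be building a compact $\phi_\tau$-invariant real subtree containing $A$ on which the tree fixed-point theorem applies. In the $\lambda_\tau=1$ rational setting, $\phi_\tau$ is non-expanding on $\sH^1$ and one uses a decreasing intersection of invariant subtrees; in the twisted case with $\lambda_\tau\ge 1$ and possibly large directional multiplicities, the map expands hyperbolic distances, so one needs to work inside a maximal subtree on which $\phi_\tau$ is non-expanding, whose existence ultimately comes from the assumption $U\subset\F(\phi_\tau)$. A secondary technical point is ensuring the resulting fixed point is of type II rather than type III or IV: type IV fixed points can be ruled out because they are cofinal limits of nested disks preserved rigidly under $\phi_\tau$ and so isolated from a generic type II orbit, while type III indifferent fixed points impose the algebraic constraint $\lambda_\tau m_{\vec v}(\phi_\tau)=1$ on both directions at $\zeta^*$, which combined with Lemma \ref{thm:imgtypeII} substantially restricts when such a point can arise.
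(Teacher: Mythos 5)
Your proposal takes a genuinely different route from the paper, and several of the steps do not close.

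The paper's argument is simpler at the start: it applies the tree fixed-point theorem (Wallace) directly to $\overline{U}$. Since $\phi_\tau(U)=U$ (Proposition~\ref{prop:fatoujuliabasics}), continuity gives $\phi_\tau(\overline{U})\subseteq\overline{U}$, and $\overline{U}$ is a compact connected subtree of $\sP^1$; Wallace's theorem is purely topological and needs no non-expansion hypothesis, so a fixed point $\zeta_0\in\overline{U}$ exists at once. Your ``main obstacle'' --- constructing a compact $\phi_\tau$-invariant subtree containing the accumulation set $A$, and worrying about hyperbolic expansion when $\lambda_\tau\ge 1$ --- is therefore a non-problem: you should just take $\overline{U}$ itself, and the expansion issue you flag is irrelevant to a topological fixed-point theorem.

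Your first branch (subsequential accumulation at $x\in\bP^1$) has a genuine gap. Continuity gives $\phi_\tau(\zeta_{n_k})\to\phi_\tau(x)$, but $\phi_\tau(\zeta_{n_k})=\zeta_{n_k+1}$ is a different subsequence and need not converge to $x$; subsequential accumulation does not by itself yield $\phi_\tau(x)=x$. Your proposed fix --- use equicontinuity at $x$ via Theorem~\ref{thm:F-J} to upgrade to full convergence --- is circular: to invoke $x\in\F_I(\phi_\tau)$ you must first know $x\in\F(\phi_\tau)$, i.e.\ $x\in U$ rather than $\partial U$, which is precisely what you are trying to establish. The paper sidesteps this entirely by starting from an honest fixed point $\zeta_0\in\overline{U}$ produced by Wallace and then analyzing its type.

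What the paper does that you have not anticipated is the hardest part of the argument: once a fixed point lands in $\partial U$ of type I repelling or of type II repelling, one must excise a small disk, reapply Wallace to the remainder, and iterate. Because a twisted rational map (unlike a rational map) can have infinitely many fixed points in $\bP^1$, this iteration need not terminate after finitely many steps; the paper uses the sequential compactness of $\sP^1$ (Poineau's Corollaire~5.9) to extract a limit fixed point and then argues, via the dichotomies of Lemma~\ref{lem:non-rigid-per} and Proposition~\ref{prop:periodic-J} together with the no-type-III-repelling hypothesis, that the limit must be type II indifferent or type I attracting. Your exclusion of type IV points (``isolated from a generic type II orbit'') and the discussion of type III are hand-waving where the paper relies on Proposition~\ref{prop:periodic-J}(3)(4), which directly classify these fixed points as indifferent and in $\F(\phi_\tau)$ under the hypotheses at hand.

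Your last paragraph on $\lambda_\tau>1$ is correct and matches the paper: once the dichotomy is established, Lemma~\ref{lem:non-rigid-per}(3) and Proposition~\ref{prop:periodic-J}(2) rule out type II fixed points inside $U$.
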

The proof of Proposition \ref{prop:fix-pt} follows the strategy of \cite[Lemma A.7]{DeMarco16}. A difference is that a twisted rational map may have infinitely many fixed points even in $\bP^1$. To remedy this, we use the sequential compactness of Berkovich space, see \cite[Corollaire 5.9]{Poineau13} and \cite[Corollary A]{Favre15}. 
\begin{proof}
Let $U$ be a fixed Fatou component. Assume that $U$ contains no type I attracting fixed point. We need to show that $U$ contains a type II fixed point. A type II fixed point, if exists, is necessarily indifferent by Propositions \ref{lem:non-rigid-per} (2)(3) and \ref{prop:periodic-J} (2). Note that $\phi_\tau$ has at least one fixed point $\zeta_0\in\overline{U}$ since any continuous map on a compact tree has a fixed point (see \cite{Wallace41}). If $\zeta_0$ is of type III or IV, then it is indifferent by Proposition \ref{prop:periodic-J} (3)(4) and by the assumption that $\phi_\tau$ has no type III repelling fixed points; in this case there is a type II fixed point in $U$ near $\zeta_0$, and we are done. If $\zeta_0\in\overline{U}\cap\bP^1$ is indifferent, then considering a small Berkovich disk containing $\zeta_0$ with type II boundary, we conclude that this boundary point is fixed. If $\zeta_0\in\overline{U}\cap\bP^1$ is attracting, then $\zeta_0\in U$ by Proposition \ref{prop:periodic-J} (1), which contradicts the assumption that $U$ contains no type I attracting fixed point. 

Now we work on the remaining cases where $\zeta_0\in\partial\overline{U}$ is of type II or $\zeta_0\in\overline{U}\cap\bP^1$ is repelling. 
We claim that the closure $\overline{U}$ contains a type II fixed point of $\phi_\tau$.
 If $\zeta_0\in\partial\overline{U}$ is of type II, the claim immediately holds. We focus on the case where $\zeta_0\in\overline{U}\cap\bP^1$ is repelling. Pick a small Berkovich open disk $\aD$ containing $\zeta_0$ with boundary point $\xi_0$ such that $\aD\subsetneq\phi_\tau(\aD)$. We can define a continuous map $F:\overline{U}\setminus \aD\to\overline{U}\setminus \aD$, sending $\zeta$ to $\phi_\tau(\zeta)$ if $\phi_\tau(\zeta)\notin\phi_\tau(\aD)$, and to $\phi_\tau(\xi_0)$ otherwise. It follows from \cite{Wallace41} that $F$ has a fixed point not in $\aD$, and that so does $\phi_\tau$. Repeating this process and applying the previous arguments, we conclude that $\overline{U}$ contains either a type II fixed point or a sequence of distinct repelling fixed points in $\overline{U}\cap\bP^1$. In the latter case, by \cite[Corollaire 5.9]{Poineau13}, passing to a subsequence if necessary, we obtain a limit fixed point $\zeta_\infty$ of $\phi_\tau$ in $\overline{U}$. If $\zeta_\infty$ is not of type I, then we conclude by previous arguments. Therefore we assume that $\zeta_\infty$ is of type I. If $\zeta_\infty$ is in the boundary of $U$ then it is in the Julia set $\J(\phi_\tau)$. By Proposition \ref{prop:periodic-J} (1) and Theorem \ref{thm:F-J} $\zeta_\infty$, a type I fixed point in $\J(\phi_\tau)$ is repelling, which contradicts the fact that $\zeta_\infty$ is a limit of other repelling fixed points. Therefore $\zeta_\infty$ is necessarily contained in $U$ and hence is indifferent by Proposition \ref{prop:periodic-J} (1) and by the assumption that $U$ contains no type I attracting fixed point. Then we can find a type II fixed point in $U$ as in the previous arguments.  

We can now assume that $\zeta_0\in\overline{U}$ is a type II fixed point. It is either indifferent or repelling by Proposition \ref{lem:non-rigid-per} (2) and (3). If $\zeta_0\in U$, we immediately obtain the desired conclusion. Now let us consider the case where $\zeta_0\in\partial U\cap\sH^1$. Since $U$ is fixed, the direction $\vec{v}$ at $\zeta_0$ containing $U$ is fixed by $T_{\zeta_0}\phi_\tau$. If $\zeta_0$ is indifferent, then any type II point in the direction $\vec{v}$ sufficiently close to $\zeta_0$ is a fixed point and we are done. Assume now that $\zeta_0$ is repelling. We argue according to the number of repelling fixed points in the boundary $\partial U$. If $\partial U$ contains finitely many repelling fixed points, then removing finitely many Berkovich open disks containing these fixed points and applying a similar argument as in the previous paragraph, we conclude that $U$ contains a type II fixed point.  If $\partial U$ contains infinitely many repelling fixed points, again by  \cite[Corollaire 5.9]{Poineau13}, passing to a subsequence if necessary, we obtain a limit fixed point $\xi_\infty\in\partial U\subset\J(\phi_\tau)$. By Proposition \ref{prop:periodic-J} and by the assumption that $\phi_\tau$ has no type III repelling fixed points, the fixed point $\xi_\infty$ is of type I or II. Since all other boundary points of $U$ are contained in a same direction at $\xi_\infty$, we assert that the direction $\vec{w}$ at $\xi_\infty$ containing $U$ is a not a repelling direction because otherwise there would not exist sequences of distinct fixed points converging to $\xi_\infty$. Since $\lambda_\tau\ge 1$, the nonrepelling direction $\vec{w}$ is necessarily indifferent and $\lambda_\tau$ is in fact $1$. It follows that there exists a type II fixed point sufficiently close to $\xi_\infty$ in the direction $\vec{w}$. This completes the proof of the first assertion in the proposition. 

If $\lambda_\tau>1$, then any type II fixed point in $U$ is repelling. It follows that $U$ can not contain type II fixed point by Proposition \ref{prop:periodic-J} (2).  Thus the second assertion holds.
\end{proof}

Now we focus on tame twisted rational map $\phi_\tau$ with $\lambda_\tau=1$ and show that any fixed component in $\F(\phi_\tau)$  containing a type II fixed point has a periodic skeleton.

\begin{prop}\label{prop:periodic-sk}
Let $\phi_\tau$ be a tame twisted rational map of degree at least $2$. Assume that $\lambda_\tau=1$ and that $U\subset\F(\phi_\tau)$ is a fixed Fatou component containing a type II fixed point $\zeta_0\in U$. Then for any $\xi_0\in\partial U$, each point of the segment $[\xi_0, \zeta_0]$ is periodic, i.e.\ there exists $m\ge 1$ such that $\phi_\tau^m(\zeta)=\zeta$ for all $\zeta\in[\xi_0,\zeta_0]$.
\end{prop}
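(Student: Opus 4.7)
The plan is to first establish that the direction $\vec v_0\in T_{\zeta_0}\sP^1$ pointing from $\zeta_0$ toward $\xi_0$ is periodic under the tangent map $T_{\zeta_0}\phi_\tau$, then to convert that into pointwise identity of some iterate of $\phi_\tau$ on a short initial subsegment of $[\xi_0,\zeta_0]$ abutting $\zeta_0$, and finally to propagate this identity along the whole segment. Since $\zeta_0$ is a type II indifferent fixed point and $\lambda_\tau=1$, Lemma \ref{lem:non-rigid-per}(2) combined with the definition of indifference force $\deg_{\zeta_0}\phi_\tau=1$ and $m_{\vec v}(\phi_\tau)=1$ for every $\vec v$, so $T_{\zeta_0}\phi_\tau$ is a bijective M\"obius self-map of $T_{\zeta_0}\sP^1$.

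For the periodicity of $\vec v_0$, I would exploit that $\zeta_0\in U$ admits an open affinoid neighborhood inside $U$ whose complement in $\sP^1$ is a disjoint finite union of closed Berkovich disks, each contained in a single direction at $\zeta_0$; hence $\partial U$ occupies only finitely many directions at $\zeta_0$. Since $\phi_\tau(U)=U$ combined with $\partial U\subset\J(\phi_\tau)$ gives $\phi_\tau(\partial U)=\partial U$, every $\phi_\tau^n(\xi_0)$ lies in $\partial U$ and thus in one of these finitely many directions. If some iterate $(T_{\zeta_0}\phi_\tau)^n(\vec v_0)$ is bad, Proposition \ref{prop:periodic-J}(2a) bounds the forward orbit of $\vec v_0$, which is then periodic by M\"obius bijectivity; if all iterates are good, Proposition \ref{prop:image-whole} identifies $(T_{\zeta_0}\phi_\tau)^n(\vec v_0)$ with the direction of $\phi_\tau^n(\xi_0)$ at $\zeta_0$, trapping the M\"obius orbit in a finite set. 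Either way $\vec v_0$ is periodic; let $m$ denote its period. Proposition \ref{prop:multi-basic}(1) applied to $\phi_\tau^m$ at $\zeta_0$ in direction $\vec v_0$ then yields $\zeta_1\in\,]\xi_0,\zeta_0[$ on which $\phi_\tau^m\colon[\zeta_1,\zeta_0]\to[\phi_\tau^m(\zeta_1),\zeta_0]$ is a $\rho$-isometry (scaling factor $\lambda_\tau^m m_{\vec v_0}(\phi_\tau^m)=1$). Since $(T_{\zeta_0}\phi_\tau)^m(\vec v_0)=\vec v_0$ the image is a subsegment of $[\xi_0,\zeta_0]$, and the unique isometry of a segment fixing one endpoint being the identity gives $\phi_\tau^m|_{[\zeta_1,\zeta_0]}=\mathrm{id}$.

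To reach $\xi_0$, I would let $\zeta^*$ be the extreme point toward $\xi_0$ of the closed set $\{\zeta\in[\xi_0,\zeta_0]\colon \phi_\tau^M|_{[\zeta,\zeta_0]}=\mathrm{id}\text{ for some }M\ge 1\}$. If $\zeta^*=\xi_0$ then a common multiple of the witnessing periods serves as $m$; otherwise $\zeta^*$ is a fixed point of the witnessing iterate, and I would rerun the preceding analysis at $\zeta^*$, using that by tameness $\aR(\phi_\tau^M)\subset\Hull(\Crit(\phi_\tau^M))$ is a finite tree so $[\xi_0,\zeta_0]$ meets it in only finitely many points; between such crossings Proposition \ref{prop:fixedinterval} makes $\phi_\tau^M$ a linear homeomorphism that, being identity on a subpiece, must be identity on the whole piece. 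The principal obstacle is the local analysis at $\zeta^*$: when $\zeta^*\in U$ one reruns the Step 1 argument verbatim with $\zeta^*$ in place of $\zeta_0$, but when $\zeta^*\in\partial U$ or when $\zeta^*$ is a ramification point, the directions-meeting-$\partial U$ counting argument must be replaced by a direct check that $T_{\zeta^*}\phi_\tau^M$ fixes the $\xi_0$-direction with directional multiplicity $1$. Tameness is used throughout both to make these local obstructions finite along the segment and to invoke Proposition \ref{prop:fixedinterval} away from them, ensuring that the extension procedure terminates after finitely many steps with a single global period.
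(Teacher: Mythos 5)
Your Step 1 — establishing that the direction $\vec v_0$ at $\zeta_0$ pointing toward $\xi_0$ is periodic under the (M\"obius) tangent map — is a sensible opening and the justification is sound. The fatal gap is in Step 2. Proposition \ref{prop:multi-basic}(1) gives you an \emph{isometry} $\phi_\tau^m\colon[\zeta_1,\zeta_0]\to[\phi_\tau^m(\zeta_1),\zeta_0]$ with scaling factor $\lambda_\tau^m m_{\vec v_0}(\phi_\tau^m)=1$, but there is no justification for your assertion that the target segment is a subsegment of $[\xi_0,\zeta_0]$. Knowing $(T_{\zeta_0}\phi_\tau)^m(\vec v_0)=\vec v_0$ only tells you that $\phi_\tau^m(\zeta_1)$ lies in the Berkovich disk $\sB(\vec v_0)$; the segments $[\zeta_1,\zeta_0]$ and $[\phi_\tau^m(\zeta_1),\zeta_0]$ can and generically do branch below $\zeta_0$, and the isometry is then the identity only on the common piece $[\zeta_1\vee\phi_\tau^m(\zeta_1),\zeta_0]$, not on $[\zeta_1,\zeta_0]$. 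Concretely: if $\phi_\tau^m$ restricted to $\sB(\vec v_0)$ is modeled by a power series twist $z\mapsto a_0+a_1\tau(z)+\cdots$ with $|a_1|=1$, then the Gauss-type points $\xi_{0,r}$ are fixed only for $r\ge|a_0|$, and whether that fixed piece meets $[\xi_0,\zeta_0]$ depends on where $[\xi_0,\zeta_0]$ branches off $[0,\zeta_0]$ — information your argument never controls. ``The unique isometry of a segment fixing one endpoint is the identity'' requires the isometry to be a self-map of the segment, which is exactly what is at issue.

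This is precisely the obstacle that the paper's two auxiliary lemmas are designed to overcome: Lemma \ref{lem:tangent1} computes the local action $\psi_\tau(x_0+y(z+\vec v))=\psi_\tau(x_0)+\psi'(0)\tau(yz)+\tau(y)\vec v$, making the translation term visible, and Lemma \ref{lem:replace} (Rivera's approximation) lets one replace $\phi_\tau$ by such a model on the relevant annulus. Plugged into Kiwi's Lemma A.7 from \cite{DeMarco16}, one then tracks not merely the tangent direction but the actual displacement of boundary points, and it is this finer information that yields the pointwise periodicity of the segment. Your Step 3 propagation inherits the same difficulty: at each $\zeta^*$ you would again need to control the translation term, not just the tangent map, so ``rerunning the analysis'' does not close the loop. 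Without a quantitative lemma of the type \ref{lem:tangent1}/\ref{lem:replace}, the argument does not go through.
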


The proof of Proposition \ref{prop:periodic-sk} is essentially the same as \cite[Lemma A.7]{DeMarco16}. It suffices to replace, in the proof of \cite[Lemma A.7]{DeMarco16}, the parts corresponding to the next two lemmas with them. We omit here the detailed proof. 

\begin{lem}\label{lem:tangent1}
 Let $\psi\in K(z)$ be represented by a convergent power series and let $\psi_\tau$ be a twisted rational map of degree $\geq 1$. Suppose that $\psi_\tau:\aD(0,1)\to \aD(0,1)$ is bijective. Then for any classical point $x_0$ and $y\not=0$ in $\aD(0,1)\cap K$, for the direction $\vec{v}\in T_{\xi_G}\sP^1$ containing $0$ and for any $z\in\aD(0,1)\cap K$, we have 
$$\psi_\tau(x_0+y(z+\vec{v}))=\psi_\tau(x_0)+\psi'(0)\tau(yz)+\tau(y)\vec{v},$$
where $a+b\vec{v}$ means $a+b(\sB(\vec{v})\cap\bP^1)$ for any $a,b\in K$.
\end{lem}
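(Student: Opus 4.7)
The plan is to translate the set-valued identity into a computation about the image of an open sub-disk under $\psi$. Since $\tau$ is a field automorphism,
\[\tau(x_0+y(z+w))=\tau(x_0)+\tau(y)\tau(z)+\tau(y)\tau(w),\]
and as $w$ varies over $\sB(\vec{v})\cap\bP^1=\{w\in K:|w|<1\}$, so does $\tau(w)$ by Lemma \ref{lem:fund1}. Setting $u:=\tau(x_0)+\tau(y)\tau(z)\in\aD(0,1)\cap K$ and $r:=|\tau(y)|\in(0,1)$, the left-hand side of the asserted identity equals $\psi(\aD(u,r))$, and the task becomes to show
\[\psi(\aD(u,r))=\aD\bigl(\psi_\tau(x_0)+\psi'(0)\tau(yz),\,r\bigr).\]

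I next exploit the power series $\psi(\zeta)=\sum_{k\geq 0}a_k\zeta^k$. Because $\tau(\aD(0,1))=\aD(0,1)$ (Lemma \ref{lem:fund1}) and $\psi_\tau$ bijects $\aD(0,1)$ onto itself, so does $\psi$; hence $\sup_k|a_k|=1$, and a local-degree-$\geq 2$ behaviour at $0$ would destroy injectivity on a small subdisk, forcing $|a_1|=|\psi'(0)|=1$ together with $|a_k|\leq 1$ for all $k$. Using $|u|<1$, $|\binom{k}{j}|\leq 1$, and these bounds, I obtain $|\psi^{(j)}(u)/j!|\leq 1$ for every $j\geq 1$ as well as $|\psi'(u)-\psi'(0)|<1=|\psi'(0)|$. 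A first Taylor expansion around $u$ then yields, for $|v|<r$,
\[\psi(u+v)=\psi(u)+\psi'(0)\,v+E(v),\qquad|E(v)|<|v|,\]
so $|\psi(u+v)-\psi(u)|=|v|$ by the ultrametric strict-dominance principle, giving the inclusion $\psi(\aD(u,r))\subseteq\aD(\psi(u),r)$. A second expansion, of $\psi(u)=\psi(\tau(x_0)+\tau(y)\tau(z))$ around $\tau(x_0)$, gives similarly
\[\psi(u)=\psi_\tau(x_0)+\psi'(0)\tau(yz)+E',\qquad|E'|<|\tau(y)\tau(z)|<r,\]
so the disk $\aD(\psi(u),r)$ coincides with $\aD(\psi_\tau(x_0)+\psi'(0)\tau(yz),r)$ because shifting the centre by an element of the disk leaves the disk unchanged.

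To finish, I must establish the reverse inclusion $\psi(\aD(u,r))\supseteq\aD(\psi(u),r)$. This is where the main obstacle lies: I will derive it from the sharpened non-archimedean image formula that the image of $\aD(u,r)$ under the power series $\psi$ is exactly $\aD\bigl(\psi(u),\sup_{k\geq 1}|\psi^{(k)}(u)/k!|\,r^k\bigr)$, which by the estimates above evaluates to $\aD(\psi(u),r)$. Alternatively, applying the same Taylor argument to the analytic inverse of $\psi$ on $\aD(0,1)$ (whose leading coefficient also has absolute value $1$) gives $\psi^{-1}(\aD(\psi(u),r))\subseteq\aD(u,r)$ and hence the reverse inclusion. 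Once both inclusions and the identification of the centre are combined, the formula $\psi_\tau(x_0+y(z+\vec{v}))=\psi_\tau(x_0)+\psi'(0)\tau(yz)+\tau(y)\vec{v}$ follows; the remaining work is routine ultrametric bookkeeping, with the only genuine subtlety being the equality (not just inclusion) of diameters, which rests on the bijectivity hypothesis via $|\psi'(0)|=1$.
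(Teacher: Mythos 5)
Your proof is correct and follows essentially the same route as the paper: expand $\psi$ as a power series, use bijectivity of $\psi_\tau$ on $\aD(0,1)$ to obtain $|a_1|=1$ and $|a_k|\leq 1$, and then apply ultrametric Taylor estimates to compute the image of the translated sub-disk. The paper compresses this into a single line of set-algebraic bookkeeping (absorbing the higher-order terms into the disk $\tau(y)\vec{v}$), while you spell out both inclusions and the re-centering explicitly; the underlying argument is the same.
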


\begin{proof}
Write $\psi(z)=\sum_{n\ge 0}a_nz^n$. Then  $\psi_\tau(z)=\sum_{n\ge 0}a_n(\tau(z))^n$. Since both $\psi_\tau$ and $\tau$ are bijective from $\aD(0,1)$ to itself, so is $\psi$. It follows that $|a_0|<1$, $|a_1|=1$ and $|a_j|\le1$ for all $j\ge 2$. 
We compute  
\begin{align*}
\psi_\tau(x_0+y(z+\vec{v}))&-\psi_\tau(x_0)=a_1\tau(y(z+\vec{v}))+\sum_{n\ge 2}a_n\left((\tau(x_0+y(z+\vec{v})))^n-(\tau(x_0))^n\right)\\
&=a_1\tau(yz)+a_1\tau(y\vec{v})+\tau(y\vec{v})=a_1\tau(yz)+\tau(y\vec{v})=\psi'(0)\tau(yz)+\tau(y)\vec{v}.
\end{align*}
The conclusion follows. 
\end{proof}

 \begin{lem}\label{lem:replace}
Let $\phi_\tau$ be a twisted rational map of degree at least $2$. Assume that $\zeta\in\sP^1$ is a type II fixed point of $\phi_\tau$ and that $\vec{v}\in T_{\zeta}\sP^1$ is a good direction at $\zeta$ fixed by $T_\zeta\phi_\tau$. If there exists a Berkovich closed $\overline{\aD}\subsetneq\sB(\vec{v})$ such that $\phi_\tau:\sB(\vec{v})\setminus\overline{\aD}\to\sB(\vec{v})$ is injective, then there exists an injective power series $h:\sB(\vec{v})\to\sB(\vec{v})$ such that $h\circ\tau(\zeta)=\phi_\tau(\zeta)$ for all $\zeta\in\sB(\vec{v})$ with $\diam(\zeta)\ge\diam(\overline{\aD})$.
 \end{lem}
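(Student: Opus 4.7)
The plan is to reduce to convenient coordinates and show that the obvious choice $h=\phi|_{\sB(\vec{v})}$ already works. Using Lemma \ref{cor:composition} to conjugate by a suitable element of $\PGL(2,K)$, and then translating by a point of $\tau(\overline{\aD})\cap K$ (which fixes both $\xi_G$ and $\vec{v}$), I would assume $\zeta=\xi_G$, $\vec{v}$ is the direction at $\xi_G$ containing $0$, and $\tau(\overline{\aD})=\overline{D}(0,r')$ with $r'=\diam(\overline{\aD})^{\lambda_\tau}$. Such normalizations preserve the hypotheses of the lemma and the conclusion.

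Next I would argue that $\phi|_{\aD(0,1)}$ is a convergent power series $\sum_{n\ge 0} a_n z^n$ with $|a_n|\le 1$ and $|a_0|<1$. Since $\tau(0)=0$, the induced action $T_{\xi_G}\tau$ fixes $\vec{v}$, and combining this with the identity $T_{\xi_G}\phi_\tau=T_{\xi_G}\phi\circ T_{\xi_G}\tau$ and the hypothesis $T_{\xi_G}\phi_\tau(\vec{v})=\vec{v}$ forces $T_{\xi_G}\phi(\vec{v})=\vec{v}$. The fact that $\tau$ is a bijection of $\sB(\vec{v})$ also passes goodness of $\vec{v}$ from $\phi_\tau$ to $\phi$, so $\phi(\sB(\vec{v}))\subseteq\sB(\vec{v})$ and $\phi$ has no poles in $\aD(0,1)$, yielding the claimed power-series representation.

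Then I would translate the injectivity hypothesis into a coefficient bound. Since $\tau$ is a bijection of $\sB(\vec{v})$, the injectivity of $\phi_\tau=\phi\circ\tau$ on $\sB(\vec{v})\setminus\overline{\aD}$ is equivalent to the injectivity of $\phi$ on the annulus $\aD(0,1)\setminus\overline{D}(0,r')$. Applying Lemma \ref{thm:imgtypeII} (with trivial field automorphism) to arbitrarily thin sub-annuli in $\{r'<|z|<1\}$, injectivity of $\phi$ at each radius $r$ forces the Weierstrass degree of $\phi(z)-a_0$ at $r$ to equal $1$; letting $r\to 1^-$ gives $|a_n|\le|a_1|$ for all $n\ge 2$, and $a_1\ne 0$ follows from $\phi$ being non-constant.

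Finally I would combine these bounds to conclude that $\phi|_{\aD(0,1)}$ is already injective on the whole Berkovich open disk. Factoring $\phi(z)-\phi(w)=(z-w)v(z,w)$ with $v(z,w)=\sum_{n\ge 1}a_n(z^{n-1}+z^{n-2}w+\cdots+w^{n-1})$, the ultrametric estimate $|v(z,w)-a_1|\le\max_{n\ge 2}|a_n|\max(|z|,|w|)^{n-1}\le|a_1|\max(|z|,|w|)<|a_1|$ on $\aD(0,1)\times\aD(0,1)$ shows $|v|=|a_1|\ne 0$ everywhere, so $\phi$ is an analytic isomorphism from $\aD(0,1)$ onto $\aD(a_0,|a_1|)\subseteq\aD(0,1)$. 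Setting $h:=\phi|_{\sB(\vec{v})}$ therefore gives an injective power series $\sB(\vec{v})\to\sB(\vec{v})$, and in fact $h\circ\tau=\phi_\tau$ holds on all of $\sB(\vec{v})$ — strictly stronger than the lemma demands. I expect the main technical hurdle to be the Weierstrass step, which however is handled cleanly by Lemma \ref{thm:imgtypeII}.
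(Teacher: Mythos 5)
Your proof is correct and takes a genuinely different, more self-contained route than the paper. The paper simply invokes Rivera-Letelier's approximation lemma as a black box, whereas you first use the hypotheses together with Proposition \ref{prop:image-whole} to deduce $\phi_\tau(\sB(\vec{v}))=\sB(\vec{v})$, hence that $\phi$ has no poles in $\tau(\sB(\vec{v}))=\sB(\vec{v})$ and is given there by a power series; you then translate injectivity on the outer annulus into the coefficient bounds $a_1\ne 0$ and $|a_n|\le|a_1|$ via the Weierstrass-degree reading of Lemma \ref{thm:imgtypeII}, and finish with the ultrametric factorization estimate. This actually yields the stronger conclusion that $h=\phi|_{\sB(\vec{v})}$ satisfies $h\circ\tau=\phi_\tau$ on all of $\sB(\vec{v})$, not merely on the outer part, so in this situation (a good direction fixed at a fixed type II point, with the disk sent onto itself) Rivera's approximation is not really an approximation at all — the map is already an injective power series. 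The paper's citation is still the cleaner thing to do in a write-up, since Rivera's lemma is the standard tool and applies to settings where the target disk is not preserved, but your argument is a legitimate and instructive alternative.

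Two small points to tighten. First, the claim that ``$a_1\ne 0$ follows from $\phi$ being non-constant'' is not right in isolation: non-constancy only gives \emph{some} $a_n\ne 0$; the conclusion $a_1\ne 0$ comes from the Weierstrass degree being $1$ on the annulus (which, together with non-constancy, forces the minimal exponent to be $1$). Second, the step ``injectivity on a thin annulus forces Weierstrass degree $1$'' deserves a sentence: if the common Weierstrass degree on a thin annulus is $d$, a preimage count (or the fact that the modulus of the image annulus in Lemma \ref{thm:imgtypeII} is $d$ times the modulus of the source, and analytic isomorphisms of annuli preserve modulus) shows $\phi$ is $d$-to-$1$ there, so injectivity gives $d=1$. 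Finally, the normalization sentence is slightly garbled — one translates by the center $c$ of $\overline{\aD}$ (which lies in $\sB(\vec{v})\cap K$ so fixes $\xi_G$ and $\vec{v}$), and then $\tau(\overline{\aD})$ is automatically centered at $0$ because $\tau(0)=0$ — but this does not affect correctness.
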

 \begin{proof}
 The conclusion follows from Rivera's approximation lemma \cite[Section 5]{Rivera03II} (see also \cite[Lemma A.4]{DeMarco16}), applied to the composition of $\phi: \tau(\sB(\vec{v}))\to\sB(\vec{v})$ and an $M\in\PGL(2,K)$ mapping $\sB(\vec{v})$ to $\tau(\sB(\vec{v}))$.
 \end{proof}

Inspired by Propositions \ref{prop:fix-pt} and \ref{prop:periodic-sk}, for a fixed Fatou component $U\subset\sP^1$, we say that $U$ is an \emph{attracting domain} if $U$ contains a classical attracting fixed point, and that $U$ is a \emph{Rivera domain} if $\phi_\tau:U\to U$ is bijective.

\begin{proof}[Proof of Theorem \ref{thm:periodic}]
With the preparatory propositions in this section, Kiwi's proof of \cite[Lemma A.1]{DeMarco16} can be directly transported to the case of twisted rational maps satisfying the hypothesis of Theorem \ref{thm:periodic}. 
\end{proof}

\subsection{Wandering domains}\label{sec:wandering}

To prove Theorem \ref{thm:nonwandering}, we begin with the the following two lemmas, which are similar to the case of rational maps.

 \begin{lem}\label{lem:distance}
 Let $\phi_\tau$ be a twisted rational map with $\deg\phi_\tau\ge 1$ and $\tau\in\Aut^\ast(K)$. Consider a connected Berkovich affinoid  $U\subset\sP^1$ with at least two boundary points. Let $\delta>0$ be the minimum hyperbolic distance between two distinct boundary points of $U$. Then $\rho(\zeta,\xi)\ge\lambda_\tau\delta$ for any two distinct points $\zeta,\xi\in\partial\phi_\tau(U)$.
\end{lem}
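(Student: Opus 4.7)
The plan is to identify a preimage $\zeta\in\partial U$ of one boundary point $\zeta'\in\partial\phi_\tau(U)$, follow the geodesic $[\zeta',\xi']$ through $\phi_\tau$ back into $\overline{U}$ until it next meets $\partial U$ at some $\eta$, and then estimate distances using Proposition~\ref{prop:fixedinterval}. Two elementary observations guide the argument. First, every $\zeta'\in\partial\phi_\tau(U)$ admits at least one preimage in $\partial U$: since $\phi_\tau$ is continuous and open (Lemma~\ref{cor:open}) and $\overline{U}$ is compact, any $\zeta'\in\overline{\phi_\tau(U)}\setminus\phi_\tau(U)$ arises as $\lim_n\phi_\tau(\zeta_n)$ for some sequence $\zeta_n\in U$, and a subsequential limit $\zeta$ of $(\zeta_n)$ lies in $\overline{U}\setminus U=\partial U$ and satisfies $\phi_\tau(\zeta)=\zeta'$. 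Second, in the tree structure of $\sH^1$ the boundary points of any Berkovich affinoid are leaves, so the open geodesic $]\zeta',\xi'[$ between two distinct boundary points of $\phi_\tau(U)$ is entirely contained in $\phi_\tau(U)$.

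Let $\vec v'\in T_{\zeta'}\sP^1$ be the direction at $\zeta'$ pointing toward $\xi'$; by the second observation $\vec v'$ points into $\phi_\tau(U)$. Refining the previous compactness argument by choosing the sequence $\phi_\tau(\zeta_n)$ to converge to $\zeta'$ along $]\zeta',\xi'[$ and passing to a subsequence whose directions at the limit converge gives a preimage $\zeta\in\partial U$ of $\zeta'$ together with a direction $\vec v\in T_\zeta\sP^1$ pointing into $U$ such that $T_\zeta\phi_\tau(\vec v)=\vec v'$. Parametrize $[\zeta',\xi']$ by arclength as $\gamma:[0,L]\to\sH^1$ with $\gamma(0)=\zeta'$, $\gamma(L)=\xi'$, and $L=\rho(\zeta',\xi')$. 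I would then lift $\gamma$ through $\phi_\tau$: starting from $\widetilde\gamma(0)=\zeta$ in the direction $\vec v$, continue piecewise-linearly as long as the lift stays in $\overline{U}$. Since $\xi'\notin\phi_\tau(U)$ the lift cannot remain in $U$ for all $\epsilon\in[0,L]$; letting $\epsilon^*\in(0,L]$ be the first time the lift returns to $\partial U$ and setting $\eta:=\widetilde\gamma(\epsilon^*)$, we obtain $\eta\in\partial U$ with $]\zeta,\eta[\,\subset U$ and $\phi_\tau(\eta)=\gamma(\epsilon^*)\in[\zeta',\xi']$.

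The main obstacle is extending the lift across ramification. Outside $\aR(\phi_\tau)$ the map $\phi_\tau$ is a local tree-isometry up to the factor $\lambda_\tau$ (Proposition~\ref{prop:multi-basic}(1)), so the lift is uniquely determined by continuity; at a ramification point one uses surjectivity of the tangent map to select a preimage direction compatible with the forward direction of $\gamma$. Subdividing $[\zeta,\eta]$ at the ramification points $\zeta=\eta_0,\eta_1,\ldots,\eta_k=\eta$ so that on each $]\eta_i,\eta_{i+1}[$ the map $\phi_\tau$ is injective with constant local degree $d_i\ge 1$, Proposition~\ref{prop:fixedinterval} yields $\rho(\phi_\tau(\eta_i),\phi_\tau(\eta_{i+1}))=\lambda_\tau d_i\,\rho(\eta_i,\eta_{i+1})$.

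Summing and using that the images $\phi_\tau(\eta_i)$ subdivide $[\zeta',\phi_\tau(\eta)]\subseteq[\zeta',\xi']$ monotonically,
\[
\rho(\zeta',\phi_\tau(\eta))=\sum_{i=0}^{k-1}\lambda_\tau d_i\,\rho(\eta_i,\eta_{i+1})\ge\lambda_\tau\rho(\zeta,\eta)\ge\lambda_\tau\delta,
\]
where the last inequality uses that $\zeta,\eta\in\partial U$ are distinct. Finally, since $\phi_\tau(\eta)\in[\zeta',\xi']$, additivity of $\rho$ along this geodesic yields
\[
\rho(\zeta',\xi')=\rho(\zeta',\phi_\tau(\eta))+\rho(\phi_\tau(\eta),\xi')\ge\lambda_\tau\delta,
\]
completing the argument.
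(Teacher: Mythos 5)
Your argument takes a genuinely different route from the paper. The paper's proof is a two-line composition: since $\phi_\tau=\phi\circ\tau$, Lemma~\ref{lem:fund} shows $\tau$ is a $\rho$-similarity with ratio $\lambda_\tau$, so the boundary points of $\tau(U)$ are pairwise at distance $\geq\lambda_\tau\delta$, and then the cited result \cite[Proposition~11.3]{Ben19} (that a nonconstant rational map does not decrease the minimal pairwise boundary distance of a connected affinoid) finishes the job. You instead re-derive the content of that proposition from scratch, adapted to the twisted setting, via a path-lifting argument along $[\zeta',\xi']$. This is a perfectly reasonable alternative: it is more self-contained (and makes explicit why the ``boundary distance'' statement holds even though rational maps can very much \emph{decrease} $\rho$ between arbitrary pairs of points), at the cost of being much longer and requiring care at ramification.

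There is, however, one step that does not hold up as written. When you claim to produce the initial preimage pair $(\zeta,\vec v)$ with $T_\zeta\phi_\tau(\vec v)=\vec v'$ by ``passing to a subsequence whose directions at the limit converge,'' you are appealing to a compactness of the tangent space $T_\zeta\sP^1$ that is not available: at a type~II point this is $\bP^1(\widetilde K)$ with no sequential compactness in any topology that would make the statement meaningful, and a sequence $\vec v_\zeta(\zeta_n)$ need not be eventually constant. The conclusion you want is still true, but it needs a different justification. One clean way: pick $x_n\in]\zeta',\xi'[$ with $x_n\to\zeta'$ and $\widetilde x_n\in U$ with $\phi_\tau(\widetilde x_n)=x_n$. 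For each $n$, lift the segment $[x_n,\zeta']$ through $\phi_\tau$ starting at $\widetilde x_n$; this lift is a segment of $\rho$-length $\le\rho(x_n,\zeta')/\lambda_\tau\to 0$ that starts in $U$ and ends at a preimage $\widehat\zeta_n$ of $\zeta'$. Since $\phi_\tau^{-1}(\zeta')$ is finite, a subsequence has constant $\widehat\zeta_n=\zeta$. Because the lifts start in $U$ and $\zeta\notin U$, the direction at $\zeta$ along which these (shrinking) lifts terminate is one that points into $U$ and is sent to $\vec v'$ by $T_\zeta\phi_\tau$ (by construction of the lift). Alternatively, one can note that $\phi_\tau^{-1}([\zeta',\xi'])$ is a finite metric graph and argue combinatorially. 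A second, much smaller, imprecision: $]\zeta',\xi'[$ need not lie entirely in $\phi_\tau(U)$ if it passes through a third boundary point of $\phi_\tau(U)$; but then that point is closer to $\zeta'$ than $\xi'$ is, so one may simply replace $\xi'$ by it and the estimate only improves. With those repairs the argument goes through and gives the stated inequality; but note the paper's route (factor through $\tau$, cite \cite[Proposition~11.3]{Ben19} for $\phi$) entirely avoids redoing the lifting analysis.
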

\begin{proof}
By Lemma \ref{lem:fund} (2), we have  $\rho(\zeta_1,\xi_1)\ge\lambda_\tau\delta$ for any distinct $\zeta_1,\xi_1\in\tau(U)$. 
Then the conclusion follows from  \cite[Proposition 11.3]{Ben19}.
\end{proof}

\begin{lem}\label{prop:disk}
Let $\phi_\tau$ be a twisted rational map with $\deg\phi_\tau\ge 2$.  Assume  $U\subset\F(\phi_\tau)$ is a wandering domain. Then there exists $N\ge 0$ such that $\phi_\tau^n(U)$ is a Berkovich open disk for all $n\ge N$.
\end{lem}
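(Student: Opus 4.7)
The plan is to argue by contradiction, assuming there are infinitely many $n$ for which $U_n := \phi_\tau^n(U)$ is not a Berkovich open disk. Since a connected open subset of $\sP^1$ is a Berkovich open disk if and only if its topological boundary is a single point, this hypothesis is equivalent to $|\partial U_n| \ge 2$ for infinitely many $n$, and all the extra boundary points lie in $\sH^1$ because $\partial U_n \subset \J(\phi_\tau)$.

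I would first pin down the dynamics along the orbit. Combining continuity and surjectivity of $\phi_\tau$ on the compact space $\sP^1$ with the invariance $\phi_\tau^{-1}(\F(\phi_\tau))=\F(\phi_\tau)$ and $\phi_\tau^{-1}(\J(\phi_\tau))=\J(\phi_\tau)$ from Proposition \ref{prop:fatoujuliabasics}(2), one obtains $\phi_\tau(U_n) = U_{n+1}$ and $\phi_\tau(\partial U_n) = \partial U_{n+1}$. In particular $|\partial U_n|$ is nonincreasing in $n$, so it stabilizes at some value $k \ge 2$. After passing to a tail of the orbit I may assume $|\partial U_n| = k$ and that $\phi_\tau|_{\partial U_n}$ is a bijection for every $n$.

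Iterating Lemma \ref{lem:distance} then yields $\delta_{n+1} \ge \lambda_\tau \delta_n$, where $\delta_n$ is the minimum hyperbolic distance between two distinct points of $\partial U_n$; the same reasoning applied to arbitrary pairs of boundary points shows that the finite subtrees $H_n := \Hull(\partial U_n) \subset \overline{U_n}$ have uniformly bounded combinatorial complexity (at most $k$ leaves) and that all their edge lengths can only be expanded by factors $\ge \lambda_\tau$ at each step. For each $n$ I would pick a distinguished type II point $\zeta_n \in U_n$ in the relative interior of $H_n$ (for instance a Steiner-type point of this finite tree), so that $\rho(\zeta_n, \partial U_n) \ge \delta_n/(2k)$, giving an infinite sequence $\{\zeta_n\}$ of pairwise distinct type II points living in pairwise disjoint Fatou components, each at a definite hyperbolic distance from its surrounding boundary.

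The main obstacle is converting this uniform hyperbolic nondegeneracy into a contradiction with the wandering property, because $(\sH^1,\rho)$ is not compact and the orbit $\{\zeta_n\}$ need not accumulate in $\rho$. The way forward is via the canonical measure $\mu := \mu_{\phi_\tau}$ from Theorem \ref{thm:equ}, a Radon probability measure supported on $\J(\phi_\tau)$ that charges no classical point: the persistent $k$-leaf boundary trees on pairwise disjoint $U_n$ force $k$ distinct branches of $\J(\phi_\tau)$ at uniform hyperbolic size inside every $U_n$, and a finiteness/mass argument against $\mu$ (or, equivalently, against the equidistribution of preimages $d^{-n}(\phi_\tau^n)^\ast\delta_\zeta \to \mu$ for a generic base point) shows that only finitely many Fatou components can carry such nondegenerate tree boundaries, the desired contradiction. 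The delicate case is $0<\lambda_\tau <1$, where the inequality $\delta_{n+1}\ge \lambda_\tau\delta_n$ degenerates into a contraction; there the plan is to first pass to the eventual constant-local-degree regime along the skeleton $\bigcup_n H_n$ and reduce the problem to an argument on this finite metric tree.
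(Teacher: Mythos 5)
Your opening reduction is fine: $\phi_\tau(\partial U_n)=\partial U_{n+1}$, so $|\partial U_n|$ is nonincreasing and may be assumed constant, and Lemma~\ref{lem:distance} gives $\delta_{n+1}\ge\lambda_\tau\delta_n$. But from that point on your argument diverges from the paper's and, more importantly, has real gaps. The paper's proof is a one-line citation: it imports Benedetto's argument for \cite[Theorem 11.2]{Ben19}, which is a direct geometric/combinatorial argument on Fatou components, with \cite[Proposition 11.3]{Ben19} replaced by Lemma~\ref{lem:distance}. No measure-theoretic input is used.

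Your plan to ``convert to a contradiction via the canonical measure $\mu$'' is not substantiated and I do not see how to make it work. First, $\mu$ is only constructed under the hypothesis $d\lambda_\tau>1$ (Theorem~\ref{thm:equ}), which is not among the hypotheses of this lemma; you are silently adding an assumption. Second, even when $\mu$ exists, the fact that $\mu$ is supported on $\J(\phi_\tau)$ and charges no classical point gives no lower bound on the $\mu$-mass carried by the boundaries $\partial U_n$ or by the Julia branches adjacent to the skeletons $H_n$: those masses can shrink to zero along the wandering orbit, so ``only finitely many Fatou components can carry such nondegenerate tree boundaries'' does not follow from any obvious finiteness of $\mu$. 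You would need to say precisely what quantity is bounded below uniformly in $n$ and why it is $\mu$-additive across the disjoint $U_n$; as written this is a hope, not a proof. Third, your treatment of $0<\lambda_\tau<1$ is a placeholder: here $\delta_{n+1}\ge\lambda_\tau\delta_n$ is a contraction, so the very quantity (hyperbolic nondegeneracy of $H_n$) that the rest of your argument relies on may tend to zero, and ``passing to the eventual constant-local-degree regime'' does not by itself restore a uniform lower bound. In short, the proposal identifies the right reduction and the right role for Lemma~\ref{lem:distance}, but replaces the actual finiteness mechanism of \cite[Theorem 11.2]{Ben19} by an undeveloped measure-theoretic heuristic that requires extra hypotheses and is not correct as stated.
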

\begin{proof}
The proof of \cite[Theorem 11.2]{Ben19} can be applied here by Lemma \ref{lem:distance}.
\end{proof}

\begin{proof}[Proof of Theorem \ref{thm:nonwandering}]
Let $U\subset\sP^1$ be a wandering domain of $\phi_\tau$ containing a $\overline{k}$-rational point $b$. Extending $k$ to $k(b)$ and conjugating by an element $\PGL(2,k(b))$, we may assume that $b\in k$ and that the wandering domain $U$ of $\phi_\tau$ contains $\sP^1\setminus\overline{\aD}(0,1)$. Up to replace $k$ with a finite extension, we can assume that $k$ contains all the poles and critical points of $\phi_\tau$. 
By the existence of $L$, we can further assume that $\tau(k)\subseteq k$. 
Applying the argument of \cite[Theorem 11.23]{Ben19} for $\phi_\tau$, we conclude that $\partial U=\{\zeta\}$ for some type II (pre)periodic point $\zeta\in\sH^1$ and hence $U$ is contained in the basin of type II Julia cycle that is contained in the forward orbit of $\zeta$.
\end{proof}

\bibliographystyle{siam}
\bibliography{references}

\end{document}